\documentclass[a4paper,12pt]{amsart}
\usepackage{xcolor}

\title[Existentially closed pmp actions of free groups]{Existentially closed measure-preserving actions of free groups}

\author[A. Berenstein]{Alexander Berenstein}
\address{Alexander Berenstein\\
Universidad de los Andes,
Cra 1 No 18A-12, Bogot\'{a}, Colombia.}
\urladdr{https://pentagono.uniandes.edu.co/~aberenst/}

\author[C. W. Henson]{C. Ward Henson}
\address{C. Ward Henson
\\ University of Illinois, Urbana-Champaign \\ 
1409 West   Green Street \\Urbana, IL 61801 \\USA.}
\urladdr{https://math.illinois.edu/directory/profile/cwhenson}

\author[T. Ibarluc\'ia]{Tom\'as Ibarluc\'ia}
\address{Tom\'as Ibarluc\'ia
\\ Universit\'e Paris Cit\'e \\
  CNRS \\
  IMJ-PRG \\
  F-75006 Paris \\
  France.}
\urladdr{https://webusers.imj-prg.fr/~tomas.ibarlucia/}

\thanks{Research partially supported by ECOS-Nord project 048-2019 \emph{Teor\'ia de modelos y grupos polacos} (first author), Simons Foundation collaboration grants 202251 and 422088 (second author), and ANR contract AGRUME (ANR-17-CE40-0026) (third author).}

\swapnumbers

\usepackage{amsmath, amssymb, amsthm, amscd, mathtools, enumitem}

\usepackage[T1]{fontenc}
\usepackage{lmodern}

\usepackage{setspace}
\onehalfspacing

\usepackage{geometry}
\geometry{
  body={6.5in, 9.2in},
  left=1in, right=1in,
  top=1.2in
}

\usepackage{stmaryrd}

\makeatletter
\g@addto@macro\bfseries{\boldmath}
\makeatother

\usepackage[hidelinks]{hyperref}

\def\Ind#1#2{#1\setbox0=\hbox{$#1x$}\kern\wd0\hbox to 0pt{\hss$#1\mid$\hss}
\lower.9\ht0\hbox to 0pt{\hss$#1\smile$\hss}\kern\wd0}
\def\ind{\mathop{\mathpalette\Ind{}}}
\def\Notind#1#2{#1\setbox0=\hbox{$#1x$}\kern\wd0\hbox to 0pt{\mathchardef
\nn="3236\hss$#1\nn$\kern1.4\wd0\hss}\hbox to 0pt{\hss$#1\mid$\hss}\lower.9\ht0
\hbox to 0pt{\hss$#1\smile$\hss}\kern\wd0}

\usepackage{graphicx}
\makeatletter
\newcommand{\bigperp}{%
  \mathop{\mathpalette\bigp@rp\relax}%
  \displaylimits
}
\newcommand{\bigp@rp}[2]{%
  \vcenter{
    \m@th\hbox{\scalebox{\ifx#1\displaystyle1.7\else1.4\fi}{$#1\perp$}}
  }%
}
\makeatother

\def\mbN{\mathbb{N}}
\def\mbZ{\mathbb{Z}}
\def\mbQ{\mathbb{Q}}
\def\mbF{\mathbb{F}}
\def\mbR{\mathbb{R}}

\def\mbI{\mathbb{I}}

\def\mcE{\mathcal{E}}
\def\mcL{\mathcal{L}}
\def\mcU{\mathcal{U}}

\def\mcX{\mathcal{X}}
\def\mcY{\mathcal{Y}}
\def\mcZ{\mathcal{Z}}

\def\MBAQ{\mathcal{MBA}_\mbQ}

\def\APA{{\rm APA}}
\def\ARV{{\rm ARV}}
\def\PMPk{{\rm PMP$_k$}}
\def\PMPFk*{{\rm PMP$_k^*$}}
\def\PMPF1*{{\rm PMP$_1^*$}}
\def\mathPMPFk*{{\rm PMP}_k^*}
\def\wFk{\widehat{\mbF}_k}

\DeclareMathOperator{\Aut}{Aut}
\DeclareMathOperator{\Sym}{Sym}

\DeclareMathOperator{\Clop}{Clop}
\DeclareMathOperator{\acl}{acl}

\DeclareMathOperator{\tp}{tp}
\DeclareMathOperator{\stp}{stp}
\DeclareMathOperator{\Th}{Th}

\newcommand{\meq}{\mathrm{meq}}
\newcommand{\inv}{^{-1}}
\newcommand{\ov}[1]{\overline{#1}}

\newcommand{\eqstp}{\stackrel{\boldsymbol{.}}{\equiv}}
\newcommand{\actson}{\curvearrowright}

\theoremstyle{plain}        \newtheorem{fact}{Fact}[section]
\theoremstyle{plain}        \newtheorem{theorem}[fact]{Theorem}
\theoremstyle{plain}        \newtheorem*{theorem*}{Theorem}
\theoremstyle{plain}        \newtheorem{lem}[fact]{Lemma}
\theoremstyle{plain}        \newtheorem{prop}[fact]{Proposition}
\theoremstyle{plain}        \newtheorem{cor}[fact]{Corollary}
\theoremstyle{definition}   \newtheorem{rem}[fact]{Remark} 
\theoremstyle{definition}   \newtheorem{defin}[fact]{Definition}
\theoremstyle{definition}   
\theoremstyle{definition}   \newtheorem{assumption}[fact]{Assumption}
\theoremstyle{definition}   
\theoremstyle{definition}   
\theoremstyle{definition}   \newtheorem{question}[fact]{Question}
\theoremstyle{definition}   \newtheorem*{question*}{Question}
\theoremstyle{definition}   \newtheorem{example}[fact]{Example}
\theoremstyle{definition}   \newtheorem{examples}[fact]{Examples}

\begin{document}

\begin{abstract}
This paper is motivated by the study of probability measure-preserving (\emph{pmp}) actions of free groups using continuous model theory.  Such an action is treated as a metric structure that consists of the measure algebra of the probability measure space expanded by a family of its automorphisms. We prove that the existentially closed pmp actions of a given free group form an elementary class, and therefore the theory of pmp $\mbF_k$-actions has a model companion.  We show this model companion is stable and has quantifier elimination. We also prove that the action of $\mbF_k$ on its profinite completion with the Haar measure is metrically generic and therefore, as we show, it is existentially closed.

We deduce our main result from a more general theorem, which gives a set of sufficient conditions for the existence of a model companion for the theory of $\mbF_k$-actions on a separably categorical, stable metric structure.
\end{abstract}

\maketitle
\setcounter{tocdepth}{1}
\tableofcontents

\section{Introduction}

One of the key notions that model theory abstracted from the theory of fields and applied successfully in many other algebraic settings is that of \emph{existential closedness}. Existentially closed structures generalize algebraically closed fields, and in appropriate circumstances---for example, when the base theory is inductive and Robinson's \emph{model companion} exists---this provides a model-theoretic concept of \emph{generic} structure, complementary to other mathematical notions of genericity.

Continuous logic makes it possible to consider these ideas in more analytic contexts, and in particular in ergodic theory and measured group theory. Consider a factor $\mcX$ of a probability measure-preserving (\emph{pmp}) system $\mcY$ of some countable group $\Gamma$; we think of pmp systems in terms of their associated measure algebras, so we see a factor as an inclusion $\mcX\subseteq \mcY$.  Then we say $\mcX$ is \emph{existentially closed in $\mcY$} if for every $\epsilon>0$, every finitely many measurable sets $\{a_i\}_{i\in I}$ in $\mcX$ and $\{b_j\}_{j\in J}$ in $\mcY$, and every finitely many group elements $\{\gamma_l\}_{l\in L}$ from $\Gamma$, we can find measurable sets $\{c_j\}_{j\in J}$ in $\mcX$ such that
\[|\mu(a_i\cap b_j\cap \gamma_l b_k) - \mu(a_i\cap c_j\cap \gamma_l c_k)|<\epsilon\]
for every $i\in I$, $j,k\in J$, and $l\in L$. This is a strengthening of saying that $\mcY$ is \emph{weakly contained} in its factor 
$\mcX$, in the sense of Kechris \cite{kechrisGlobal} (which, as is well known, also has a natural model-theoretic phrasing, cf.\ \cite[Lemma~2.6]{ibatsaStrong}). The reader may note that $\mcX$ is existentially closed in $\mcY$ if and only if there is an ultrapower $\mcX^\mcU$ of $\mcX$ such that $\mcX\subseteq \mcY\subseteq \mcX^\mcU$, where the composition of the inclusions is the canonical diagonal inclusion into the ultrapower. We say then that a pmp $\Gamma$-system $\mcX$ is \emph{existentially closed (e.c.)} if for every extension $\mcX\subseteq\mcY$, we have that $\mcX$ is existentially closed in 
$\mcY$.

These definitions do not bring anything new into classical ergodic theory, that is, to the study of $\mbZ$-systems, since every aperiodic $\mbZ$-system is existentially closed, and conversely \cite{bbhu08}. In fact, for amenable groups generally, the model theory of pmp systems is completely coded by---and codes---the IRSs (= invariant random subgroups) of the systems; for instance, if $\mcX\subseteq\mcY$ are atomless $\Gamma$-systems and $\Gamma$ is amenable, then $\mcX$ is existentially closed in $\mcY$ iff $\mcX$ and $\mcY$ have the same IRS: see Giraud's work \cite{girHyperfinite}. In particular, the e.c.\ pmp systems of an amenable group are given precisely by its free actions on atomless spaces. See some further comments and references at the end of Subsection \ref{ss:meas-alg} below.

For non-amenable groups, on the other hand, the properties defined above are largely unexplored. The importance acquired by the notion of weak containment in recent years, however, is an indication that the study of existentially closed factors and systems of non-amenable groups should be worthwhile and fertile.\footnote{In a different but not unrelated vein, a concrete demonstration of the usefulness of the model-theoretic viewpoint in measured group theory was given in \cite{ibatsaStrong}, where the notion of existential algebraic closure shed new light on certain rigidity phenomena of strongly ergodic systems.}

In this paper, we make a first step in this direction by showing that the class of existentially closed pmp systems of a given non-abelian free group $\mbF_k$ is axiomatizable in continuous logic and is well-behaved. In model-theoretic terminology:

\begin{theorem}[See Theorem \ref{thm:main-example}]
The theory of pmp $\mbF_k$-systems has a model completion, which has quantifier elimination and is stable.

Moreover, in the model completion, the algebraic closure of a set $A$ equals the subsystem generated by $A$, and the stable independence relation is given by the usual probabilistic independence relation between the generated subsystems.
\end{theorem}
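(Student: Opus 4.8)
The plan is to deduce the statement from the general theorem announced above by verifying its hypotheses for the base structure $\mcA$, the measure algebra of an atomless probability space. The classical model theory of probability algebras supplies exactly what is needed: the theory \APA{} of atomless probability algebras is separably categorical, eliminates quantifiers, and is stable, with nonforking given by probabilistic independence, with every type stationary over any algebraically closed set, and with $\acl(A)=\dcl(A)$ equal to the measure subalgebra $\langle A\rangle$ generated by $A$. So the structural hypotheses of the general theorem hold for $\mcA$, and once that theorem is in place its conclusions transfer directly: the theory of pmp $\mbF_k$-systems has a model companion \PMPFk*, which has quantifier elimination, is stable, and in which $\acl$ of a set $A$ is the $\mbF_k$-invariant subalgebra generated by $A$ while forking independence is probabilistic independence of the generated subsystems. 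That \PMPFk* is in fact a model \emph{completion} is then automatic from the amalgamation property established below.

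For the general theorem itself I would proceed in three stages. First, the theory of pmp $\mbF_k$-systems is inductive: the class of metric structures consisting of a probability algebra equipped with $k$ distinguished measure-preserving automorphisms is closed under unions of increasing chains (pass to the metric completion of the union), hence $\forall\exists$-axiomatizable, so any model companion it admits is a model completion once amalgamation is available. Second, I would prove amalgamation, in the strong form needed. Given pmp $\mbF_k$-systems $\mcX_0\subseteq\mcX_1$ and $\mcX_0\subseteq\mcX_2$, form the relatively independent joining $\mcX_1\otimes_{\mcX_0}\mcX_2$; because $\mbF_k$ is free, each of its $k$ free generators acts diagonally on this joining with no relations to verify, so the joining is again a pmp $\mbF_k$-system extending both $\mcX_i$, and in it $\mcX_1$ and $\mcX_2$ are probabilistically independent over $\mcX_0$. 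Performed over the invariant subalgebra generated by a set, this strong amalgamation is what will pin down $\acl$, and it already shows that the forking independence of the model companion must be probabilistic independence of generated subsystems.

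Third --- and this is the crux --- I would show that the existentially closed pmp $\mbF_k$-systems form an elementary class by writing down an explicit continuous axiom scheme for it. A system $\mcX$ should be existentially closed exactly when it is \emph{rich}: for every finite tuple $\bar a$ from $\mcX$, every quantifier-free type $p(\bar x)$ over $\bar a$ consistent with the theory of pmp $\mbF_k$-systems, and every $\epsilon>0$, there is a tuple $\bar b$ in $\mcX$ whose quantifier-free type over $\bar a$ is within $\epsilon$ of $p$. Turning richness into a first-order scheme is where the separable categoricity and stability of $\mcA$ enter: for these tame base theories the space of quantifier-free types over a finite configuration that are consistent with the theory of pmp $\mbF_k$-systems is compact and depends continuously on the configuration, so that richness becomes a continuous property, expressible by sentences; and a quantifier-free type realized in a rich model determines a complete type, which --- using that \APA{} has quantifier elimination and that types over the generated subsystems are stationary --- yields quantifier elimination for the resulting theory \PMPFk*. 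Completeness of \PMPFk* then follows from quantifier elimination together with the joint embeddability of $\mbF_k$-systems over the trivial subsystem; and stability follows by counting types over a separable model, which via quantifier elimination reduce to base-structure types over the still-separable invariant subalgebra generated by the parameters.

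The step I expect to be the main obstacle is making rigorous that richness is expressible by continuous sentences --- that is, that the family of quantifier-free types over a given configuration that are consistent with the theory of pmp $\mbF_k$-systems is a \emph{closed}, uniformly approximable condition, so that the richness scheme genuinely axiomatizes an elementary class. This calls for a compactness argument resting on two ingredients: the freeness of $\mbF_k$, which makes consistency essentially unobstructed, since any finite amount of compatible data propagates, through iterated relatively independent joinings, to a genuine pmp $\mbF_k$-system realizing it; and the total boundedness supplied by separable categoricity, which furnishes the uniform bounds that convert ``$\epsilon$-realizable in some extension of $\mcX$'' into a continuous condition on $\mcX$ alone. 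A secondary point requiring care is the identification of $\acl$ in \PMPFk* with the invariant subalgebra generated by a set: amalgamating two copies of an extension independently over that subalgebra and using that probabilistic independence over a measure subalgebra forces trivial intersection, one sees that no new algebraic elements appear, after which the description of forking independence falls out of the general theory of stable independence built freely over the base independence of $\mcA$.
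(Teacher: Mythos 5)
Your top-level reduction (verify the hypotheses of the general theorem for atomless probability algebras; get amalgamation of systems from relatively independent joinings; deduce quantifier elimination, stability, and the descriptions of $\acl$ and of independence from quantifier elimination) is the same as the paper's. The gap is in your third stage, which you yourself flag as the crux: the claim that ``richness'' is axiomatizable because the space of quantifier-free types over a finite configuration consistent with the theory of pmp $\mathbb{F}_k$-systems ``is compact and depends continuously on the configuration.'' Nothing in your sketch justifies this, and it is exactly the hard point. Whether a quantifier-free $\mathcal{L}_k$-type over $\bar a$ can be realized in some extension of $\mathcal{X}$ depends (after amalgamating over the generated subsystem) on the entire $\mathbb{F}_k$-subsystem $\langle\bar a\rangle$, i.e.\ on the whole infinite orbit of $\bar a$ under the group, not on any finite, uniformly continuous amount of data about $\bar a$; so the richness condition is not, on its face, a continuous condition on finitely many variables. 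Your compactness heuristic (``freeness makes consistency unobstructed, finite compatible data propagates through iterated joinings'') cannot be right at that level of generality: for the random graph, which is $\aleph_0$-categorical and whose finite partial automorphisms amalgamate, the theory with $k$ generic automorphisms has \emph{no} model companion, and in the metric setting the analogous failure threatens for unstable bases (e.g.\ the Urysohn sphere). So $\aleph_0$-categoricity plus amalgamation plus a compactness argument does not suffice; some stability-theoretic input must actually be used, and your sketch never uses it.

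Concretely, the paper closes this gap with three ingredients that are absent from your proposal. First, \emph{superstability} is what replaces the infinite orbit $\langle\bar a\rangle$ by a finite piece: given $b$ in an extension and $\epsilon>0$, one finds a finite $\Delta\subseteq\mathbb{F}_k$ with $b$ $\epsilon$-independent from the whole orbit over $\Delta a$, and only then does the problem become one about finite tuples. Second, the resulting finite-tuple condition is the scheme of Definition~\ref{d:class of ec models}: approximate agreement of strong types $b_1\eqstp^{\epsilon}_{\tau_1(a)}\tau_1(b_0)$, etc., together with approximate nonforking conditions, implies approximate realizability of $b_0b_1b_2$ as $c\,\tau_1(c)\,\tau_2(c)$; stationarity of strong types is what lets one glue the independent pieces into an extension of the automorphisms. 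Third, to turn this scheme into continuous sentences one needs the predicates $D_{O,k}$ (distance between strong types) and $I_{O,k,l}$ (distance to the nonforking extension) to be \emph{definable}, which in the metric setting is a genuine constraint: it requires exactness of the $\aleph_0$-categorical base and continuous symmetry of independence (Lemma~\ref{l:IOkl-is-def}), verified for measure algebras via the equi-homogeneity argument with the partition metric $d_P$; it fails, e.g., for Hilbert spaces. Without these three steps your ``richness scheme'' is a restatement of existential closedness, not an axiomatization of it, so the general theorem --- and hence the statement --- is not proved by your argument.
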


Of independent interest is the following characterization of the e.c.\ pmp $\mbF_k$-systems, on which the continuous logic axiomatization relies, and which stems from ideas native to the study of Robinson generic automorphisms of fields, as we explain below. For clarity we set $k=2$, that is, we consider probability spaces $(X,\mu)$ endowed with two automorphisms $\tau_1,\tau_2$. If $x,y,x',y'$ are finite random variables on $X$ (i.e., with finite range), we write $xy\equiv x'y'$ to say that $xy$ and $x'y'$ have the same joint distribution, and we write $x\ind_{x'}yy'$ to say that $x$ and the joint random variable $yy'$ are conditionally independent given $x'$. We also write $\tau_i(x)$ for the random variable $x\circ\tau_i\inv$.
 
We state this characterization next in an informal way, expressed using approximate versions of the standard concepts from probability that are defined above.

\begin{theorem}[See Definition \ref{d:class of ec models} and Section \ref{s:particular-cases}] A pmp system $\mcX=(X,\mu,\tau_1,\tau_2)$ is existentially closed iff it is atomless and the following condition holds: whenever $x,y_0,y_1,y_2$ are finite random variables on $X$ such that, \emph{up to a given small error},
$$xy_0\equiv \tau_1(x)y_1 \equiv\tau_2(x)y_2,\ y_0\ind_{x}\tau_1(x)\tau_2(x),\ y_1\ind_{\tau_1(x)}x\tau_2(x),\text{ and }y_2\ind_{\tau_2(x)}x\tau_1(x),$$
we can find a finite random variable $z$ on $X$ such that, \emph{up to a corresponding small error},
$$xy_0y_1y_2\equiv xz\tau_1(z)\tau_2(z).$$
\end{theorem}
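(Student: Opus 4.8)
The plan is to reduce existential closedness to a finitary extension property along the Cayley tree $T$ of $\mbF_2=\langle\tau_1,\tau_2\rangle$, a $4$-regular tree in which I fix a root $e$. Recall from the introduction that $\mcX$ is e.c.\ exactly when every extension $\mcX\subseteq\mcY$ of pmp $\mbF_2$-systems fits into $\mcX\subseteq\mcY\subseteq\mcX^\mcU$, equivalently when for every finite $w\in\mcY$ and finite tuple of parameters from $\mcX$ the quantifier-free type of $w$ over those parameters is finitely satisfiable in $\mcX$; I also use that the class of e.c.\ systems is elementary (Theorem~\ref{thm:main-example}), so that it suffices to treat the exact forms of the conditions, the $\epsilon$--$\delta$ versions following by a routine ultrapower argument. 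The \emph{atomless} requirement is forced: if $\mcX$ had an atom $a$, then in the extension $\mcX\otimes\mcZ$ by an atomless system $\mcZ$ with trivial $\mbF_2$-action the set $a$ splits, whereas $\mu(a\cap c)\in\{0,\mu(a)\}$ for every $c\in\mcX$, so the splitting cannot be approximated inside $\mcX$; conversely atomlessness will be used only through the fact that over a finite sub-$\sigma$-algebra of an atomless system one can realize any prescribed conditional distribution of a new finite random variable.

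For the forward direction, assume $\mcX$ is e.c.\ and $x,y_0,y_1,y_2\in\mcX$ satisfy the three distributional equalities and the three conditional independences exactly. The plan is to build a \emph{single} extension $\mcX\subseteq\mcY$ carrying a $z$ with $xz\tau_1(z)\tau_2(z)\equiv xy_0y_1y_2$ and then invoke existential closedness of $\mcX$ in $\mcY$. I construct $\mcY$ as a Markov random field over $\mcX$ along $T$: at each vertex one decrees the conditional law of the new value given the $\mcX$-data to be that of $y_0$ given $x$; the equalities $xy_0\equiv\tau_1(x)y_1\equiv\tau_2(x)y_2$ are precisely what makes this independent of whether the vertex is regarded as a root, a $\tau_1$-child, or a $\tau_2$-child, hence globally coherent, and since $T$ is a tree these local specifications glue into an $\mbF_2$-invariant measure. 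That the result is a genuine extension of $\mcX$, rather than an entanglement of it, is exactly what the independences $y_0\ind_x\tau_1(x)\tau_2(x)$, $y_1\ind_{\tau_1(x)}x\tau_2(x)$, $y_2\ind_{\tau_2(x)}x\tau_1(x)$ ensure, each new value being conditionally independent of the $\mcX$-values at the adjacent vertices. Taking $z$ to be the field value at the root gives $xz\tau_1(z)\tau_2(z)\equiv xy_0y_1y_2$ in $\mcY$, and e.c.\ supplies the desired $z'\in\mcX$.

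For the converse, assume $\mcX$ is atomless and satisfies the condition. By the reduction above it suffices to show, for every extension $\mcX\subseteq\mcY$, finite $w\in\mcY$ and finite parameter $x\in\mcX$ (into which the relevant parameters are coded), that for every finite subtree $W\ni e$ of $T$ the joint law $\nu_W$ of $\big((gx)_{g\in W},(gw)_{g\in W}\big)$ is realized by some finite random variable of $\mcX^\mcU$. Given $W$, first extend $\nu_W$ to an $\mbF_2$-invariant joint law $\tilde\nu$ of the full orbit of $x$ together with a finite-valued field on all of $T$ which agrees with the true law on the orbit of $x$, restricts to $\nu_W$ on $W$, and is Markov along $T$ with specifications that are conditionally $x$-local outside $W$ --- possible because $T$ is a tree. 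Then realize $\tilde\nu$ inside $\mcX^\mcU$ by constructing the field shell by shell in $T$: the finitely many shells meeting $W$ are produced by hand using atomlessness, and each subsequent shell --- adjoining to a frontier vertex its two positive children $\tau_1,\tau_2$, the remaining directions coming from reindexing by automorphisms of $\mbF_2$ --- is produced by an application of the stated condition, whose distributional hypotheses hold by invariance of $\tilde\nu$ and whose independence hypotheses hold because $\tilde\nu$ is $x$-local from that shell outward. The field value at the root then realizes $\nu_W$.

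The crux is the converse, and within it the bookkeeping of the independence hypotheses: the condition only permits extending the field outward along $T$ when the adjoined values are conditionally $x$-local, so one must split the target $\tilde\nu$ into a bounded, genuinely $\mcY$-entangled core, realized directly in the atomless structure, and a Markov, $x$-local exterior, realized by iterating the condition --- and must check that such a $\tilde\nu$ exists with all the required compatibility. The forward direction is comparatively soft: its content is the measure-theoretic construction of the Markov random field over the base $\mcX$ on $T$ and the verification that it is a faithful extension, for which the distributional and independence hypotheses are exactly what is needed.
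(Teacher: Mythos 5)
Both directions of your proposal have genuine gaps, and in each case what is missing is precisely the central difficulty of the statement.

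In the forward direction, your Markov-field extension $\mcY$ makes the new value at each vertex of the tree conditionally independent of everything else given the base datum at that vertex, so in $\mcY$ the triple $z,\tau_1(z),\tau_2(z)$ is conditionally independent over $x\tau_1(x)\tau_2(x)$. But the hypotheses do \emph{not} require $y_0,y_1,y_2$ to be conditionally independent of each other --- each $y_i$ is only required to be independent of the other two translates of $x$ --- while the conclusion must reproduce the full joint law of $xy_0y_1y_2$, correlations included. Take $x$ trivial and $y_0=y_1=y_2$ a set of measure $1/2$: the hypotheses hold, the conclusion demands an approximately $\mbF_2$-invariant set of measure about $1/2$ (which e.c.\ systems do have), and your $\mcY$ contains no witness for this, since the root value and its translates are independent there; so invoking existential closedness of $\mcX$ in $\mcY$ yields nothing. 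The paper's proof (Proposition~\ref{p:e.c.-models-are-in-E}) instead realizes the whole \emph{correlated} triple $b_0'b_1'b_2'$ relatively independently from $M$ over $a\tau_1(a)\tau_2(a)$ in a saturated extension and uses stationarity of nonforking to extend the automorphisms so that $\tau_1'(c_0)=c_1$, $\tau_2'(c_0)=c_2$; the independence hypotheses \eqref{axiom:hyp} are what make that extension of the automorphisms possible, not a constraint on the witness's law. Note also that invoking Theorem~\ref{thm:main-example} to pass between approximate and exact forms is circular: in the paper that theorem is deduced from the present characterization.

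In the converse direction, the shell-by-shell scheme conflates realizing a joint law by arbitrary new elements with realizing it by the translates of a \emph{single} element. The object you must produce is generated by one element: the value at a vertex $g$ is literally $g$ applied to the root value, so it cannot be built shell by shell, and the ``core realized by hand using atomlessness'' step is unavailable --- atomlessness produces elements with a prescribed joint law, not elements that are translates of one another under the given $\tau_1,\tau_2$. Likewise each application of the stated condition produces a brand-new $z$ with its genuine translates; it does not refine a previously chosen value, and it controls laws only approximately, so the iteration neither coheres nor has evident error control. The missing ideas are those of Lemma~\ref{l:exist-closedness} and Proposition~\ref{p:models-in-E-are-e.c.}: it suffices to match, in a \emph{single} application, the pattern $a\tilde b\,\tau_1'(\tilde b)\,\tau_2'(\tilde b)$ with $\tilde b=\Delta_n b$ the tuple of all translates of $b$ of word length at most $n$ in $\mcY$, after which the approximate coherence of the witness tuple forces its components to lie within $\epsilon/2$ of the translates $w c_\emptyset$ of a single element of $\mcX$; and the hypotheses \eqref{axiom:hyp} are made verifiable over a finite base by the superstability step, which supplies a finite $\tilde a=\Delta a$ with $b$ approximately independent from the whole orbit of $a$ over $\tilde a$. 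Your outline contains no substitute for either step.
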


As we will show, the scheme above works for characterizing e.c.\ $\mbF_2$-actions in much greater generality, namely, for actions on models of any given $\aleph_0$-categorical, exact, superstable metric theory---of which atomless probability algebras are an example\footnote{By \emph{$\mbF_k$-action} we will usually mean a metric structure expanded with $k$ automorphisms. In the case of probability measure algebras, we may as above also use the term \emph{pmp $\mbF_k$-system}, especially when the action is induced by a concrete measure-preserving action $\mbF_k\actson (X,\mu)$ on a probability space.}. The scheme is not original; in this form it is due to Anand Pillay, up to the error terms that we have added to make it work in the metric setting. It is based on model-theoretic ideas that date back to the works of Lascar \cite{lasBeaux} and Chatzidakis--Hrushovski \cite{chahru} on structures (especially, fields) endowed with a Robinson generic automorphism.\footnote{See also Chatzidakis--Pillay \cite{chapil} and Kikyo--Pillay \cite{kikpil}; the latter pointed out that these ideas can be generalized from one to several automorphisms.} The reader may compare this scheme with the axioms of ACFA \cite[\textsection 1.1]{chahru} (cf.\ also the simplified version in \cite[\textsection 1.3]{chatziSurvey}), and with the generalizations for strongly minimal sets given by Kikyo--Pillay \cite[Fact~1.5, Lem.~2.1]{kikpil}. The first named author of the present paper learned this particular form in private communication from Pillay in the early 2000s, for the axiomatization of existentially closed 
$\mbF_2$-actions on classical (i.e., discrete) $\aleph_0$-categorical, superstable structures. This result, however, seems to have never been published,\footnote{The seminal work of Lascar addresses the case of classical $\aleph_0$-categorical, superstable structures with a single automorphism: cf.\ \cite[Exemple~4]{lasBeaux}.} so we remedy this and extend it at the same time to the metric setting.

Going from the preceding characterization of the e.c.\ models to their axiomatization in continuous logic requires some work---whereas it is obvious in the classical first-order ($\aleph_0$-categorical) case. The naive way of proceeding, which is the one we follow, requires a strong form of definability of the stable independence relation (and of equality of strong types over finite sets), which does not hold in every 
$\aleph_0$-categorical, superstable metric theory. We analyze these technical issues in 
Section~\ref{s:def-of-indep}. We thus identify a set of general assumptions on the base theory that allow us to formalize the previous scheme in continuous logic, and with them we prove our main general theorem in Section~\ref{s:main}. As we state next and prove later in Section~\ref{s:particular-cases}, a convenient sufficient condition to satisfy these assumptions is a strong form of homogeneity that we introduce in Definition \ref{D:equi-homogeneous}.

\begin{theorem}[See Theorem \ref{thm:main}  and Lemma \ref{l:from-homog-to-cont-sym}]
Let $T$ be a model-complete, $\aleph_0$-categorical, superstable metric theory whose separable model is \emph{equi-homogeneous}. Then the theory of $\mbF_k$-actions on models of $T$ has a model completion.
\end{theorem}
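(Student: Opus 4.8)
The quickest route to this statement is the combination: $T_{\mbF_k}$ has a model completion by the general Theorem~\ref{thm:main}, and $T$ satisfies the abstract hypotheses of that theorem by Lemma~\ref{l:from-homog-to-cont-sym}. So the substance lies in proving those two results, which I sketch now. For Theorem~\ref{thm:main}, the plan is to isolate, following Definition~\ref{d:class of ec models}, the class $\mathcal C$ of $\mbF_k$-actions $(M,\tau_1,\dots,\tau_k)$ with $M\models T$ satisfying the metric form of Pillay's amalgamation scheme stated informally above (for $k=2$: whenever finite tuples $x,y_0,y_1,y_2$ from $M$ approximately satisfy the system of type and conditional-independence conditions of that scheme, there is $z\in M$ with $xy_0y_1y_2$ approximately equidistributed with $xz\tau_1(z)\tau_2(z)$), and then to prove two things: (i) $\mathcal C$ is exactly the class of existentially closed models of $T_{\mbF_k}$; and (ii) $\mathcal C$ is elementary. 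Granting (i) and (ii), and using that $T_{\mbF_k}$ is inductive (so e.c.\ models exist above every model by the usual chain construction), $\Th(\mathcal C)$ is the model companion of $T_{\mbF_k}$; that it is in fact a model completion with quantifier elimination follows from the stability of $T$ together with the definability ingredients (equivalently, from amalgamation for $T_{\mbF_k}$: amalgamate the underlying $T$-models by nonforking and glue the automorphisms—here freeness of $\mbF_k$ is what lets one glue without checking any relation among the generators), and the model completion is itself stable.

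The heart of (i) is the sufficiency direction, that every member of $\mathcal C$ is e.c. Superstability is used to keep everything finitary (every type does not fork over some finite subset of its domain), and the symmetry and stationarity of nonforking do the combinatorial work: given $(M,\bar\tau)\in\mathcal C$ inside an extension $(N,\bar\sigma)$ and a finite configuration in $N$ to be approximately copied into $M$, one passes to $M^{\meq}$, replaces the configuration by canonical bases and nonforking amalgams over a finite subset of $M$, observes that the resulting data automatically satisfy the hypotheses of the scheme inside $(N,\bar\sigma)$—precisely because the $\sigma_i$ are automorphisms—and invokes the scheme to produce the required $z\in M$. The converse, that every e.c.\ model lies in $\mathcal C$, is easier: given $(M,\bar\tau)$ e.c.\ and a configuration satisfying the hypotheses of the scheme, one builds an extension $(N,\bar\sigma)\supseteq(M,\bar\tau)$ by nonforking amalgamation of the prescribed types followed by extension of each $\tau_i$ to an automorphism (freeness of $\mbF_k$ again), and then e.c.-ness pulls the approximate solution back into $M$.

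Step (ii), the elementarity of $\mathcal C$, is the main obstacle and the point where the abstract hypotheses of Theorem~\ref{thm:main} enter. The clauses of the scheme—$\bar a$ and $\bar b$ having the same strong type, and $\bar b\ind_{\bar a}\bar c$—must be re-expressed uniformly in the parameters, up to controlled error, by continuous formulas, and the existential quantifier of the scheme must be reduced (via compactness and $\aleph_0$-categoricity) to a closed condition, so that $\mathcal C$ is carved out by an $\forall\exists$-axiom scheme. This requires a strong, uniform form of definability of nonforking over algebraic closures of finite tuples together with definability of equality of strong types over finite sets, and—as the paper stresses—neither of these holds in an arbitrary $\aleph_0$-categorical superstable metric theory. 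This is exactly what Lemma~\ref{l:from-homog-to-cont-sym} provides: equi-homogeneity (Definition~\ref{D:equi-homogeneous}) furnishes, for the automorphisms witnessing $\bar a\equiv\bar b$, a modulus of continuity depending only on the length of the tuples, which lets one transport canonical bases and nonforking amalgams between $\acl(\bar a)$ and $\acl(\bar b)$ continuously and uniformly, turning the relevant relations into definable ones; together with $\aleph_0$-categoricity this yields the definability assumptions of Theorem~\ref{thm:main}. Feeding this back into the argument for the general theorem completes the proof.
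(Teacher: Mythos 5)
Your proposal follows essentially the same route as the paper: equi-homogeneity gives homogeneity (hence exactness) and, via Lemma \ref{l:from-homog-to-cont-sym}, continuous symmetry of nonforking, so Assumption \ref{assumption:main} holds and Theorem \ref{thm:main} applies; and Theorem \ref{thm:main} itself is proved exactly as you outline, through the elementary class $\mcE$ of Definition \ref{d:class of ec models} (elementarity from the definable predicates $D_{O,k}$ and $I_{O,k,l}$), superstability for the direction that members of $\mcE$ are existentially closed, the stationarity/extension construction plus model completeness for the converse, and Lascar-style amalgamation of $T_2$ from stability to upgrade the model companion to a model completion. Two small caveats: the paper also needs the reduction of existential closedness to one-step configurations (Lemma \ref{l:exist-closedness}), which your sketch glosses over when you invoke the scheme to conclude e.c.-ness, and your parenthetical assertion that quantifier elimination follows from amalgamation of $T_{\mbF_k}$ is incorrect as stated---QE requires amalgamation of the universal part $(T_k)_\forall$ as in Corollary \ref{cor:qe}---though QE is not needed for the statement being proved.
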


This result applies to probability algebras---our main motivation---but also, more generally, to randomizations of classical structures. On the other hand, our assumptions do not hold in the basic case of Hilbert spaces, for which it is known that the e.c.\ $\Gamma$-actions of any countable group $\Gamma$ are axiomatizable \cite{berHilb}---and in a particularly simple manner, namely, by their existential theory. Also, in the recent preprint \cite{sciBanach}, A.\ Scielzo shows that the theory of atomless $L^p$ Banach lattices with an automorphism has a model completion---an example in which the base separable structure is not even homogeneous. It thus seems plausible that an alternative argument could yield a positive answer to the following.

\begin{question}
Does the theory of $\mbF_k$-actions on models of $T$ admit a model completion for every model complete, $\aleph_0$-categorical, superstable metric theory $T$?
\end{question}

In Section~\ref{s:metric-generics}, we record a fact relating Robinson genericity in continuous logic to the notion of \emph{metrically generic} actions from the work of Ben Yaacov--Berenstein--Melleray \cite{BBM-topometric}. More precisely, we give a proof that metrically generic actions are existentially closed. Later, in Section~\ref{s:profinite-completion}, we use and adapt the methods from \cite{BBM-topometric} to give two examples of metrically generic pmp $\mbF_k$-actions. In particular, strengthening a result of Kechris \cite{kechrisWeak}, we show:

\begin{theorem}[See Theorem \ref{thm:metric-gen-profinite}]
The pmp system of $\mbF_k$ on its profinite completion equipped with the Haar measure is metrically generic, and thus existentially closed.
\end{theorem}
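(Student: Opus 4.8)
The clause ``and thus existentially closed'' is immediate from the general fact established in Section~\ref{s:metric-generics}, that every metrically generic action is existentially closed; so the entire task is to prove that the translation action $\alpha_0$ of $\mbF_k$ on $(\wFk,\text{Haar})$ is metrically generic. Since the Haar measure on $\wFk$ is atomless, $\alpha_0$ is a point of the Polish topometric space of pmp $\mbF_k$-actions on the separable atomless probability algebra $\mcX$, that is, of $\Aut(\mcX)^k$ equipped with the product of the weak topology and the uniform metric $d_u$; write $\mcX_{\alpha_0}$ for the associated $\mbF_k$-system. My plan is to verify, by adapting the Fra\"iss\'e-style machinery of \cite{BBM-topometric} and localising it at $\alpha_0$, the two conditions that together yield metric genericity: (a) the $\Aut(\mcX)$-conjugacy class of $\alpha_0$ is weakly dense in $\Aut(\mcX)^k$, and (b) $\alpha_0$ is \emph{approximately homogeneous} over its finite invariant subalgebras, and its age contains every finite pmp $\mbF_k$-system up to arbitrarily small error --- in other words, $\alpha_0$ is the metric Fra\"iss\'e limit of this age.

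The structural input for both conditions is that $\alpha_0$ is the inverse limit of the finite transitive actions $\mbF_k\actson\mbF_k/N$ over the finite-index normal subgroups $N\trianglelefteq\mbF_k$; dually, $\mcX_{\alpha_0}$ is the dense directed union of the finite invariant subalgebras $\mcA_N$ dual to the quotient maps $\mbF_k\to\mbF_k/N$ (and the finite invariant subalgebras of $\mcX_{\alpha_0}$ are exactly the $\mcA_H$ induced by finite-index subgroups $H\le\mbF_k$, two of which are isomorphic as $\mbF_k$-systems precisely when the subgroups are conjugate in $\mbF_k$). For (a): given any pmp $\mbF_k$-action $\beta$, finitely many $\gamma_l\in\mbF_k$, finitely many measurable $b_j$ in $\mcX_\beta$, and $\epsilon>0$, one picks $N$ deep enough that the $\gamma_l$ are pairwise distinct in $\mbF_k/N$, and then chooses subsets of $\mbF_k/N$ independently at random with densities $\mu(b_j)$; since a nontrivial element of $\mbF_k$ acts on $\mbF_k/N$ without fixed points, concentration of measure makes this random choice match all of the correlations $\mu(\gamma_l\,b_j\cap b_{j'})$ to within $\epsilon$ with high probability. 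As $\mcA_N$ embeds into $\mcX_{\alpha_0}$, this shows $\beta$ is weakly contained in $\alpha_0$, hence --- recalling that in $\Aut(\mcX)^k$ the weak closure of a conjugacy class is its downward closure for weak containment --- that the conjugacy class of $\alpha_0$ is dense; this density property, and a fortiori the metric genericity we are after, strengthens the result of Kechris~\cite{kechrisWeak}.

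For (b), the statement about the age is covered by the very same random-subset argument --- every finite pmp $\mbF_k$-system is, up to $\epsilon$, a subalgebra of some $\mcA_N$ --- so the real content is approximate homogeneity: if $\mcA_1,\mcA_2\subseteq\mcX_{\alpha_0}$ are finite invariant subalgebras and $\phi\colon\mcA_1\to\mcA_2$ is an isomorphism of $\mbF_k$-systems, then $\phi$ is $d_u$-$\epsilon$-approximated by some element of $\Aut(\mcX)$ commuting with $\alpha_0$. Here I would exploit the profinite group structure of $\wFk$: the centralizer of $\alpha_0$ in $\Aut(\mcX)$ contains the right translations by $\wFk$, which permute the finite invariant subalgebras transitively enough (finite-index subgroups of $\mbF_k$ being moved among their conjugates), and combining right translations with a passage to a deeper finite quotient $N'\trianglelefteq\mbF_k$, $N'\le N$ --- inside which the position of $\mcA_{N'}$ over the base is as generic as desired, again by a fibrewise version of the random-subset argument --- realizes $\phi$ up to $\epsilon$. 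Granting (a) and (b), the topometric Fra\"iss\'e criterion of \cite{BBM-topometric} yields that $\alpha_0$ is metrically generic.

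I expect the main obstacle to be (b), and within it the quantitative side of approximate homogeneity and of absorbing an arbitrary finite extension of a base $\mcA_N$ into the finite approximants $\mcA_{N'}$ of $\alpha_0$: this must be done simultaneously (i) $d_u$-close to fixing $\mcA_N$ and (ii) correctly, up to $\epsilon$, on all finitely many generators of the extension at once, and entirely through finite quotients of the \emph{non-amenable} group $\mbF_k$. Controlling how deep $N'$ must be chosen, and keeping the perturbation uniformly small across the whole system of fibres, is the combinatorial heart of the proof, and it is precisely at this point that the methods of \cite{BBM-topometric} require genuine adaptation rather than a direct citation.
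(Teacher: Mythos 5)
There is a genuine gap, in fact two. First, your argument for (a), the density of the conjugacy class of $\alpha_0$ in $\Aut(\mcX)^k$: choosing subsets of $\mbF_k/N$ \emph{independently at random} with densities $\mu(b_j)$ can only reproduce product-type correlations ($\mu(\gamma_l b_j\cap b_{j'})\approx\mu(b_j)\mu(b_{j'})$ for $j\neq j'$, and $\approx\mu(b_j)^2$ for $j=j'$, $\gamma_l\neq e$), so it matches the correlation data of a Bernoulli-like action, not of an arbitrary $\beta$ --- it fails already for the trivial action, or for any $\beta$ with a nontrivial invariant set, where $\mu(\gamma b\cap b)=\mu(b)\neq\mu(b)^2$. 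Density of the conjugacy class of $\mbF_k\actson\wFk$ is exactly Kechris's theorem, which the paper does not reprove but cites (Theorem~\ref{thm:kechris}); its proof goes through the density of \emph{finite} actions in the space of $\mbF_k$-actions (exploiting freeness to modify the generators), not through concentration of measure for independent random sets.

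Second, and more centrally, the criterion you invoke for (b) --- dense conjugacy class plus ``$\alpha_0$ is the metric Fra\"iss\'e limit of its age, approximately homogeneous via its centralizer'' implies metric genericity --- is not a result of \cite{BBM-topometric} and is not what comeagerness of the uniform closure requires. Comeagerness is a local Baire-category statement: for every basic neighborhood $U$ of the identity and every $\eta>0$, the fattened set $(U\cdot\bar g)_\eta$ must be dense near $\bar g$, i.e.\ one must absorb arbitrary small \emph{perturbations of the action} near a fixed finite configuration, not merely realize isomorphisms between the finite invariant subalgebras of $\alpha_0$ itself. (Note also that the finite invariant subalgebras of the ergodic system $\mbF_k\actson\wFk$ are only the transitive finite systems with equal-mass atoms, so your description of the age is off; non-ergodic finite systems occur only approximately.) The paper's route is different: it takes the countable approximating substructure $N_0=\Clop(\wFk)$ (the Fra\"iss\'e limit of $\MBAQ$), observes that the discrete action $\mbF_k\actson N_0$ is \emph{not} generic in $\Aut(N_0)^k$ (its class is not even dense there, having no nontrivial fixed clopen sets), and therefore proves genericity relative to the closed subspace $E_k$ of fixed-point-free tuples (Theorem~\ref{thm:Fk-on-N0-is-generic-in-Ek}, using the ergodic Hrushovski property of Lemma~\ref{l:ergodic-Hru-prop} and the explicit $\sigma$-construction with the sets $a_{\pi',S_c}$), checks that $E_k$ is uniformly dense in $\Aut(\mcX)^k$ (Kechris plus a $d_P$-controlled back-and-forth) and that fattenings of relatively open subsets of $E_k$ are open, and then transfers via the variant Theorem~\ref{thm:good-app-substructures-variant}. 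The ``combinatorial heart'' you explicitly defer at the end is precisely this content, so the proposal leaves the main work of the theorem undone.
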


An interesting question that arises is whether every separable e.c.\ pmp $\mbF_2$-action is metrically generic (see the end of Subsection \ref{ss:meas-alg}), or equivalently:

\begin{question}
Are all separable e.c.\ pmp $\mbF_2$-actions approximately isomorphic?
\end{question}

Throughout the paper, we will concentrate mostly on free groups $\mbF_k$ with $k$ finite, and some details are given only in the case $k=2$.  However, our main results from Sections \ref{s:main} and \ref{s:particular-cases} also hold when $k=\omega$, as we show. It seems likely that the same is true for the results in Sections \ref{s:metric-generics} and \ref{s:profinite-completion}, with the appropriate corresponding notion of metric generics, but we leave those details to the interested reader.

The paper is written in the language of model theory, and familiarity with continuous logic is assumed. For basic background see \cite{bbhu08} and \cite{benusv10}. For an extensive treatment of the model theory of probability spaces, see \cite{bh23}.
We hope however that the introductory explanations given above for the case of pmp systems will help the interested readers coming from measured group theory to make the necessary translations.

\noindent\hrulefill

\section{Definability of independence}\label{s:def-of-indep}

Throughout this section and Section 3, unless otherwise specified, we let $\mcL$ be a countable metric first-order language and let $T$ be an $\aleph_0$-categorical $\mcL$-theory.  Thus $T$ has a unique separable model up to isomorphism.  We let $M$ be this separable model; following the standard notation, we write $d$ for its metric. To simplify the presentation we will assume that the language of $T$ is one-sorted, but everything below can be adapted to the multi-sorted case by replacing the finite powers $M^n$ with finite products of sorts of~$M$. We endow the finite powers $M^n$ with the induced distance $d(a,b)=\max_{i<n}d(a_i,b_i)$. We denote the automorphism group $\Aut(M)$ of $M$ by $G$. The group $G$ acts isometrically on the powers $M^n$ by the diagonal action: $ga\coloneqq (ga_i)_{i<n}$.

In this situation, an important consequence of the Ryll-Nardzewski Theorem (see \cite[Fact 1.14]{benusvD-fin}) is the following characterization of definable predicates and sets:  (a) $P \colon M^n \to [0,1]$ is a definable predicate for $T$ if and only if it is continuous and $G$-invariant; (b) $D \subseteq M^n$ is a definable set if and only if it is closed and $G$-invariant.  (For a proof of (a), see \cite[Prop.~2.2]{benkai13}; (b) is proved by applying (a) to $P(x) \coloneqq \text{dist}(x,D)$.)  In particular, whenever $D$ is a closed, $G$-invariant set, we may use the quantifiers $\sup_{x \in D}$ and $\inf_{x \in D}$ without leaving the expressive capabilities of the language of $T$.

Note that when we use the concept \emph{definable} we always mean \emph{without parameters}.  If parameters are allowed, they will always be mentioned specifically.

In what follows we will assume in fact that $T$ satisfies a strengthened form of $\aleph_0$-categoricity that we define next, which always holds in the classical, discrete first-order setting.  

\begin{defin}
Let $T$ and $M$ be as above.  We will say that the $\aleph_0$-categorical theory $T$ is \emph{exact} if for every $n\in\mbN$ and $a\in M^n$, the theory of the structure $(M,a)$ is $\aleph_0$-categorical.
\end{defin}

In other words, we make the hypothesis that categoricity is preserved by naming finite tuples of elements. There are several examples known to satisfy this condition, among them Hilbert spaces, atomless probability algebras and randomizations of discrete $\aleph_0$-categorical structures. On the other hand, the theory $T$ of the Banach lattice 
$$L^1[0,1]=(L^1([0,1],\mathcal{B},\lambda),+,\wedge,\vee,\|\cdot\|)$$ 
is $\aleph_0$-categorical but not exact: take $M\coloneqq L^1[0,1]$, the unique separable model of this theory, and let $f,g$ be any elements of $M$ that are positive and have norm $1$.  Then one has $(M,f) \equiv (M,g)$ by quantifier elimination for $T$. However, if $f$ has full support in $[0,1]$ while $g$ does not, then these two expansions of $M$ are not isomorphic.

The theory \ARV\ of $[0,1]$-valued random variables on an atomless probability space \cite{benOntheories} is also not exact despite being bi-interpretable with the theory of atomless probability algebras---so exactness is not preserved under bi-interpretability.

We recall that $\aleph_0$-categoricity is always preserved by the operation of naming the elements of the algebraic closure of the empty set (see \cite[Prop.~1.15]{benCannot}). So under our assumption it is also preserved by naming the elements of the algebraic closure of a given finite tuple.

We observe as well that this strong form of $\aleph_0$-categoricity is equivalent to saying that $M$ is \emph{homogeneous}, in the sense that for every finite tuples $a,b$ in $M$ with $a\equiv b$ (i.e., having the same type over~$\emptyset$) there is $g\in G$ such that $ga=b$.  Exactness is also equivalent to the statement that the $G$-orbit of every finite tuple is closed, hence definable. In particular, under our hypothesis we may quantify over any $G$-orbit and consider predicates defined only on a given $G$-orbit.

We recall that the \emph{strong type} of $b$ over $a$ is defined by
$$\stp(b/a)\coloneqq\tp(b/\acl(a)),$$
where the algebraic closure $\acl(a)$ is taken in $M^\meq$, the expansion of $M$ obtained by adding all metric imaginary sorts. We write $b\eqstp_a c$ as short for $\stp(b/a)=\stp(c/a)$.

We will need to consider strong types to ensure stationarity in the sense of stability theory, but metric imaginaries and the $M^\meq$ construction will play no other role in the paper. The imaginaries needed are canonical parameters for $M$-definable predicates, which are discussed in \cite[\textsection 5]{benusv10}.  Stationarity of these strong types is shown in \cite[Prop.~8.11(vii)]{benusv10}.
 
The reader may just as well consider only ordinary types and assume that all types over finite sets are stationary (which is the case in our main example, the theory of atomless probability measure algebras).

Given $n,k\in\mbN$ and a $G$-orbit $O\subseteq M^n$, we define $D_{O,k}\colon O\times M^k\times M^k\to\mbR$ by
$$D_{O,k}(a,b,c)\coloneqq d(\stp(b/a),\stp(c/a))$$
for every $a\in O$ and $b,c\in M^k$. Recall that the distance between two types is the infimum of the distance between realizations of the types in elementary extensions. Since here we consider types over the algebraic closure of finite sets, and since $T$ is $\aleph_0$-categorical and exact, this distance is actually achieved by realizations of the types in $M^k$.  (For any finite tuple $a$ from $M$, the expansion of $M$ (or rather, $M^\meq$) by naming the elements of $\acl(a)$ remains $\aleph_0$-categorical in this situation; therefore distances between types are realized in this structure.)  The restriction to a single orbit $O$ will be important, among other things, to ensure the continuity (and hence the definability) of $D_{O,k}$.

\begin{lem}\label{l:DOk-is-def}
For every $n,k\in\mbN$ and every $G$-orbit $O\subseteq M^n$, the predicate $D_{O,k}$ is definable.
\end{lem}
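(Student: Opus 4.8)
The plan is to apply the Ryll--Nardzewski characterization of definable predicates recalled above. Since $T$ is exact, the orbit $O$ is closed and $G$-invariant, hence a definable set, so (quantifying over $O$) a predicate on $O\times M^k\times M^k$ is definable precisely when it is continuous and $G$-invariant; it therefore suffices to verify these two properties for $D_{O,k}$. The $G$-invariance is immediate: any $g\in G$ extends canonically to an automorphism of $M^\meq$, carries $\acl(a)$ onto $\acl(ga)$, and thereby induces an isometry from the space of types over $\acl(a)$ onto the space of types over $\acl(ga)$ that sends $\stp(b/a)$ to $\stp(gb/ga)$; hence $D_{O,k}(ga,gb,gc)=D_{O,k}(a,b,c)$ for all $a\in O$ and $b,c\in M^k$.

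The substantive point is continuity, and this is where the restriction to a single orbit enters. First I would record that for each fixed $a\in O$ the map $(b,c)\mapsto D_{O,k}(a,b,c)$ is Lipschitz, in fact
$$|D_{O,k}(a,b,c)-D_{O,k}(a,b',c')|\le d(b,b')+d(c,c'),$$
which follows from the triangle inequality for the metric on type spaces together with the bound $d(\stp(b/a),\stp(b'/a))\le d(b,b')$ (valid since $b$ and $b'$ realize the respective strong types). The remaining difficulty is continuity in the parameter $a$: a priori $\stp(b/a)$ could vary wildly under small perturbations of $a$, since $\acl(a)$ changes. The resolution is that moving $a$ inside its orbit amounts to applying an automorphism of $M$, and this automorphism can be taken small: $G$ is a Polish group acting continuously and transitively on the Polish space $O$ (Polish because it is closed in the complete separable space $M^n$), so by the open mapping theorem for such actions (Effros's theorem) the orbit map $g\mapsto ga$ from $G$ onto $O$ is open. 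Hence, given $(a_j,b_j,c_j)\to(a,b,c)$ in $O\times M^k\times M^k$, one may write $a_j=g_ja$ with $g_j\to e$ in $G$, and then by $G$-invariance
$$D_{O,k}(a_j,b_j,c_j)=D_{O,k}(a,g_j\inv b_j,g_j\inv c_j).$$
Since each $g_j$ acts isometrically on $M^k$ and $g_j\inv\to e$, one has $d(g_j\inv b_j,b)\le d(b_j,b)+d(g_j\inv b,b)\to 0$ and similarly $g_j\inv c_j\to c$; by the Lipschitz estimate the right-hand side tends to $D_{O,k}(a,b,c)$. This yields sequential, hence topological, continuity of $D_{O,k}$ on $O\times M^k\times M^k$, completing the argument.

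The step I expect to need the most care is this last reduction: one must check that $O$ is genuinely Polish and the $G$-action on it continuous, so that Effros's theorem applies, and that the ancillary facts about the metric on spaces of strong types over the algebraic closure of a finite tuple — existence of the infimum, the triangle inequality, isometric behaviour under $G$ — are cleanly available for the exact, $\aleph_0$-categorical, stable theory $T$; these are recorded or cited in the preceding discussion. The one genuinely essential use of the single-orbit hypothesis (rather than working over an arbitrary closed $G$-invariant set) is precisely the transitivity needed for the open mapping theorem.
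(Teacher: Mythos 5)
Your proof is correct, but it takes a genuinely different route from the paper's. The paper never establishes joint continuity of $D_{O,k}$ in the parameter $a$: after checking $G$-invariance (essentially as you do, via realizations rather than the induced isometry of type spaces), it fixes a single $a\in O$, notes that $(b,c)\mapsto D_{O,k}(a,b,c)$ is continuous and $\Aut(M/a)$-invariant, uses exactness to conclude that this two-variable predicate is definable in the $\aleph_0$-categorical expansion $(M,a)$ --- so $D_{O,k}(a,b,c)=\varphi(a,b,c)$ for a definable predicate $\varphi$ of $M$ --- and then propagates this identity to all $a'\in O$ by $G$-invariance of both sides and transitivity of $G$ on $O$. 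You instead prove continuity jointly in all variables, combining the Lipschitz estimate in $(b,c)$ with an Effros-type argument in $a$ (legitimate here: exactness gives homogeneity and closed, hence Polish, orbits, and the paper itself records the openness of orbit maps in Section 5), and then apply the continuity-plus-invariance criterion directly to the three-variable predicate. Both proofs rest on the same two pillars --- exactness/homogeneity and the Ryll-Nardzewski-type characterization of definable predicates --- but your decomposition trades the passage to the expansion $(M,a)$ for Effros's theorem. What the paper's version buys is that the criterion is only ever applied to a predicate on a full power $M^{2k}$ (of the expansion), exactly the form in which it is stated in Section 2; your version applies it to a predicate defined only on the definable set $O\times M^k\times M^k$, and since your continuity is merely pointwise (not uniform) you cannot sidestep this by extending off $O$ by hand. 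That relative version of the criterion is true in this setting (the same argument as for full powers, using homogeneity, compactness of the closed set of types concentrating on the definable set, and a Tietze extension), and the paper's remark that one may ``consider predicates defined only on a given $G$-orbit'' indicates it is meant to be available; still, it deserves an explicit word in your write-up, since it is the one step not literally covered by the facts quoted at the start of Section 2. Your closing observation is accurate: in both arguments the single-orbit hypothesis enters precisely through transitivity.
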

\begin{proof}
The predicate $D_{O,k}$ is $G$-invariant. Indeed, given $a\in O$ and $b,c\in M^k$, take tuples $b',c'\in M^k$ such that $b'\eqstp_a b$, $c'\eqstp_a c$, and $D_{O,k}(a,b,c)=d(b',c')$. If $g\in G$, then $gb'\eqstp_{ga}gb$, 
$gc'\eqstp_{ga}gc$, and
$$D_{O,k}(ga,gb,gc)=D_{O,k}(ga,gb',gc')\leq d(gb',gc')=d(b',c')=D_{O,k}(a,b,c)\text{.}$$
This also applies to $g^{-1}$ in place of $g$, so it implies $G$-invariance.

Now, if we fix an element $a\in O$, the induced map $M^k\times M^k\to\mbR$, $(b,c)\mapsto D_{O,k}(a,b,c)$, is clearly continuous and $\Aut(M/a)$-invariant, hence definable in $(M,a)$ since this expansion is $\aleph_0$-categorical. Thus there is a definable predicate $\varphi$ of $M$ such that for every $b,c\in M^k$ we have $D_{O,k}(a,b,c)=\varphi(a,b,c)$. Since $D_{O,k}$ and $\varphi$ are $G$-invariant and $O=Ga$, it follows that $D_{O,k}(a',b,c)=\varphi(a',b,c)$ for every $a'\in O$ and $b,c\in M^k$. That is, $D_{O,k}$ is definable. 
\end{proof}

From now on we suppose that the theory $T$ is also stable. For the proof of our main theorem in Section~\ref{s:main}, we will need a definability result analogous to the one presented above, for the distance function to non-forking extensions. In the rest of this section, we identify a necessary and sufficient condition for this definability result to hold, namely that the stable independence relation is \emph{continuously symmetric} (see Definition~\ref{d:continuously-symmetric} and Lemma \ref{l:IOkl-is-def}). In Section~\ref{s:particular-cases} we will see that this condition follows from a strong form of homogeneity which is easy to check in concrete examples.

Throughout, the symbol $\ind$ denotes the independence relation given by non-forking. Additionally, given $\epsilon>0$ and tuples $a,b,c$, we will write $b^\epsilon\ind_a c$ to mean that there is $b'$ in an elementary extension such that $b'\eqstp_a b$ and $b'\ind_a c$ and $d(b,b')<\epsilon$. When all tuples are finite, such a $b'$ can be taken in $M$, because the expansion of $M$ (or rather, $M^\meq$) with names for the elements of $\acl(a)c$ is $\aleph_0$-categorical. Similarly, $b\ind_a c^\epsilon$ means that there is $c'$ such that $c'\eqstp_a c$ and $b\ind_a c'$ and $d(c,c')<\epsilon$.

\begin{rem}\label{rem:eps-ind-without-stp}
Given $a,b,c$, if there is any $b'$ such that $d(b,b')<\epsilon$ and $b'\ind_a c$, then $b^{2\epsilon}\ind_a c$. Indeed, in that case let $b''$ be such that $b''\eqstp_{ab'}b$ and $b''\ind_{ab'}c$. Then, by transitivity of independence, $b''\ind_a c$. On the other hand, $d(b'',b)\leq d(b'',b')+d(b',b)<2\epsilon$ because $d(b'',b')=d(b,b')$. Thus $b''$ witnesses the fact that $b^{2\epsilon}\ind_a c$.

Similarly, if there is $c'$ with $d(c,c')<\epsilon$ and $b\ind_a c'$, then $b\ind_a c^{2\epsilon}$.
\end{rem}

\begin{defin}\label{d:continuously-symmetric}
We will say that the independence relation $\ind$ is \emph{continuously symmetric on $M$} if for every $n,k,l\in\mbN$, every $G$-orbit $O\subseteq M^n$ and every $\epsilon>0$, there is $\delta>0$ such that, for every $a\in O$, $b\in M^k$, and $c\in M^l$,
$$\text{if $b^\delta\ind_ac$, then $c^\epsilon\ind_ab$}.$$
\end{defin}

Note that, by symmetry of $\ind$, the implication displayed in the previous definition may be equivalently replaced by any of the following:
\begin{itemize}
\item if $b^\delta\ind_ac$, then $b\ind_ac^\epsilon$;
\item if $b\ind_ac^\delta$, then $b^\epsilon\ind_ac$.
\end{itemize}

Now, given $n,k,l\in\mbN$ and a $G$-orbit $O\subseteq M^n$, we define $I_{O,k,l}\colon O\times M^k\times M^l\to\mbR$ by
$$I_{O,k,l}(a,b,c)\coloneqq \inf\{\epsilon>0: \textstyle{b^\epsilon\ind_a c}\}.$$
Note that $I_{O,k,l}(a,b,c)=d(\stp(b/ac),\stp(b/a){\upharpoonright}^{ac})$, where $\stp(b/a){\upharpoonright}^{ac}$ denotes the unique non-forking extension of $\stp(b/a)$ to a strong type over $ac$. The next result gives the promised characterization of definability for this distance function.

\begin{lem}\label{l:IOkl-is-def}
The following are equivalent:
\begin{enumerate}
\item The stable independence relation is continuously symmetric on $M$.

\item For every $n,k,l\in\mbN$ and every $G$-orbit $O\subseteq M^n$, the predicate $I_{O,k,l}$ is definable.
\end{enumerate}
\end{lem}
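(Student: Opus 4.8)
The plan is to read off the equivalence from the Ryll-Nardzewski characterization of definable predicates, following the template of the proof of Lemma~\ref{l:DOk-is-def}. First I would record that $I_{O,k,l}$ is always $G$-invariant: for $g\in G$ one has $b^\epsilon\ind_a c$ iff $(gb)^\epsilon\ind_{ga}(gc)$, since $g$ preserves the metric, strong types and non-forking, so $I_{O,k,l}(ga,gb,gc)=I_{O,k,l}(a,b,c)$. Then, exactly as in Lemma~\ref{l:DOk-is-def} --- fix $a\in O$, obtain a definable predicate of the $\aleph_0$-categorical expansion $(M,a)$, and spread it over the orbit using $G$-invariance and $O=Ga$ --- one gets that $I_{O,k,l}$ is definable if and only if, for one (equivalently every) $a\in O$, the map $(b,c)\mapsto I_{O,k,l}(a,b,c)$ is continuous on $M^k\times M^l$. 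So the whole statement reduces to: continuous symmetry is equivalent to the continuity of all of these maps.

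For the direction (1)$\Rightarrow$(2), fix $a\in O$ and treat the two arguments separately, using the identity $I_{O,k,l}(a,b,c)=d(\stp(b/ac),\stp(b/a){\upharpoonright}^c)$ together with $|d(x,y)-d(x',y')|\le d(x,x')+d(y,y')$ in the space of strong types over $\acl(ac)$. Continuity in $b$ comes for free, \emph{without} using continuous symmetry: one has $d(\stp(b/ac),\stp(\tilde b/ac))\le d(b,\tilde b)$, and for the non-forking extension the ``continuous stationarity'' bound $d(\stp(b/a){\upharpoonright}^c,\stp(\tilde b/a){\upharpoonright}^c)\le d(\stp(b/a),\stp(\tilde b/a))$ follows by realizing the non-forking extension over $ac$ of the \emph{joint} strong type $\stp(b_0\tilde b_0/a)$, where $b_0\eqstp_a b$ and $\tilde b_0\eqstp_a\tilde b$ are chosen with $d(b_0,\tilde b_0)$ (nearly) minimal: the realizing pair $(b_1,\tilde b_1)$ then satisfies $b_1\models\stp(b/a){\upharpoonright}^c$ and $\tilde b_1\models\stp(\tilde b/a){\upharpoonright}^c$ by monotonicity and stationarity, while $d(b_1,\tilde b_1)=d(b_0,\tilde b_0)$ since the metric is a definable predicate and $(b_1,\tilde b_1)$ has the same strong type over $a$ as $(b_0,\tilde b_0)$. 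Continuity in $c$ is where continuous symmetry enters: writing $I_{O,k,l}(a,b,c)=d(b,b^*)$ with $b^*\eqstp_a b$ and $b^*\ind_a c$, and given $d(c,\tilde c)<\rho$, one uses symmetry of $\ind$ to get $c\ind_a b^*$, then Remark~\ref{rem:eps-ind-without-stp} to get $b^*\ind_a\tilde c^{2\rho}$, and then continuous symmetry (in the equivalent form ``$b\ind_a c^\delta\Rightarrow b^\epsilon\ind_a c$'' from Definition~\ref{d:continuously-symmetric}), applied to $b^*$ and $\tilde c$, to produce $b^{**}\eqstp_a b$ with $b^{**}\ind_a\tilde c$ and $d(b^*,b^{**})<\epsilon$, valid as soon as $2\rho$ is below the corresponding modulus; this gives $I_{O,k,l}(a,b,\tilde c)\le I_{O,k,l}(a,b,c)+\epsilon$, and symmetrically. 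Together, $(b,c)\mapsto I_{O,k,l}(a,b,c)$ is (uniformly) continuous, hence $I_{O,k,l}$ is definable.

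For (2)$\Rightarrow$(1), suppose continuous symmetry fails for some $n,k,l$, some $G$-orbit $O$, and some $\epsilon>0$: for each $m$ choose $a_m\in O$, $b_m\in M^k$, $c_m\in M^l$ with $I_{O,k,l}(a_m,b_m,c_m)\le 1/m$ but $I_{O,l,k}(a_m,c_m,b_m)\ge\epsilon$. Pass to an ultrapower $\mathfrak{N}=M^{\mcU}$ and let $(a_*,b_*,c_*)$ be the image of the sequence. Then $a_*$ lies in the (definable) set $O$, and since definable predicates commute with ultraproducts, $I_{O,k,l}^{\mathfrak{N}}(a_*,b_*,c_*)=0$ while $I_{O,l,k}^{\mathfrak{N}}(a_*,c_*,b_*)\ge\epsilon$. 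By the standard definability of non-forking in stable theories, the zero set of the definable predicate $I_{O,k,l}$ is the non-forking relation in every model of $T$, and likewise for $I_{O,l,k}$; hence $b_*\ind_{a_*}c_*$ but $c_*\nind_{a_*}b_*$, contradicting symmetry of non-forking in $\mathfrak{N}\models T$.

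I expect the continuity-in-$c$ step of (1)$\Rightarrow$(2) to be the delicate point, since it is the only place where the hypothesis is genuinely used and the perturbation radii must be bookkept so that the resulting modulus of continuity of $I_{O,k,l}$ is uniform in $a\in O$ and $b\in M^k$ (the continuity in $b$, by contrast, is automatic and even Lipschitz). A secondary point that needs care is the transfer in (2)$\Rightarrow$(1): that the definable predicate $I_{O,k,l}$ still computes the distance to the non-forking extension in the ultrapower, which is standard but rests on the definability of forking in stable theories.
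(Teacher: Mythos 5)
Your proof is correct, but parts of it take a genuinely different route from the paper's. The reduction of definability to continuity of $I_a$ for a fixed $a\in O$ (via the template of Lemma~\ref{l:DOk-is-def} and $G$-invariance), and the use of continuous symmetry only for continuity in the $c$-variable, match the paper exactly. For continuity in $b$, the paper instead perturbs the witness by an automorphism over $\acl(a)$ and applies Remark~\ref{rem:eps-ind-without-stp}, while you prove a Lipschitz bound $|I_a(b,c)-I_a(\tilde b,c)|\le 2\,d(b,\tilde b)$ by realizing the non-forking extension of the joint strong type $\stp(b_0\tilde b_0/a)$ and invoking stationarity; both work, and your version has the merit of making explicit that continuous symmetry is never needed in the $b$-argument. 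The real divergence is in $(2)\Rightarrow(1)$: the paper argues directly inside $M$ --- uniform continuity of the definable predicate $I_{O,k,l}$ in its last argument turns $b\ind_a c^\delta$ into $I_{O,k,l}(a,b,c)<\epsilon$, i.e.\ $b^\epsilon\ind_a c$, which is an equivalent form of continuous symmetry --- whereas you argue by contradiction in an ultrapower and invoke symmetry of non-forking there. Your route is sound, but it requires the extra fact that the definable predicate keeps its intended meaning (distance to the non-forking extension) in the ultrapower; the paper establishes this only in the remark following the lemma, by passing to separable elementary submodels and using $\aleph_0$-categoricity together with the absoluteness of $\acl$, of non-forking extensions and of distances between types --- ``standard definability of non-forking'' is not by itself the right justification, though you do flag this as the delicate point. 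Since that remark's justification uses only the definability hypothesis (2), there is no circularity in your argument; it is simply heavier than the paper's two-line direct proof, which avoids any transfer to nonseparable models.
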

\begin{proof}
$(1)\Rightarrow(2):$ The predicate $I_{O,k,l}$ is clearly $G$-invariant. As in the proof of Lemma~\ref{l:DOk-is-def}, it suffices to fix an element $a\in O$ and see that the induced map
$$I_a\colon M^k\times M^l\to\mbR,\  (b,c)\mapsto I_{O,k,l}(a,b,c)$$
is continuous and $\Aut(M/a)$-invariant.

Invariance under $\Aut(M/a)$ follows from $G$-invariance of $I_{O,k,l}$. To see that $I_a$ is continuous, it is enough to see that for every $\epsilon>0$ there is $\eta>0$ such that $|I_a(b,c)-I_a(\tilde b,c)|\leq\epsilon$ whenever $d(b,\tilde b)<\eta$, and $|I_a(b,c)-I_a(b,\tilde c)|\leq\epsilon$ whenever $d(c,\tilde c)<\eta$. Fix $\epsilon>0$ and choose a corresponding $\delta>0$ satisfying the properties of Definition~\ref{d:continuously-symmetric}. Then set $\eta=\min(\delta,\epsilon)/4$.

If $d(b,\tilde b)<\eta$ and $I_a(b,c)<r$, let $b'\eqstp_a b$ be such that $b'\ind_a c$ and $d(b,b')<r$. Now choose $g\in\Aut(M/\acl(a))$ mapping $b$ to $b'$. Then $d(b',g\tilde b)<\eta$, so $(g\tilde b)^{2\eta}\ind_a c$ by Remark~\ref{rem:eps-ind-without-stp}. Since $g\tilde b\eqstp_a \tilde b$ and $d(\tilde b,g\tilde b)\leq d(\tilde b,b)+d(b,b')+d(b',g\tilde b)<r+2\eta$, we obtain that $I_a(\tilde b,c)<r+4\eta$. We conclude that $I_a(\tilde b,c)\leq I_a(b,c)+\epsilon$ and, by symmetry, $|I_a(b,c)-I_a(\tilde b,c)|\leq\epsilon$.

Now suppose $d(c,\tilde c)<\eta$ and $I_a(b,c)<r$. Choose $b'\eqstp_a b$ with $b'\ind_a c$ and $d(b,b')<r$. Since $d(c,\tilde c)<\delta/2$, by Remark~\ref{rem:eps-ind-without-stp} we have $b'\ind_a \tilde c^\delta$ and thus, by the choice of $\delta$ (witnessing that $\ind$ is continuously symmetric), $b'^\epsilon\ind_a \tilde c$. It follows that $I_a(b,\tilde c)<r+\epsilon$. We conclude as before that $|I_a(b,c)-I_a(b,\tilde c)|\leq\epsilon$.

$(2)\Rightarrow(1):$ We fix $O,k,l$ and some $\epsilon>0$. Since $I_{O,k,l}$ is definable, and hence uniformly continuous, there is $\delta>0$ such that $|I_{O,k,l}(a,b,c)-I_{O,k,l}(a,b,c')|<\epsilon$ whenever $d(c,c')<\delta$. Thus, if $b\ind_a c^\delta$, there is $c'$ such that $d(c,c')<\delta$ and $b\ind_a c'$, so $I_{O,k,l}(a,b,c')=0$ and $I_{O,k,l}(a,b,c)<\epsilon$. This means that $b^\epsilon\ind_a c$, proving that independence is continuously symmetric.
\end{proof}

\begin{rem}
Recall that in infinite-dimensional Hilbert spaces, orthogonality (in the sense of the inner product) agrees with forking independence. We will now show that the orthogonality relation in Hilbert spaces is not continuously symmetric. 

Let $u,v,w$ be three unit vectors such that $0<\|u-v\|<\delta/2$ and $u\perp w$, and $w$ is in the plane generated by $u,v$. Then using Remark \ref{rem:eps-ind-without-stp} we have $(uv)^\delta\perp w$, but every unit vector $w'$ with $uv\perp w'$ is at distance $\sqrt{2}$ from $w$. Thus forking independence is not definable in the sense of definability of the predicates $I_{O,k,l}$. This implies that our approach to the axiomatization of existentially closed $\mbF_2$-actions will not apply to Hilbert spaces, although, as recalled in the introduction, we know in this case that they are axiomatizable (\cite{berHilb}).
\end{rem}

We have thus proved that under appropriate assumptions, the predicates $D_{O,k}$ and $I_{O,k,l}$ are definable in the unique separable model $M$ of $T$. Let us remark that if $N$ is an arbitrary model of $T$, then the formulas defining these predicates have the same \emph{intended meaning} in $N$ (i.e., they calculate the distance between strong types and the distance to the non-forking extension, respectively). One should note beforehand that if $O\subseteq M^n$ is a given $G$-orbit, then $O^N$ (i.e., the corresponding definable set as interpreted in $N$) is the set of realizations of the $\emptyset$-type corresponding to $O$. These remarks can be justified by passing to a separable elementary submodel, where by $\aleph_0$-categoricity the formulas considered have the intended meaning, and then noting that this meaning is independent of the ambient model.

\noindent\hrulefill

\section{Existentially closed $\mbF_k$-actions}\label{s:main}

We prove in this section our main result, namely the characterization and axioma-tizability---under appropriate conditions---of the existentially closed models of $T_k$, the theory $T$ expanded with $k$ automorphisms. For notational simplicity we discuss the details in the case $k=2$, but everything holds, \emph{mutatis mutandis}, for any finite $k$, and hence also for $k=\omega$, as we explain in the proof of Theorem~\ref{thm:main} below.\footnote{In fact, the general case (including $k=\omega$) can also be formally deduced from the case $k=2$ by using the trick of \cite[Cor.~2.5]{kikpil}, based on a lemma proved by Chatzidakis.}

In addition to the hypotheses discussed in the previous section, we will require here that the base theory $T$ is \emph{superstable}. We recall that a stable metric theory is superstable if for every $\epsilon>0$ and every finite tuple $b$ and set $C$ in any model, there is a finite subset $A\subseteq C$ such that $b^\epsilon\ind_A C$. Every $\aleph_0$-stable metric theory is superstable; see, for example, \cite[Thm.~4.13]{benCatcat}.

We will also assume $T$ is \emph{model complete}, i.e., every embedding of models of $T$ is elementary, which is equivalent to the fact that every definable predicate $P(x)$ in models of $T$ is \emph{existential} (more appropriately called a \emph{decreasing predicate} or \emph{$\inf$-predicate}), i.e., it can be approximated by formulas of the form $\inf_y\varphi(x,y)$ where $\varphi$ is quantifier-free.

We repeat all our hypotheses for clarity:

\begin{assumption}\label{assumption:main}
Throughout this section, $\mcL$ is a countable metric first-order language and $T$ is a model complete, $\aleph_0$-categorical, exact, superstable $\mcL$-theory with a continuously symmetric stable independence relation.
\end{assumption}

We consider the expansion $\mcL_2=\mcL\cup\{\sigma_1,\sigma_2,\sigma_{-1},\sigma_{-2}\}$ of the language of $T$ with four new unary function symbols, and we define $T_2$ as the $\mcL_2$-theory that extends $T$ and states that $\sigma_1$ and $\sigma_2$ are $\mcL$-automorphisms and that $\sigma_{-1}$ and $\sigma_{-2}$ are their respective inverses. In other words, $(N,\tau_1,\tau_2,\tau_{-1},\tau_{-2})\models T_2$ iff $N\models T$ and $\tau_1,\tau_2,\tau_{-1},\tau_{-2}\in\Aut(N)$ with $\tau_1\inv=\tau_{-1}$ and $\tau_2\inv=\tau_{-2}$. It is easy to see that $T_2$ is a continuous first-order theory. We refer to a model of $T_2$ as an \emph{$\mbF_2$-action} on a model of $T$.

\begin{rem} The inclusion of the inverses in the language (which ensures that the inverses pass to substructures) is not necessary for the results of this section, but it will be important in concrete examples in order to verify the amalgamation hypothesis of Corollary~\ref{cor:qe} below (see Remark~\ref{rem:T2forall=Tforall2} and Theorem~\ref{thm:main-example}). In other words, this seems to be the appropriate language for quantifier elimination. For brevity, however, we shall omit any reference to $\tau_{-1},\tau_{-2}$ and denote the models of $T_2$ simply by $(N,\tau_1,\tau_2)$.
\end{rem}

    The reader may note the following easy fact. For basics on inductive theories in continuous logic, see 
\cite[\textsection 3]{usvyGeneric}. Amalgamation for models of $T_2$ follows from the stability (and model completeness) of $T$, as in \cite[Thm.~3.3]{lasAutour}.

\begin{prop}
The theory $T_2$ is inductive and has the amalgamation property.
\end{prop}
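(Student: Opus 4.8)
The plan is to prove the two assertions separately, as they are largely independent.

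\emph{Inductivity.} I would verify that $T_2$ is inductive by showing that the union of an increasing chain $(N_\alpha,\tau_1^\alpha,\tau_2^\alpha)$ of models of $T_2$ is again a model of $T_2$. Since $T$ is model complete, it is in particular inductive (model-complete theories are preserved under unions of chains), so the union $N=\bigcup_\alpha N_\alpha$ satisfies $T$. It remains to check that the maps $\tau_1=\bigcup_\alpha\tau_1^\alpha$ and $\tau_2=\bigcup_\alpha\tau_2^\alpha$ are well-defined automorphisms of $N$ with the prescribed inverses; this is routine, since each $\tau_i$ is total on $N$ (every element lies in some $N_\alpha$), it agrees with an automorphism on each $N_\alpha$ hence preserves all $\mathcal{L}$-structure, and the relation $\tau_i\circ\tau_{-i}=\mathrm{id}=\tau_{-i}\circ\tau_i$ passes to the union. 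One should note that in the metric setting ``union of a chain'' should be read as the completion of the union, but the $\tau_i$ extend uniquely and isometrically to the completion. Alternatively, and more cleanly, one invokes \cite[\textsection 3]{usvyGeneric}: $T_2$ is axiomatized by conditions of the form $\sup_{\bar x}\varphi(\bar x)=0$ together with the $\forall$-axioms expressing that $\sigma_i$ is an $\mathcal{L}$-homomorphism and $\sigma_{-i}$ its inverse, and all of these are $\forall$-type conditions, so $T_2$ is inductive by the syntactic criterion.

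\emph{Amalgamation.} For the amalgamation property, suppose we are given models $(N_0,\tau_1^0,\tau_2^0)\models T_2$ together with embeddings into $(N_1,\tau_1^1,\tau_2^1)\models T_2$ and $(N_2,\tau_1^2,\tau_2^2)\models T_2$. I would follow the strategy of \cite[Thm.~3.3]{lasAutour}, as the reference suggests. The key point is that, since $T$ is stable, one has the non-forking amalgam: working in a monster model of $T$, we may assume $N_1\ind_{N_0}N_2$ (after moving $N_2$ by an $\mathcal{L}$-automorphism over $N_0$), and then $\mathrm{tp}_{\mathcal{L}}(N_1/N_2)$ is the non-forking extension of $\mathrm{tp}_{\mathcal{L}}(N_1/N_0)$, hence stationary over $N_0$. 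Let $N_3$ be a model of $T$ containing (a copy of) $N_1\cup N_2$ with this independence. Now $\tau_i^1$ is an $\mathcal{L}$-automorphism of $N_1$ and $\tau_i^2$ one of $N_2$, and they agree on $N_0$; I want to extend their common action to an automorphism of $N_3$. Here stationarity does the work: $\tau_i^1\cup\tau_i^2$ is a partial elementary map from $N_1N_2$ to itself (it is elementary on $N_1$, elementary on $N_2$, and because $N_1\ind_{N_0}N_2$ with the type of $N_1$ over $N_2$ stationary, the combined map preserves types of the form $\mathrm{tp}(N_1/N_2)$), so by homogeneity of a suitable monster model of $T$ it extends to an $\mathcal{L}$-automorphism $\tau_i^3$ of an elementary extension $N_3'\succeq N_3$; then $(N_3',\tau_1^3,\tau_2^3)$, which models $T_2$ by model completeness of $T$, amalgamates $N_1$ and $N_2$ over $N_0$.

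The main obstacle — and the step deserving the most care — is precisely the verification that the combined map $\tau_i^1\cup\tau_i^2$ is partial elementary over $N_0$ in the sense of $T$, i.e.\ that it genuinely extends to an automorphism. This is where both the stability of $T$ (to produce the independent amalgam and the stationary type) and the model completeness of $T$ (so that the resulting $\mathcal{L}$-structure, together with the extended automorphisms, really is a model of $T_2$ and the embeddings $N_i\hookrightarrow N_3'$ are $\mathcal{L}_2$-embeddings) are used essentially. A subtlety to address is that $\tau_1$ and $\tau_2$ must be extended \emph{simultaneously} and compatibly; but since we only need each $\tau_i^3$ to restrict correctly to $N_1$ and $N_2$, and there is no interaction required between $\tau_1^3$ and $\tau_2^3$ beyond both being $\mathcal{L}$-automorphisms of the same model, one can extend them one at a time (or appeal directly to the homogeneity argument for the pair). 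I would also remark that the inclusion of the inverse symbols $\sigma_{-1},\sigma_{-2}$ in $\mathcal{L}_2$ causes no difficulty here, since an $\mathcal{L}$-automorphism automatically has an $\mathcal{L}$-automorphism as its inverse.
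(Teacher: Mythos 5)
Your proposal is correct and follows essentially the same route as the paper, which simply records this as an easy fact: inductivity via the syntactic/chain criterion of \cite[\textsection 3]{usvyGeneric}, and amalgamation via the independent (non-forking) amalgam and stationarity of types over the base model, exactly as in \cite[Thm.~3.3]{lasAutour}, which is the argument you spell out. Your treatment of the key step (elementarity of $\tau_i^1\cup\tau_i^2$ from stationarity) and of the inverse symbols matches what the paper's citation intends.
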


Our main goal is to prove that $T_2$ admits a \emph{model companion}, i.e., a model complete $\mcL_2$-theory $T_2^*$ such that every model of $T_2$ embeds into a model of $T_2^*$ and vice-versa. Since $T_2$ has the amalgamation property, it then follows that $T_2^*$ is in fact a \emph{model completion} of $T_2$, meaning that for every $A\models T_2$, the theory $T_2^*$ together with the diagram of $A$ is a complete theory.

On the other hand, since $T_2$ is inductive, the existence of a model companion for $T_2$ is equivalent to the axiomatizability of the class of its existentially closed models, and in that case the model companion is precisely the common theory of this class (see, for instance, \cite[Thm.~8.3.6]{hodgesMT}, whose proof works as well in the continuous setting). Recall that $A$ is an \emph{existentially closed} model of $T_2$ if for every extension $A\subseteq B$ with $B\models T_2$, every quantifier-free $\mcL_2$-formula $\varphi(x,y)$, every tuples $a\in A^x$ and $b\in B^y$, and every $\epsilon>0$, one can find $c\in A^y$ with $ |\varphi(a,b)-\varphi(a,c)|<\epsilon$.

We will thus define a class $\mcE$ of models of $T_2$, then prove that it is elementary (i.e., axiomatizable) and that it consists precisely of the existentially closed models of $T_2$. We write $b\eqstp_a^\epsilon c$ to abbreviate $d(\stp(b/a),\stp(c/a))<\epsilon$. 

\begin{defin}\label{d:class of ec models}
We define $\mcE$ to be the class of models $(N,\tau_1,\tau_2)\models T_2$ such that for all $\epsilon>0$ and for every set of finite tuples $a,b_0,b_1,b_2$ in $N$ satisfying the conditions:
\begin{equation}\label{axiom:hyp}\tag{C1}
b_1 \eqstp_{\tau_1(a)}^\epsilon \tau_1(b_0),\ \ b_2\eqstp_{\tau_2(a)}^\epsilon \tau_2(b_0),\ 
\ b_0^\epsilon\ind_a \tau_1(a)\tau_2(a),
\ \ b_1^\epsilon \ind_{\tau_1(a)} a\tau_2(a),\ \ b_2^\epsilon\ind_{\tau_2(a)} a\tau_1(a),
\end{equation}
there is a tuple $c$ in $N$ such that
\begin{equation}\label{axiom:con}\tag{C2}
b_0b_1b_2 \eqstp_a^{2\epsilon} c\tau_1(c)\tau_2(c).
\end{equation}
\end{defin}

Informally, the class $\mcE$ consists of those $\mbF_2$-actions on models of $T$ with the following genericity property: whenever some tuples $b_1$ and $b_2$ behave as the translates of a tuple $b_0$ by $\tau_1$ and $\tau_2$ respectively---relative to some given parameter $a$ and satisfying appropriate independence conditions with respect to $\tau_1(a)$ and $\tau_2(a)$---, there is a tuple $c$ in the model such that $c,\tau_1(c),\tau_2(c)$ behave as $b_0,b_1,b_2$ over $a$. The independence conditions ensure that such a behaviour can be realized in an extension of the action; and this explains why existentially closed actions belong to $\mcE$. Conversely, if some finite configuration with parameters is realized by $b,\tau_1(b),\tau_2(b)$ in an extension of the action, then the model completeness and superstability of $T$ permit to find appropriate tuples $b_0,b_1,b_2$ in a similar position in the model, together with a possibly larger tuple of parameters $a$ satisfying the hypotheses (\ref{axiom:hyp}); if the action belongs to $\mcE$, the condition (\ref{axiom:con}) then allows to conclude that it is existentially closed.

\begin{prop}\label{p:E-is-elementary}
The class $\mcE$ is elementary.
\end{prop}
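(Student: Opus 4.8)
The plan is to show that membership in $\mcE$ can be expressed by a set of $\mcL_2$-conditions, using the definability results of Section~\ref{s:def-of-indep} to turn the approximate conditions \eqref{axiom:hyp} and \eqref{axiom:con} into genuine continuous formulas. First I would fix the relevant data: tuple lengths for $a$, $b_0$, $b_1$, $b_2$ (say $a\in M^n$ and $b_i\in M^k$), together with a choice of $G$-orbit $O\subseteq M^n$ for the tuple $a$ (by exactness, the $G$-orbits of finite tuples in $M$ are closed and definable, so ranging over them is harmless). Since $T$ is model complete, types over $\emptyset$ of finite tuples, and hence orbits, are determined by quantifier-free data in any model of $T_2$; more to the point, one should argue — as in the closing paragraph of Section~\ref{s:def-of-indep} — that the formulas defining $D_{O,k}$ and $I_{O,k,l}$ retain their intended meaning in an arbitrary model $N\models T$, namely they compute distances between strong types and distances to non-forking extensions. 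The key inputs are Lemma~\ref{l:DOk-is-def} and Lemma~\ref{l:IOkl-is-def}: under Assumption~\ref{assumption:main} the independence relation is continuously symmetric, so $I_{O,k,l}$ is definable, and $D_{O,k}$ is definable unconditionally.

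Next I would translate each ingredient. The expression $b_1\eqstp_{\tau_1(a)}^\epsilon\tau_1(b_0)$ becomes the statement $D_{O,k}\bigl(\sigma_1(a), b_1, \sigma_1(b_0)\bigr) < \epsilon$ — note $\sigma_1(a)$ ranges over the orbit $O$ as $a$ does, since the $\sigma_i$ are automorphisms — and likewise for the $\tau_2$-clause. The clause $b_0^\epsilon\ind_a\tau_1(a)\tau_2(a)$ becomes $I_{O,2n,k}\bigl(a, b_0, \sigma_1(a)\sigma_2(a)\bigr)<\epsilon$, using that $b^\epsilon\ind_a c$ holds iff $I_{O,k,l}(a,b,c)<\epsilon$ (here I would be slightly careful: $\inf\{\epsilon: b^\epsilon\ind_a c\}<\epsilon$ is equivalent to $b^\epsilon\ind_a c$ up to replacing $<$ by $\le$ at the boundary, which causes no trouble since we quantify over all $\epsilon>0$); the other two independence clauses are handled symmetrically with the appropriate orbits for $\sigma_1(a)$ and $\sigma_2(a)$. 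The conclusion \eqref{axiom:con} becomes $D_{O,3k}\bigl(a, b_0b_1b_2, c\,\sigma_1(c)\sigma_2(c)\bigr) \le 2\epsilon$. Assembling these, the condition defining $\mcE$ for fixed lengths and fixed orbit $O$ reads: for all $\epsilon>0$,
\[
\sup_{a\in O}\ \sup_{b_0,b_1,b_2}\ \Bigl( \text{hyp}_\epsilon(a,\bar b) \dotplus \inf_{c}\, \bigl( D_{O,3k}(a, b_0b_1b_2, c\,\sigma_1(c)\sigma_2(c)) \dotminus 2\epsilon \bigr) \Bigr) = 0,
\]
where $\text{hyp}_\epsilon$ is the truncated indicator (built from the $D$'s and $I$'s above via connectives like $\dotminus$ and $\max$) that equals $0$ exactly when all five conditions in \eqref{axiom:hyp} hold; more precisely one writes a genuine $\mcL_2$-condition of the form $\sup\sup(\cdots)=0$ for each rational $\epsilon>0$, and $\mcE$ is the class of models of $T_2$ satisfying all of them. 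Since each such condition is a legitimate $\mcL_2$-sentence (all the predicates involved are $\mcL_2$-definable, by the above, and the orbits $O$ are definable sets over which we may quantify), $\mcE$ is axiomatized by $T_2$ together with this countable family, hence elementary.

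The main subtlety — and the step I would spend the most care on — is the bookkeeping around the boundaries of the strict inequalities and the two-sided error ``$\epsilon$'' versus ``$2\epsilon$'', to be sure that the class cut out by the family of conditions $\{\sup\sup(\cdots)=0 : \epsilon\in\mbQ_{>0}\}$ is exactly $\mcE$ and not some slight perturbation of it. The safe route is to show both inclusions directly: if $(N,\tau_1,\tau_2)\in\mcE$ then it satisfies each condition (given $\epsilon$, pick a slightly larger rational $\epsilon'$ and apply the definition of $\mcE$ at $\epsilon'$, using that the defining condition is an infimum over $c$ of a $\dotminus$-truncation, which accommodates the slack); conversely, if $N$ satisfies all the conditions, then for any data satisfying \eqref{axiom:hyp} at some $\epsilon$, a rational $\epsilon'$ slightly above $\epsilon$ still satisfies the (open) hypotheses, the corresponding condition forces the infimum over $c$ to vanish, and a compactness/separability argument (or just the fact, noted in Section~\ref{s:def-of-indep}, that the relevant infima over finite tuples in $\aleph_0$-categorical exact theories are attained in $M$, hence — transferring — effectively attained in $N$ up to arbitrarily small error) yields a witness $c$ with $b_0b_1b_2\eqstp_a^{2\epsilon'}c\,\sigma_1(c)\sigma_2(c)$, and since $\epsilon'$ can be taken arbitrarily close to $\epsilon$, with a further cosmetic adjustment one gets \eqref{axiom:con} as stated. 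A secondary point to address is that, a priori, $a$ and the $b_i$ have unbounded lengths; but the condition for each fixed pair of lengths $(n,k)$ and each orbit $O\subseteq M^n$ is a single $\mcL_2$-condition, and $\mcE$ is the intersection over the (countably many, by $\aleph_0$-categoricity) choices of $(n,k,O)$, so the whole axiomatization is still a countable set of $\mcL_2$-conditions.
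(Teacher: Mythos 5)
Your overall route is the paper's: quantify over the definable orbits $O$, use Lemma~\ref{l:DOk-is-def} and Lemma~\ref{l:IOkl-is-def} (the latter available because continuous symmetry is part of Assumption~\ref{assumption:main}) to replace the approximate strong-type and independence clauses by the predicates $D_{O,k}$ and $I_{O,k,l}$, note that these keep their intended meaning in arbitrary models, and then verify both inclusions. The genuine problem is the displayed axiom. Requiring $\sup_{a\in O}\sup_{\bar b}\bigl(\mathrm{hyp}_\epsilon(a,\bar b)+\inf_c(\,\cdots)\bigr)=0$ (with truncated or ordinary addition, it makes no difference) does not encode the implication ``if (\ref{axiom:hyp}) at $\epsilon$ then (\ref{axiom:con}) at $2\epsilon$'': since both summands are nonnegative, the supremum vanishes only if $\mathrm{hyp}_\epsilon(a,\bar b)=0$ for \emph{every} $a,\bar b$, so your sentence asserts that every tuple satisfies the hypotheses (and in addition admits a witness), which is false in any model and in any case is not membership in $\mcE$. (A smaller issue: no continuous predicate can be zero \emph{exactly} on the set cut out by the strict inequalities of (\ref{axiom:hyp}), since zero sets are closed; and the independence predicate should be $I_{O,k,2n}$, not $I_{O,2n,k}$.) So the one step that carries the content of the proposition --- turning the conditional into a legitimate continuous sentence --- is missing as written.

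The repair is exactly the paper's device, and it also dissolves the $\epsilon$-bookkeeping you were worried about: drop the family indexed by rational $\epsilon$ and write, for each $(O,k)$, the single condition $\inf_{c}\Xi_0\leq 2\max(\Xi_1,\Xi_2,\Psi_0,\Psi_1,\Psi_2)$, i.e.\ $\sup_{a\in O}\sup_{b_0,b_1,b_2}\max\bigl(\inf_c\Xi_0-2\max(\Xi_1,\Xi_2,\Psi_0,\Psi_1,\Psi_2),\,0\bigr)=0$, where $\Xi_0=D_{O,3k}(a,b_0b_1b_2,c\sigma_1(c)\sigma_2(c))$ and $\Xi_1,\Xi_2,\Psi_0,\Psi_1,\Psi_2$ are the $D$- and $I$-predicates attached to the five clauses of (\ref{axiom:hyp}). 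This single inequality is equivalent to your intended $\epsilon$-scheme (if the maximum of the hypothesis terms is $<\epsilon$ it gives $\inf_c\Xi_0<2\epsilon$; conversely, letting $\epsilon$ decrease to that maximum recovers the inequality), and because it compares two continuous definable quantities there is no boundary issue left: the two-way verification that the resulting theory axiomatizes $\mcE$ is then the short check you sketched. If you insist on a per-$\epsilon$ family, the correct sentence is a disjunction, e.g.\ $\sup_{a,\bar b}\min\bigl(\max(\epsilon-\max(\Xi_1,\dots,\Psi_2),0),\ \max(\inf_c\Xi_0-2\epsilon,0)\bigr)=0$, not a sum.
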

\begin{proof}
For every $n,k\in\mbN$ and every $G$-orbit $O\subseteq M^n$, let $A_{O,k}$ be the condition:
$$\forall a\in O\ \forall b_0,b_1,b_2\in M^k\left( \inf_{c\in M^k}\Xi_0 \leq 2\max(\Xi_1,\Xi_2,\Psi_0,\Psi_1,\Psi_2)\right),$$
where:
\begin{itemize}
\item $\Xi_0=D_{O,3k}(a,b_0b_1b_2,c\tau_1(c)\tau_2(c))$,
\item $\Xi_1=D_{O,k}(\tau_1(a),b_1,\tau_1(b_0))$,
\item $\Xi_2=D_{O,k}(\tau_2(a),b_2,\tau_2(b_0))$,
\item $\Psi_0=I_{O,k,2n}(a,b_0,\tau_1(a)\tau_2(a))$,
\item $\Psi_1=I_{O,k,2n}(\tau_1(a),b_1,a\tau_2(a))$,
\item $\Psi_2=I_{O,k,2n}(	\tau_2(a),b_2,a\tau_1(a))$.
\end{itemize}
Here, $D_{O,k}$ and $I_{O,k,2n}$ are the predicates introduced in the previous section, which are definable by Lemma~\ref{l:DOk-is-def} and Lemma~\ref{l:IOkl-is-def}. To see that $A_{O,k}$ is first-order expressible in continuous logic, observe that any condition of the form $\forall x(\varphi_0\leq\varphi_1)$ can be rewritten as $\sup_x\max(\varphi_0-\varphi_1,0)=0$. We recall as well that we may quantify over each definable set, such as each orbit~$O$.

Thus we see each $A_{O,k}$ as a continuous first-order sentence in the language $\mcL_2$. Next we check that the theory $T_2\cup \{A_{O,k}\}_{O,k}$ axiomatizes the class $\mcE$.

Suppose $(N,\tau_1,\tau_2)\in\mcE$ and that we are given $O,k$ and tuples $a\in O^N$ and $b_0,b_1,b_2\in N^k$. If $\epsilon>0$ is any positive real such that the tuples satisfy that $\max(\Xi_1,\Xi_2,\Psi_0,\Psi_1,\Psi_2)<\epsilon$, then the hypotheses (\ref{axiom:hyp}) in the definition of $\mcE$ are satisfied. Hence  for some $c\in N^k$ we have the conclusion (\ref{axiom:con}), which implies that $\inf_{c\in M^k}\Xi_0<2\epsilon$. We deduce that $\inf_{c\in N^k}\Xi_0 \leq 2\max(\Xi_1,\Xi_2,\Psi_0,\Psi_1,\Psi_2)$. Hence, $(N,\tau_1,\tau_2)\models A_{O,k}$.

Conversely, suppose $(N,\tau_1,\tau_2)\models A_{O,k}$ and that $a\in O^N$ and $b_0,b_1,b_2\in N^k$ are tuples satisfying the conditions (\ref{axiom:hyp}) for a given $\epsilon>0$. It follows that the tuples satisfy $\max(\Xi_1,\Xi_2,\Psi_0,\Psi_1,\Psi_2)<\epsilon$, and thus also $\inf_{c\in N^k}\Xi_0<2\epsilon$. This says precisely that there is $c\in N^k$ such that the conclusion (\ref{axiom:con}) holds, as desired.
\end{proof}
\begin{prop}\label{p:e.c.-models-are-in-E}
Every existentially closed model of $T_2$ belongs to $\mcE$.
\end{prop}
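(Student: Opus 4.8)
\emph{Plan.} I will show that any existentially closed $(N,\tau_1,\tau_2)\models T_2$ satisfies every instance of the defining condition of $\mcE$, by pulling the conclusion back from a suitable extension. Fix $\epsilon>0$, a $G$-orbit $O$, and finite tuples $a\in O^N$ and $b_0,b_1,b_2\in N^k$ satisfying \eqref{axiom:hyp}; I must produce $c\in N^k$ with \eqref{axiom:con}. The first step is a reduction: it suffices to build \emph{some} extension $(N',\tau_1',\tau_2')\models T_2$ of $(N,\tau_1,\tau_2)$ and a tuple $c\in N'^k$ with $b_0b_1b_2\eqstp_a^{2\epsilon-\delta}c\tau_1'(c)\tau_2'(c)$ for some $\delta>0$. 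Indeed, by Lemma~\ref{l:DOk-is-def} the predicate $D_{O,3k}$ is definable in $T$, and since $T$ is model complete (Assumption~\ref{assumption:main}) it is a uniform limit of existential $\mcL$-formulas; unwinding $D_{O,3k}$ and replacing $\tau_1,\tau_2$ by the function symbols $\sigma_1,\sigma_2$, the assertion ``$\exists c\ b_0b_1b_2\eqstp_a^{2\epsilon}c\tau_1(c)\tau_2(c)$'' becomes, up to arbitrarily small error, an existential $\mcL_2$-condition with parameters $a,b_0,b_1,b_2\in N$. Existential closedness of $(N,\tau_1,\tau_2)$ transfers such a condition down from $N'$, and the strict slack $\delta$ absorbs the approximation, yielding $c\in N^k$ with $b_0b_1b_2\eqstp_a^{2\epsilon}c\tau_1(c)\tau_2(c)$, i.e.\ \eqref{axiom:con}.

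The extension $(N',\tau_1',\tau_2')$ is produced in two steps. First I correct the approximate hypotheses \eqref{axiom:hyp} into exact ones, working inside $N$ (by $\aleph_0$-categoricity and exactness, all strong types over algebraic closures of finite tuples, their non-forking extensions, and the distances between them are realized in $N$). Using Remark~\ref{rem:eps-ind-without-stp}, stationarity of strong types, and the fact that taking the non-forking extension of a strong type is non-expansive for the metric on strong types, I find tuples $\hat b_0,\hat b_1,\hat b_2$ in $N$ with $\hat b_0\eqstp_a b_0$ such that the \emph{exact} conditions
\[\hat b_1\eqstp_{\tau_1(a)}\tau_1(\hat b_0),\quad \hat b_2\eqstp_{\tau_2(a)}\tau_2(\hat b_0),\quad \hat b_0\ind_a\tau_1(a)\tau_2(a),\quad \hat b_1\ind_{\tau_1(a)}a\tau_2(a),\quad \hat b_2\ind_{\tau_2(a)}a\tau_1(a)\]
hold, with $d(b_0,\hat b_0)<\epsilon$ and $d(b_i,\hat b_i)<2\epsilon$ for $i=1,2$; in particular $d(b_0b_1b_2,\hat b_0\hat b_1\hat b_2)<2\epsilon$, so $b_0b_1b_2\eqstp_a^{2\epsilon}\hat b_0\hat b_1\hat b_2$ with strict slack. (Concretely: the third clause of \eqref{axiom:hyp} produces $\hat b_0$; then the first clause, via $\hat b_0\eqstp_ab_0$, places $\tau_1(\hat b_0)$ at type-distance $<\epsilon$ from $b_1$, the fourth clause gives a realization of the non-forking extension of $\stp(b_1/\tau_1(a))$ within $\epsilon$ of $b_1$, and non-expansiveness lets me realize the non-forking extension $\hat b_1$ of $\stp(\tau_1(\hat b_0)/\tau_1(a))$ within $\epsilon$ of that realization; symmetrically for $\hat b_2$.)

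Second, given the exact configuration $\hat b_0,\hat b_1,\hat b_2$, I build an $\mcL_2$-extension $(N',\tau_1',\tau_2')\models T_2$ of $(N,\tau_1,\tau_2)$ containing a tuple $c$ with $c\tau_1'(c)\tau_2'(c)\eqstp_a\hat b_0\hat b_1\hat b_2$ exactly. Since the Cayley graph of $\mbF_2$ is a tree, this is a tree-amalgamation argument of the kind going back to Chatzidakis--Pillay and Kikyo--Pillay: working in a monster model of $T$, one adjoins the $\mbF_2$-orbit $(c_w)_{w\in\mbF_2}$ of a tuple $c:=c_e$, with $c_e$ realizing $\stp(\hat b_0/a)$, $c_{g_1}$ realizing $\stp(\hat b_1/\tau_1(a))$, $c_{g_2}$ realizing $\stp(\hat b_2/\tau_2(a))$, and every further $c_w$ obtained from its unique parent in the tree by pushing the type forward along the edge and then taking the non-forking extension over everything constructed so far; one then extends the maps $c_w\mapsto c_{g_iw}$ (together with $\tau_i$ on $N$) to $\mcL$-automorphisms $\tau_1',\tau_2'$ of the monster, and lets $N'$ be an elementary submodel containing $N$ and the orbit and closed under $\tau_1',\tau_2'$. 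The two strong-type identities above ensure that the prescribed types at $e,g_1,g_2$ are compatible with the action, and the three independence conditions are precisely the non-forking compatibility that makes the amalgamation consistent and forces $\stp(c_ec_{g_1}c_{g_2}/a)=\stp(\hat b_0\hat b_1\hat b_2/a)$. With $c:=c_e$ we then get $c\tau_1'(c)\tau_2'(c)=c_ec_{g_1}c_{g_2}\eqstp_a\hat b_0\hat b_1\hat b_2\eqstp_a^{2\epsilon}b_0b_1b_2$ in $N'$ with strict slack, which is what the reduction requires.

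The reduction and the correction step are routine bookkeeping. The main obstacle is the tree amalgamation: one must verify carefully that the prescribed local types glue into a global $\mbF_2$-configuration and that the resulting maps $\tau_1',\tau_2'$ are well-defined $\mcL$-automorphisms extending $\tau_1,\tau_2$. This is the step where superstability and stationarity of $T$ are genuinely used, and where the exact independence conditions of \eqref{axiom:hyp} are exactly the constraints needed to make the amalgamation go through.
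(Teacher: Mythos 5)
Your overall architecture is the same as the paper's: correct the approximate hypotheses \eqref{axiom:hyp} to exact ones inside $N$, build a single extension of $(N,\tau_1,\tau_2)$ containing an exact witness, and pull the witness back into $N$ using existential closedness together with the fact that $D_{O,3k}$ is (a uniform limit of) existential formulas by model completeness; the reduction and the correction step are fine. The gap is in the amalgamation step. You prescribe only the \emph{individual} strong types of $c_e,c_{g_1},c_{g_2}$ (namely $\stp(\hat b_0/a)$, $\stp(\hat b_1/\tau_1(a))$, $\stp(\hat b_2/\tau_2(a))$) and claim that the three independence conditions of \eqref{axiom:hyp} force $\stp(c_ec_{g_1}c_{g_2}/a)=\stp(\hat b_0\hat b_1\hat b_2/a)$. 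They do not: \eqref{axiom:hyp} only says that each $\hat b_i$ is independent from $a\tau_1(a)\tau_2(a)$ over $\tau_i(a)$; it imposes no constraint on the mutual dependence of $\hat b_0,\hat b_1,\hat b_2$ over $a\tau_1(a)\tau_2(a)$, and in the intended situation they are strongly correlated ($\hat b_1$ is meant to play the role of the image of $\hat b_0$ under the new automorphism, hence essentially a function of $\hat b_0$). Your construction, in which each new point realizes a non-forking extension over everything built so far, produces a \emph{mutually independent} triple $c_e,c_{g_1},c_{g_2}$, whose joint strong type is in general wrong. Concretely, in an atomless probability algebra with $\tau_1=\tau_2=\mathrm{id}$ and $b_0=b_1=b_2=b$ a single event of measure $1/2$, the hypotheses \eqref{axiom:hyp} hold with error $0$, the target joint type is that of the totally correlated triple $bbb$, but an independent triple $c\,\tau_1'(c)\,\tau_2'(c)$ is bounded away from it in joint distribution; so the witness your construction outputs does not satisfy \eqref{axiom:con} for small $\epsilon$, even though a correct witness exists.

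The repair is to start from the \emph{joint} type, and this also makes the whole tree amalgamation unnecessary: in a saturated, strongly homogeneous $M'\succeq N$, choose $c_0c_1c_2$ with $c_0c_1c_2\eqstp_{a\tau_1(a)\tau_2(a)}\hat b_0\hat b_1\hat b_2$ and $c_0c_1c_2\ind_{a\tau_1(a)\tau_2(a)}N$. Transitivity gives $c_0\ind_a N$, $c_1\ind_{\tau_1(a)}N$, $c_2\ind_{\tau_2(a)}N$, and since $\tp(c_i/\acl(\tau_i(a)))$ is the $\tau_i$-image of $\tp(c_0/\acl(a))$, stationarity yields $c_0m\equiv c_1\tau_1(m)\equiv c_2\tau_2(m)$ for an enumeration $m$ of $N$; hence $\tau_i\cup\{c_0\mapsto c_i\}$ is elementary and extends to an automorphism $\tau_i'$ of $M'$. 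The $\mbF_2$-action then comes for free because $\mbF_2$ is free, so there is no need to build the orbit $(c_w)_{w\in\mbF_2}$ coherently, and no need for superstability here --- that hypothesis is used only in the converse direction, Proposition~\ref{p:models-in-E-are-e.c.}.
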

\begin{proof}
Let $(M,\tau_1,\tau_2)\models T_2$ be existentially closed and let $a,b_0,b_1,b_2$ be tuples in $M$ satisfying the conditions (\ref{axiom:hyp}). The independence hypotheses say that there are tuples $b_0',b_1',b_2'$ in $M$ such that $d(b_0b_1b_2,b_0'b_1'b_2')<\epsilon$ and
\begin{equation*}
b_0'\eqstp_a b_0,\ \ b_0'\ind_a \tau_1(a)\tau_2(a),
\ \
b_1'\eqstp_{\tau_1(a)} b_1,\ \ b_1'\ind_{\tau_1(a)}a\tau_2(a),
\ \
b_2'\eqstp_{\tau_2(a)} b_2,\ \ b_2'\ind_{\tau_2(a)}a\tau_1(a).
\end{equation*}
Then $b_1'\eqstp_{\tau_1(a)}^\epsilon \tau_1(b_0')$, so there is an auxiliary $b_1''$ such that $d(b_1',b_1'')<\epsilon$ and $b_1''\eqstp_{\tau_1(a)} \tau_1(b_0')$. Take furthermore $b_1'''$ a realization of $\stp(b_1''/\tau_1(a)b_1'){\upharpoonright}^{\tau_1(a)b_1' a\tau_2(a)}$, i.e., such that $b_1'''\eqstp_{\tau_1(a)b_1'}b_1''$ and $b_1'''\ind_{\tau_1(a)b_1'}a\tau_2(a)$. Then $b_1'''\ind_{\tau_1(a)}a\tau_2(a)$ by transitivity of independence, and we also have $d(b_1''',b_1)\leq d(b_1''',b_1')+d(b_1',b_1)=d(b_1'',b_1')+d(b_1',b_1)<2\epsilon$. In other words, we may choose $b_1'$ appropriately (replacing it by $b_1'''$) so as to have
$$b_1'\eqstp_{\tau_1(a)} \tau_1(b_0'),\ \ b_1'\ind_{\tau_1(a)}a\tau_2(a),\ \ d(b_1',b_1)<2\epsilon.$$
Similarly, we can choose $b_2'$ so that it satisfies
$$b_2'\eqstp_{\tau_2(a)} \tau_2(b_0'),\ \ b_2'\ind_{\tau_2(a)}a\tau_1(a),\ \ d(b_2',b_2)<2\epsilon.$$

Now let $M'\succeq M$ be a sufficiently saturated and homogeneous model of $T$, and choose tuples $c_0,c_1,c_2$ in $M'$ such that
$$c_0c_1c_2 \eqstp_{a\tau_1(a)\tau_2(a)} b_0'b_1'b_2' \mbox{\ \ and\ \ } c_0c_1c_2\ind_{a\tau_1(a)\tau_2(a)} M.$$
By transitivity,
\begin{equation}\label{eq:c0-idep-a-M-etc}
c_0\ind_a M,\ \ c_1\ind_{\tau_1(a)}M, \mbox{\ \ and\ \ }c_2\ind_{\tau_2(a)}M.
\end{equation}
On the other hand, if $a'$ is an enumeration of $\acl(a)$ and $m$ is an enumeration of $M$, then $c_0a'\equiv b_0'a'\equiv \tau_1(b_0')\tau_1(a')\equiv b_1'\tau_1(a') \equiv c_1\tau_1(a')$, and similarly for $c_2$ and $\tau_2$. Thus
$$c_0a'\equiv c_1\tau_1(a')\equiv c_2\tau_2(a'),\ \ a'm\equiv \tau_1(a')\tau_1(m) \equiv \tau_2(a')\tau_2(m),$$
so by stationarity of independence and (\ref{eq:c0-idep-a-M-etc}), we obtain that
$$c_0m\equiv c_1\tau_1(m)\equiv c_2\tau_2(m).$$
Hence we can extend $\tau_1,\tau_2$ to automorphisms $\tau_1',\tau_2'$ of $M'$ in such a way that
$$\tau_1'(c_0)=c_1 \mbox{\ \ and\ \ } \tau_2'(c_0)=c_2.$$
Now, since $c_0c_1c_2\eqstp_a b_0'b_1'b_2'$ and $d(b_0'b_1'b_2',b_0b_1b_2)<2\epsilon$, we have that
$$b_0b_1b_2\eqstp_a^{2\epsilon} c_0\tau_1'(c_0)\tau_2'(c_0).$$
To summarize, we have an extension $(M',\tau_1',\tau_2')\supseteq (M,\tau_1,\tau_2)$ that is a model of $T_2$ and has a witness $c=c_0$ for the conclusion (\ref{axiom:con}) in the property defining $\mcE$. In other words, for the appropriate $O$ and $k$,
\begin{equation}\label{eq:M'12-models-DOk}
(M',\tau_1',\tau_2')\models \inf_c D_{O,k}(a,b_0b_1b_2,c\sigma_1(c)\sigma_2(c)) < 2\epsilon.
\end{equation}
Since $T$ is model complete, the $\mcL$-predicate $D_{O,k}$ is existential. Thus, the $\mcL_2$-condition in (\ref{eq:M'12-models-DOk}) is existential. As $(M,\tau_1,\tau_2)$ is existentially closed, we can find the desired witness $c$ already in~$M$.
\end{proof}

Let us write $b\equiv^\epsilon c$ as short for $d(\tp(b),\tp(c))<\epsilon$. The following equivalence is true for any model complete, $\aleph_0$-categorical theory $T$.

\begin{lem}\label{l:exist-closedness}
Let $(M,\tau_1,\tau_2)\subseteq (M',\tau_1',\tau_2')$ be an extension of models of $T_2$. Then $(M,\tau_1,\tau_2)$ is existentially closed in $(M',\tau_1',\tau_2')$ if and only if for every $\epsilon>0$ and every finite tuples $b$ in $M'$ and $a$ in $M$ there is a tuple $c$ in $M$ such that
$$ab\tau_1'(b)\tau_2'(b)\equiv^\epsilon ac\tau_1(c)\tau_2(c).$$
\end{lem}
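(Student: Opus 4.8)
The plan is to prove the two directions of the equivalence, with the forward direction being essentially immediate and the reverse direction requiring the model-completeness of $T$ to convert quantifier-free $\mcL_2$-formulas into a form controlled by full $\mcL$-types.

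For the forward direction, suppose $(M,\tau_1,\tau_2)$ is existentially closed in $(M',\tau_1',\tau_2')$. Fix $\epsilon>0$ and finite tuples $b$ in $M'$ and $a$ in $M$. Consider the $\mcL$-tuple $ab\tau_1'(b)\tau_2'(b)$, where the last two blocks are the images of $b$ under the automorphisms of $M'$; since the $\sigma_i$ are function symbols of $\mcL_2$, for any $\mcL$-formula $\theta$ the value $\theta^{M'}(a,b,\tau_1'(b),\tau_2'(b))$ equals the value of the quantifier-free $\mcL_2$-formula $\theta(a,b,\sigma_1(b),\sigma_2(b))$ at $(a,b)$. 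By $\aleph_0$-categoricity of $T$, the type of a tuple is determined (to within arbitrarily small error) by finitely many $\mcL$-formulas up to a uniform modulus; applying existential closedness to this finite set of quantifier-free $\mcL_2$-formulas in the variables of $b$ (with $a$ as parameters from $M$), we obtain $c$ in $M$ with $\theta(a,c,\sigma_1(c),\sigma_2(c))$ close to $\theta(a,b,\sigma_1(b),\sigma_2(b))$ for all the chosen formulas, i.e.\ $ac\tau_1(c)\tau_2(c)\equiv^\epsilon ab\tau_1'(b)\tau_2'(b)$. One should phrase this cleanly using the fact that by Ryll--Nardzewski the $\mcL$-type of a tuple over $\emptyset$ in a model of $T$ is an $\emptyset$-definable predicate of the tuple, so matching this single definable predicate (uniformly continuous) to within a suitable $\delta$ yields the desired $\epsilon$-closeness of types.

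For the reverse direction, assume the type condition and let $\varphi(x,y)$ be a quantifier-free $\mcL_2$-formula, $a$ in $M$, $b$ in $M'$, and $\epsilon>0$ be given. Because the only $\mcL_2$-function symbols are $\sigma_1,\sigma_2$ (and their inverses), any $\mcL_2$-term in the variables $x,y$ reduces, after applying the automorphisms, to an $\mcL$-term evaluated at the finitely many translates of $x$ and $y$ by words in $\tau_1^{\pm1},\tau_2^{\pm1}$ that appear in $\varphi$. Thus $\varphi^{M'}(a,b)$ is an $\mcL$-formula evaluated at a tuple of the form $(w(a),w(b))_{w}$ for finitely many group elements $w$; since $a\in M$ and $\tau_i$ restricts to $\tau_i'$ on $M$, the translates $w(a)$ lie in $M$, and we can absorb them into the parameter tuple. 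So it suffices to match the $\mcL$-type of the finitely many translates $(w(b))_w$ over the parameters $(w(a))_w$. Here model-completeness of $T$ enters: it is enough to match a quantifier-free — equivalently (by Ryll--Nardzewski plus model-completeness making every $\mcL$-formula equivalent to an existential one, hence the $\mcL$-type being determined by the existential/universal $\mcL$-formulas) — an $\mcL$-type over those parameters, and by the homomorphism property $\tau_i(w(b)) = (\tau_i w)(b)$ the collection of all these translates is exactly (the relevant coordinates of) $b\tau_1'(b)\tau_2'(b)$ together with higher translates, which are themselves translates of these three blocks; iterating, everything is an $\mcL$-function of $ab\tau_1'(b)\tau_2'(b)$ and its images, so a single application of the hypothesis to a sufficiently large tuple $b$ (enlarged to contain all needed translates) and then matching the $\mcL$-type to within a $\delta$ dictated by the uniform continuity of the defining predicate of $\varphi$ produces $c\in M$ with $|\varphi^{M'}(a,b)-\varphi^{M}(a,c)|<\epsilon$.

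The main obstacle is the bookkeeping in the reverse direction: one must argue that matching the $\mcL$-type of $ab\tau_1'(b)\tau_2'(b)$ actually controls $\varphi^{M'}(a,b)$ for an \emph{arbitrary} quantifier-free $\mcL_2$-formula $\varphi$, which involves translates of $b$ by words of unbounded length, not just by $\tau_1,\tau_2$. The resolution is that $\tau_i'$ is an $\mcL$-automorphism of $M'$ (and likewise $\tau_i$ of $M$): once we know $c\tau_1(c)\tau_2(c) \equiv^{\delta}_{\text{over }a\text{-translates}} b\tau_1'(b)\tau_2'(b)$ in the $\mcL$-sense, applying $\tau_i$ on both sides — an $\mcL$-isometry — propagates the closeness to all further translates, because $\tau_i'(\tau_j'(b)) = (\tau_i'\tau_j')(b)$ and the value of any $\mcL$-formula is preserved under the $\mcL$-automorphism; so no separate hypothesis about longer words is needed. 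Thus the whole argument rests on the interplay of three ingredients already available: $\aleph_0$-categoricity (types are definable predicates, so ``$\equiv^\epsilon$'' is the right currency), model-completeness of $T$ (so that only the $\mcL$-type, and not genuine $\mcL_2$-quantifiers over $M'$, needs to be matched), and the elementary fact that $\sigma_i$ are interpreted by automorphisms, which lets us push every $\mcL_2$-term down to an $\mcL$-term on translates.
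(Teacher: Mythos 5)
Your forward direction is essentially the paper's argument: the distance to $\tp(ab\tau_1'(b)\tau_2'(b))$ is a definable predicate, and matching its value via existential closedness gives the claim. One small repair: existential closedness, as defined, only controls quantifier-free (hence existential) $\mcL_2$-formulas, so you cannot simply "match a definable predicate" unless you first invoke model completeness of $T$ to make that predicate (approximately) existential; your phrase "$\theta(a,b,\sigma_1(b),\sigma_2(b))$ is a quantifier-free $\mcL_2$-formula" is only true for quantifier-free $\theta$, and quantifier-free $\mcL$-formulas do not in general pin down the type.

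The reverse direction has a genuine gap. You correctly identify the obstacle (a quantifier-free $\mcL_2$-formula involves translates $wb$ for words $w$ of length greater than $1$, while the hypothesis only mentions $b\tau_1'(b)\tau_2'(b)$), and you correctly suggest applying the hypothesis to an enlarged tuple containing all needed translates. But the resolution you actually give --- "applying $\tau_i$ on both sides propagates the closeness to all further translates" --- does not work, and it also makes the enlargement superfluous in your write-up. Applying $\tau_i'$ to one side and $\tau_i$ to the other preserves each type separately, so it only shows that the type of each translated block $\tau_i(c)\tau_i\tau_1(c)\tau_i\tau_2(c)$ is close to that of the corresponding block for $b$; it gives no control of the \emph{joint} type of $(wb)_{|w|\le n}$ versus $(wc)_{|w|\le n}$, and closeness of types of subtuples does not imply closeness of the type of the concatenated tuple --- yet it is the joint type that governs $\varphi^{M'}(a,b)$ versus $\varphi^{M}(a,c)$. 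Moreover, even when you do apply the hypothesis to the enlarged tuple $\tilde b=\Delta_n b$, what you get is an abstract tuple $\tilde c=(c_w)_{w\in\Delta_n}$ in $M$ whose coordinates are \emph{not} translates of any single element, whereas $\varphi^{M}(a,c)$ must be evaluated at the genuine translates $wc$ under $\tau_1,\tau_2$. The missing step --- which is the actual content of the paper's proof --- is to extract from the approximate type equality the coordinate identities $wb=\tau_i'(w'b)$ for $w=\sigma_i w'$ (this element occurs once as a coordinate of $\tilde b$ and once as a coordinate of $\tau_i'(\tilde b)$), deduce $d(c_w,\tau_i(c_{w'}))<\epsilon/2n$, and then telescope along the letters of $w$ (using that $\tau_i$ is an isometry) to obtain $d(c_w,wc_\emptyset)<\epsilon/2$; only then does setting $c=c_\emptyset$ yield $a\Delta_n b\equiv^{\epsilon}a\Delta_n c$, and this is why the hypothesis must be invoked with the length-dependent error $\epsilon/4n$. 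Your proposal never relates $c_w$ to $wc_\emptyset$, so it never produces the single witness $c$, and the automorphism-propagation argument offered in its place is not valid.
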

\begin{proof}
The left-to-right implication follows from existential definability of the distance function to $\tp(ab\tau'_1(b)\tau'_2(b))$. The converse can be seen easily by rewriting any existential $\mcL_2$-condition as an equivalent existential $\mcL_2$-condition using a larger set of variables and involving only the function symbols $\sigma_1,\sigma_2$ and no concatenation thereof. For a formal argument, let $\Sigma$ be the monoid of words in the letters $\{\sigma_1,\sigma_2,\sigma_{-1},\sigma_{-2}\}$, and let us consider its action $\Sigma\actson M'$ where $\sigma_1$ acts as $\tau_1'$, $\sigma_2$ acts as $\tau_2'$, $\sigma_{-1}$ acts as ${\tau_1'}\inv$, $\sigma_{-2}$ acts as ${\tau_2'}\inv$, and the empty word acts as the identity. Given a finite subset $\Delta\subseteq\Sigma$ and a tuple $b$ from $M'$, we let $\Delta b$ denote the tuple $(wb)_{w\in\Delta}$. To show that $(M,\tau_1,\tau_2)$ is existentially closed in $(M',\tau_1',\tau_2')$ it is enough to show that for every $\epsilon>0$, every finite subset $\Delta\subseteq\Sigma$ and every finite tuples $b$ in $M'$ and $a$ in $M$ there is a tuple $c$ in $M$ such that $a\Delta b\equiv^\epsilon a\Delta c$. Moreover, if we let $\Delta_n\subseteq\Sigma$ be the set of all words of length less than or equal to $n\in\mbN$, we may restrict our attention to these finite sets.

Let $n\geq 1$ be arbitrary, and let $b$ and $a$ be finite tuples from $M'$ and $M$, respectively. Let also $\tilde b=\Delta_n b$. Given $\epsilon>0$, we know by hypothesis that there is $\tilde c=(c_w)_{w\in\Delta_n}$ in $M$ such that
\begin{equation}\label{eq:bbb-eqstp-ccc}
a\tilde b\tau_1'(\tilde b)\tau_2'(\tilde b) \equiv^{\epsilon/4n} a\tilde c\tau_1(\tilde c)\tau_2(\tilde c).
\end{equation}
Take a non-empty word $w\in\Delta_n$. Suppose first that $w=\sigma_i w'$ with $i\in\{1,2\}$. Then we have, trivially,
\begin{equation*}
wb=\tau_i'(w'b),\ \ wb\subseteq \tilde b,\ \ \tau_i'(w'b)\subseteq\tau_i'(\tilde b),
\end{equation*}
and these conditions together with (\ref{eq:bbb-eqstp-ccc}) imply readily that $d(c_w,\tau_i(c_{w'}))<\epsilon/2n$. If we suppose, instead, that $w=\sigma_{-i} w'$ with $i\in\{1,2\}$, then
\begin{equation*}
w'b=\tau_i'(wb),\ \ w'b\subseteq \tilde b,\ \ \tau_i'(wb)\subseteq\tau_i'(\tilde b),
\end{equation*}
and these together with (\ref{eq:bbb-eqstp-ccc}) imply that $d(c_w,\tau_i\inv(c_{w'}))=d(c_{w'},\tau_i(c_w))<\epsilon/2n$.

Letting $c=c_\emptyset$, we deduce that
$$d(c_w,wc)<\epsilon/2$$
for each $w\in\Delta_n$. We conclude that $a\tilde b\equiv^{\epsilon/2} a (c_w)_{w\in\Delta_n}\equiv^{\epsilon/2} a\Delta_n c$, and hence that $a\Delta_n b\equiv^\epsilon a\Delta_n c$, as desired.
\end{proof}

\begin{prop}\label{p:models-in-E-are-e.c.}
Every model in $\mcE$ is an existentially closed model of $T_2$.
\end{prop}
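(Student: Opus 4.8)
The plan is to verify, for an arbitrary extension $(M,\tau_1,\tau_2)\subseteq(M',\tau_1',\tau_2')$ of models of $T_2$, the criterion of Lemma~\ref{l:exist-closedness}: it suffices, given $\epsilon>0$ and finite tuples $b$ in $M'$ and $a$ in $M$, to find $c$ in $M$ with $ab\tau_1'(b)\tau_2'(b)\equiv^{\epsilon}ac\tau_1(c)\tau_2(c)$. I would work inside a monster model $\mathfrak M\succeq M'$ of $T$ (so $M\preceq M'\preceq\mathfrak M$ by model completeness) and extend $\tau_1',\tau_2'$ to automorphisms of $\mathfrak M$, still denoted $\tau_1',\tau_2'$; these fix $M$ setwise and restrict to $\tau_1,\tau_2$ there, hence send every finite $\bar a\subseteq M$ to $\tau_1(\bar a),\tau_2(\bar a)\subseteq M$.

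First I would choose the base by superstability: for a small $\eta>0$ there is a finite $A_0\subseteq M$ with $b^{\eta}\ind_{A_0}M$, and, setting $\bar a\coloneqq A_0\cup a$, one gets $b^{2\eta}\ind_{\bar a}M$ by Remark~\ref{rem:eps-ind-without-stp} and base monotonicity, and hence $\tau_1'(b)^{2\eta}\ind_{\tau_1(\bar a)}M$ and $\tau_2'(b)^{2\eta}\ind_{\tau_2(\bar a)}M$ upon applying $\tau_1',\tau_2'$. Reading these independences in $(\mathfrak M,\tau_1',\tau_2')\models T_2$, together with the trivial equalities $\tau_i'(b)\eqstp_{\tau_i(\bar a)}\tau_i'(b)$, one sees that $b,\tau_1'(b),\tau_2'(b)$ satisfy condition~\eqref{axiom:hyp} over the base $\bar a$ with error $2\eta$: the configuration we want to realize inside $M$ is already present, over a finite base, in $\mathfrak M$.

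The main step is to transport this configuration into $M$. Put $\bar a'\coloneqq\bar a\,\tau_1(\bar a)\,\tau_2(\bar a)$, a finite tuple from $M$; adjoining $\tau_1(\bar a)$ and $\tau_2(\bar a)$ guarantees that an automorphism fixing $\acl(\bar a')$ fixes all the bases occurring in~\eqref{axiom:hyp}. By exactness $(M^{\meq},\acl(\bar a'))$ is $\aleph_0$-categorical, so, as in the proof of Lemma~\ref{l:DOk-is-def}, for each strong type $p$ over $\bar a'$ the function $x\mapsto d(x,p)$ is a definable predicate with infimum $0$; hence, for any prescribed $\delta>0$, there are tuples $b_0,b_1,b_2$ in $M$ (of the length of $b$) admitting a realization $(\beta_0,\beta_1,\beta_2)$ of $\stp\bigl((b\,\tau_1'(b)\,\tau_2'(b))/\bar a'\bigr)$ with $d\bigl((b_0,b_1,b_2),(\beta_0,\beta_1,\beta_2)\bigr)<\delta$. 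Since $\tau_1,\tau_2$ act isometrically on $M$ and the predicates $D_{O,k},I_{O,k,l}$ of Lemmas~\ref{l:DOk-is-def} and~\ref{l:IOkl-is-def} are definable (hence uniformly continuous) and retain in $M$ their intended meaning as distances between strong types and to non-forking extensions, a triangle-inequality estimate---the one in the proof of Proposition~\ref{p:e.c.-models-are-in-E}, read backwards, which goes through precisely because of the choice of $\bar a'$---shows that $b_0,b_1,b_2$ satisfy~\eqref{axiom:hyp} over $\bar a$ with some error $\epsilon'$ that can be made arbitrarily small: first take $\eta$ small, then, once $\bar a$ and its orbit $O$ are known, take $\delta$ small relative to the moduli of continuity of $D_{O,k}$ and $I_{O,k,l}$.

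Finally, since $(M,\tau_1,\tau_2)\in\mcE$, the defining property of $\mcE$ applied to $\bar a,b_0,b_1,b_2$ gives $c$ in $M$ with $b_0b_1b_2\eqstp_{\bar a}^{2\epsilon'}c\tau_1(c)\tau_2(c)$, whence $ab_0b_1b_2\equiv^{2\epsilon'}ac\tau_1(c)\tau_2(c)$; and projecting the approximation above to the subtuple $a$ of $\bar a'$ (using that $(\beta_0,\beta_1,\beta_2)$ has the same type over $a$ as $(b,\tau_1'(b),\tau_2'(b))$) gives $ab_0b_1b_2\equiv^{\delta}ab\tau_1'(b)\tau_2'(b)$. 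The triangle inequality on the type space then yields $ab\tau_1'(b)\tau_2'(b)\equiv^{\delta+2\epsilon'}ac\tau_1(c)\tau_2(c)$, which is $<\epsilon$ provided $\eta$ and $\delta$ were taken small enough. This is the witness demanded by Lemma~\ref{l:exist-closedness}, so $(M,\tau_1,\tau_2)$ is existentially closed. The delicate point is the transport of the third paragraph---establishing that in an arbitrary (possibly non-separable) model of an exact $\aleph_0$-categorical theory the strong types over a finite set are realized $d$-densely, and then propagating the $\delta$-perturbation through the definable predicates $D_{O,k},I_{O,k,l}$ and the isometries $\tau_1,\tau_2$ so that~\eqref{axiom:hyp} stays approximately satisfied; the rest is an appeal to superstability, to Lemma~\ref{l:exist-closedness}, and to the definition of $\mcE$, glued by a bookkeeping of error terms.
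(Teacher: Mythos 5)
Your argument is correct and follows essentially the same route as the paper's proof: reduce via Lemma~\ref{l:exist-closedness}, use superstability to obtain a finite base $\bar a$ over which $b,\tau_1'(b),\tau_2'(b)$ are approximately independent from everything relevant (you take independence from all of $M$, the paper from the orbit of $a$ under the group generated by $\tau_1',\tau_2'$ --- the same trick, since both sets are invariant under the automorphisms), transfer the triple into $M$ over $\bar a\,\tau_1(\bar a)\,\tau_2(\bar a)$, and apply the defining property of $\mcE$. The only substantive difference is that you realize $\stp\bigl(b\,\tau_1'(b)\,\tau_2'(b)/\bar a\,\tau_1(\bar a)\,\tau_2(\bar a)\bigr)$ in $M$ only $\delta$-approximately and then propagate the error, whereas exactness lets one realize it \emph{exactly} in $M$ (since $M\preceq M'$ and the expansion by these parameters is $\aleph_0$-categorical), which is what the paper does and which eliminates the $\delta$-bookkeeping; your sketched bookkeeping is nonetheless sound.
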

\begin{proof}
Let $(M,\tau_1,\tau_2)\in\mcE$ and let $(M,\tau_1,\tau_2)\subseteq (M',\tau_1',\tau_2')$ be an extension of models of $T_2$. In particular $M\preceq M'$ as $\mcL$-structures, because $T$ is model complete. Using the previous lemma, it is enough to show that for every $\epsilon>0$ and every finite tuples $b$ in $M'$ and $a$ in $M$ there is a tuple $c$ in $M$ such that $b\tau_1'(b)\tau_2'(b)\eqstp_a^\epsilon c\tau_1(c)\tau_2(c)$.

Let $\Gamma$ be the group generated by $\tau_1',\tau_2'$ inside $\Aut(M')$. By superstability, there is a finite subset $\Delta\subseteq\Gamma$ such that $b^{\epsilon/2} \ind_{\Delta a}\Gamma a$, and we may assume that $\Delta$ contains the identity element. Let $\tilde a=\Delta a$. Then we have
$$b^{\epsilon/2}\ind_{\tilde a}\tau_1(\tilde a)\tau_2(\tilde a),\ \ \tau_1'(b)^{\epsilon/2} \ind_{\tau_1(\tilde a)}\tilde a\tau_2(\tilde a),\ \ \tau_2'(b)^{\epsilon/2}\ind_{\tau_2(\tilde a)}\tilde a\tau_1(\tilde a).$$
Since $M\preceq M'$ and the expansion $(M,\tilde a\tau_1(\tilde a)\tau_2(\tilde a))$ is $\aleph_0$-categorical, there are $b_0,b_1,b_2$ in $M$ such that
$$b_0b_1b_2\eqstp_{\tilde a\tau_1(\tilde a)\tau_2(\tilde a)}b\tau_1'(b)\tau_2'(b).$$
Hence we have
$$b_1 \eqstp_{\tau_1(\tilde a)} \tau_1(b_0),\ \ b_2\eqstp_{\tau_2(\tilde a)} \tau_2(b_0),\ 
\ b_0^{\epsilon/2}\ind_{\tilde a} \tau_1(\tilde a)\tau_2(\tilde a),
\ \ b_1^{\epsilon/2} \ind_{\tau_1(\tilde a)} \tilde a\tau_2(\tilde a),\ \ b_2^{\epsilon/2}\ind_{\tau_2(\tilde a)} \tilde a\tau_1(\tilde a),$$
which falls within the hypotheses (\ref{axiom:hyp}) of the property defining $\mcE$. Thus we can find $c$ in $M$ such that
$$c\tau_1(c)\tau_2(c)\eqstp_{\tilde a}^\epsilon b_0b_1b_2 \eqstp_{\tilde a}^{} b\tau_1'(b)\tau_2'(b).$$
In particular, $c\tau_1(c)\tau_2(c)\eqstp_a^\epsilon b\tau_1'(b)\tau_2'(b)$, as we wanted.
\end{proof}

\begin{rem}
The hypothesis that the stable independence relation is continuously symmetric was used only for the axiomatization in Proposition~\ref{p:E-is-elementary}, but not for proving that $\mcE$ consists precisely of the existentially closed models of $T_2$.
\end{rem}

We have thus proved that the theory $T_2$ has a model completion. More generally, if we consider $T_k$ (defined in the obvious way) for finite $k$, we may modify the conditions (\ref{axiom:hyp}) in Definition~\ref{d:class of ec models} to have
$$b_i \eqstp_{\tau_i(a)}^\epsilon \tau_i(b_0)\text{  and }\ b_i^\epsilon\ind_{\tau_i(a)} \{\tau_j(a):j\in\{0,\dots,k\}\setminus\{i\}\}$$
for every $i=0,\dots,k$, with the convention that $\tau_0$ is the identity; then change the condition (\ref{axiom:con}) into
$$b_0b_1\dots b_k \eqstp_a^{2\epsilon} c\tau_1(c)\dots\tau_k(c).$$
With the obvious adaptations, the same arguments as above show that the corresponding class $\mcE_k$ of $\mcL_k$-structures is axiomatizable and consists precisely of the existentially closed models of $T_k$.

We can thus conclude the following, always under Assumption~\ref{assumption:main}:

\begin{theorem}\label{thm:main} For every $k\leq\omega$, the theory $T_k$ of $\mbF_k$-actions on models of $T$ has a model completion.
\end{theorem}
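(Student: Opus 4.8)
The plan is to combine the three Propositions just established with two standard facts about inductive theories, both of which go through verbatim in continuous logic. The first fact is that an inductive theory $T_2$ admits a model companion if and only if its class of existentially closed models is elementary, and when this happens the model companion is exactly the common theory of that class (see, e.g., \cite[Thm.~8.3.6]{hodgesMT}, whose proof works as well in the continuous setting). So the task reduces to producing an elementary class that coincides with the e.c.\ models of $T_2$.

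That class is $\mcE$. Indeed, Proposition~\ref{p:E-is-elementary} shows $\mcE$ is elementary---this is the substantive input, and it is the only place where the continuous-symmetry part of Assumption~\ref{assumption:main} is used, via the definability of the predicates $I_{O,k,l}$ from Lemma~\ref{l:IOkl-is-def}. Proposition~\ref{p:e.c.-models-are-in-E} shows that every existentially closed model of $T_2$ lies in $\mcE$, and Proposition~\ref{p:models-in-E-are-e.c.} shows the converse. Together they give that $\mcE$ is precisely the class of e.c.\ models of $T_2$, and we may set $T_2^*\coloneqq\Th(\mcE)$; by the first fact this is a model companion of $T_2$.

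The second standard fact I would invoke is that a model companion of a theory with the amalgamation property is automatically a model completion. Using the Proposition at the start of this section (that $T_2$ is inductive and has amalgamation), the argument is the usual one: given $A\models T_2$, any two models of $T_2^*$ extending $A$ can be amalgamated over $A$ into a model of $T_2$, hence into a model of $T_2^*$ (every model of $T_2$ embeds into one); since $T_2^*$ is model complete, this amalgam is an elementary extension of each of the two, so the two models of $T_2^*$ together with the diagram of $A$ are elementarily equivalent over $A$. Thus $T_2^*$ together with the diagram of any $A\models T_2$ is complete, i.e.\ $T_2^*$ is a model completion.

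The main obstacle is not in this final assembly, which is essentially bookkeeping, but was already dealt with in the earlier Propositions: the genuinely hard point is the axiomatizability of $\mcE$ in Proposition~\ref{p:E-is-elementary}, which first required making sense of---and proving definable---the distance-to-the-nonforking-extension predicates, and this is exactly why the continuous-symmetry hypothesis had to be built into Assumption~\ref{assumption:main}. The only thing one must be careful about in the present step is that the two cited ``classical'' facts about inductive theories and model completions really do hold in continuous logic; this is the case, and it suffices to cite the continuous-logic versions.
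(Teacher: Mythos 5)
Your proposal is correct and follows exactly the paper's route: the theorem is obtained by combining Propositions~\ref{p:E-is-elementary}, \ref{p:e.c.-models-are-in-E}, and \ref{p:models-in-E-are-e.c.} with the standard facts (already recorded in the paper, with the same reference to Hodges) that an inductive theory has a model companion iff its e.c.\ models form an elementary class, and that amalgamation for $T_2$ upgrades the model companion to a model completion. Nothing essential differs from the paper's own assembly of these ingredients.
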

\begin{proof}
For the case $k=\omega$, it suffices to observe that an $\mbF_\omega$-action on a model $M \models T$ is e.c.\ if and only if its restriction to the first $k$ generators gives an e.c.\ $\mbF_k$-action (on $M$) for every finite $k$.  The right-to-left direction is immediate, since each formula involves only finitely many symbols. For the left-to-right direction suppose we have an e.c.\ action of $\mbF_\omega$ on a model $M \models T$, with the generators of $\mbF_\omega$ corresponding to automorphisms $(\tau_i \mid i \geq 1)$ of $M$.  Fix $k \geq 1$.  We want to show that the action of $\mbF_k$ on  $M$ determined by $(\tau_1,\dots,\tau_k)$ is e.c..  Let $M' \models T$ contain $M$ and suppose $(\tau'_1,\dots,\tau'_k)$ are automorphisms of $M'$ that extend $(\tau_1,\dots,\tau_k)$. Since $M\preceq M'$ as $\mcL$-structures, we may extend $M'$ to a highly homogeneous $N \models T$ such that any automorphism of $M'$ or of $M$ can be extended to an automorphism of $N$. Therefore we may obtain automorphisms $(\tilde\tau_i \mid i \geq 1)$ of $N$ such that $\tilde\tau_i$ extends $\tau'_i$ for $1 \leq i \leq k$ and $\tilde\tau_i$ extends $\tau_i$ for $k < i$.  This gives us an $\mbF_\omega$-action on $N$ that extends the given one on $M$. It follows that any existential condition that holds in $(M',\tau'_1,\dots,\tau'_k)$ must hold exactly in $(N,\tilde\tau_1,\dots,\tilde\tau_k)$ and thus holds approximately in $(M,\tau_1,\dots,\tau_k)$.
\end{proof}

As is the case for the model companion of any companionable theory, the model completion $T_k^*$ of $T_k$ is complete if and only if any two models of $T_k$ can be jointly embedded into a third one, and $T_k^*$ has quantifier elimination if and only if the universal part of $T_k$---denoted by $(T_k)_\forall$, and whose models are precisely the substructures of models of $T_k$---has the amalgamation property. In our case, quantifier elimination implies, in turn, that $T_k^*$ is stable. Moreover, the stable independence relation in models of $T_k^*$, as well as the algebraic closure operator, can be easily described in terms of the corresponding relation and operator in models of $T$. We note these facts:

\begin{cor}\label{cor:qe} Let $T_k^*$ be the model completion of $T_k$, for any $k\leq\omega$.
\begin{itemize}[wide]
\item If $T_k$ has the joint embedding property, then $T_k^*$ is complete.

\item If $(T_k)_\forall$ has the amalgamation property, then $T_k^*$ has quantifier elimination and is stable. Moreover, in this case, for any model $(M,\bar\tau)\models T_k^*$ and subsets $A,B,C\subseteq M$, if we denote by $\langle S\rangle$ the $\mcL_k$-substructure generated by $S\subseteq M$, we have:
\begin{itemize}[wide=20pt]
\item $A$ is independent from $B$ over $C$ in the sense of $(M,\bar\tau)$ if and only if $\langle A\rangle\ind_{\langle C\rangle}\langle B\rangle$, i.e., iff the generated $\mbF_k$-actions are independent in the sense of the reduct $M$.
\item The (real) algebraic closure of $A$ in $(M,\bar\tau)$ is precisely $\acl(\langle A\rangle)\cap M$.
\end{itemize}
\end{itemize}
\end{cor}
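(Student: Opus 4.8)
The plan is to verify each bullet point of Corollary~\ref{cor:qe} using the characterization of the existentially closed models provided by $\mcE$ (Propositions~\ref{p:e.c.-models-are-in-E}, \ref{p:models-in-E-are-e.c.}) together with standard facts about model companions and quantifier elimination in continuous logic. The first bullet is immediate: for a companionable inductive theory, the model companion is complete precisely when any two models embed into a common one; since every model of $T_2$ embeds into a model of $T_2^*$, the joint embedding property for $T_2$ transfers to $T_2^*$, giving completeness. For the second bullet, I would recall the general principle (the continuous analogue of \cite[Thm.~8.3.6]{hodgesMT} together with the standard amalgamation–QE correspondence): the model companion of an inductive theory $T_2$ has quantifier elimination if and only if $(T_2)_\forall$ has the amalgamation property. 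Since $T_2^*$ already extends $T_2$ and has the same universal consequences, QE for $T_2^*$ follows once amalgamation for $(T_2)_\forall$ is assumed. Stability of $T_2^*$ is then a formal consequence: a quantifier-free $\mcL_2$-formula is a Boolean combination of instances of $\mcL$-formulas applied to the $\sigma$-translates of the variables, hence stable because $T$ is stable (stability is preserved under this kind of substitution, and under Boolean combinations and the closure to definable predicates); with QE, all formulas are stable, so $T_2^*$ is stable.

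Next I would identify the stable independence relation. Working in a monster model $(M,\tau_1,\tau_2)\models T_2^*$ and given $A,B,C\subseteq M$, the $\mcL_2$-substructures $\langle A\rangle,\langle B\rangle,\langle C\rangle$ are exactly the $\mbF_2$-subsystems generated by $A,B,C$ (the orbits under $\tau_1,\tau_2$ and their inverses). Since the reduct $M\models T$ is stable, the relation $\langle A\rangle\ind_{\langle C\rangle}\langle B\rangle$ (non-forking in the sense of $T$) is an automorphism-invariant ternary relation satisfying all the axioms of a stable independence relation over $(M,\tau_1,\tau_2)$: symmetry, monotonicity, transitivity, finite character and local character are inherited from $T$, and the key point is stationarity over algebraically closed sets, which one checks by noting that an $\mcL_2$-automorphism of the monster is in particular an $\mcL$-automorphism respecting $\tau_1,\tau_2$, so that two non-forking extensions (in the sense of $T$) of the type of $\langle A\rangle$ over $\langle C\rangle$ to $\langle BC\rangle$ that agree as $\mcL$-strong types can be conjugated by such an automorphism. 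By uniqueness of the stable independence relation in a stable theory (here $T_2^*$, using QE so that $\mcL_2$-types over generated substructures are determined by $\mcL$-types), this relation must coincide with non-forking in $(M,\tau_1,\tau_2)$. The algebraic closure statement is handled similarly: an element of $M$ is in the $\mcL_2$-algebraic closure of $A$ iff it lies in an $\mcL_2$-definable-over-$A$ set that is finite (i.e., compact of finite size / has finitely many realizations); by QE such a set is $\mcL$-definable over $\langle A\rangle$, so the $\mcL_2$-algebraic closure of $A$ equals $\acl(\langle A\rangle)\cap M$, where $\acl$ is taken in the reduct. The reverse inclusion—that everything in $\acl(\langle A\rangle)\cap M$ is $\mcL_2$-algebraic over $A$—is clear since $\langle A\rangle$ is $\mcL_2$-definable over $A$ and $\mcL$-algebraicity is a special case of $\mcL_2$-algebraicity.

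The main obstacle I expect is verifying stationarity of the proposed independence relation over $\mcL_2$-algebraically closed sets with enough care, and more precisely reconciling $\mcL$-strong types with $\mcL_2$-strong types: one must argue that if $\langle C\rangle$ is its own $\mcL_2$-algebraic closure (intersected with $M$), then for $p$ a complete $\mcL_2$-type over $\langle C\rangle$, the non-forking extension to $\langle BC\rangle$ is unique once we know it at the level of the reduct—this uses that, by quantifier elimination, the $\mcL_2$-type of a tuple over a substructure is determined by the $\mcL$-type of the tuple together with all its $\sigma$-translates, and that the generated substructure of a tuple together with its translates is the relevant $\mcL$-domain over which to apply $T$-stationarity. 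I would also need the remark that $\acl$ in the reduct interacts well with the $\mbF_2$-action (an $\mcL_2$-automorphism permutes $\mcL$-algebraic closures equivariantly), which is routine but should be stated explicitly. None of this requires the superstability or continuous-symmetry hypotheses—only stability of $T$, model completeness, $\aleph_0$-categoricity, and the assumed amalgamation of $(T_2)_\forall$.
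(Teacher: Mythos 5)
Your overall strategy is the same as the paper's: show that the relation $A\ind^*_C B$ defined by $\langle A\rangle\ind_{\langle C\rangle}\langle B\rangle$ satisfies the axioms characterizing non-forking in a stable theory and invoke uniqueness, and your handling of completeness, quantifier elimination, stability and stationarity is essentially right. However, there is a genuine gap: you never verify the \emph{extension} (existence) property of $\ind^*$, and it is not inherited from $T$. The uniqueness theorem you appeal to (\cite[Thm.~14.14]{bbhu08}) includes extension among its hypotheses, so it cannot be invoked without it; and in this setting extension is exactly where the work lies. Given $A,B,C$, one must produce $A'$ with the same $\mcL_2$-type over $C$ as $A$ (not merely with $\langle A'\rangle$ having the same $\mcL$-type as $\langle A\rangle$) such that $\langle A'\rangle\ind_{\langle C\rangle}\langle B\rangle$. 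The extension property of $\ind$ in the reduct only yields a set $S\eqstp_{\langle C\rangle}\langle A\rangle$ with $S\ind_{\langle C\rangle}\langle B\rangle$; one must then extend $\tau_1,\tau_2$ (conjugating by an $\mcL$-automorphism fixing $\acl(\langle C\rangle)$), check via stationarity of $\ind$, as in Lascar's argument, that the resulting partial maps are elementary over $\langle BC\rangle$, and embed the resulting $\mbF_2$-action into the monster of $T_2^*$ using quantifier elimination. This is the heart of the paper's proof and is absent from yours; stationarity, which you single out as the key point, is by comparison the easy step. (Minor related point: local character is not literally inherited either, since $\langle a\rangle$ is infinite even for finite $a$; one exhausts $\mbF_2$ by finite sets $\Delta_n$, finds countable $C_n\subseteq\langle B\rangle$ with $\Delta_n a\ind_{C_n}\langle B\rangle$, and takes the union.)

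The algebraic-closure step is also flawed in the main direction. By quantifier elimination, an $\mcL_2$-definable predicate over $A$ in the variable $x$ is a uniform limit of $\mcL$-formulas evaluated at the $\mbF_2$-translates $w(x)$, $w\in\mbF_2$, with parameters from $\langle A\rangle$; its zero set is therefore not an $\mcL$-definable set over $\langle A\rangle$ in the single variable $x$, and compactness of the set of $\mcL_2$-conjugates of $b$ over $A$ does not transfer to the a priori larger set of $\mcL$-conjugates of $b$ over $\langle A\rangle$. (Also, in the metric setting algebraicity means the set of realizations is compact, not finite.) The clean route, which the paper takes, is to deduce this inclusion from the independence characterization: if $b$ is $\mcL_2$-algebraic over $A$ then $b\ind^*_A b$, hence $b\ind_{\langle A\rangle} b$ in the reduct, hence $b\in\acl(\langle A\rangle)$; the converse inclusion is, as you say, straightforward since $\langle A\rangle\subseteq\dcl_{\mcL_2}(A)$.
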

\begin{proof}
Everything follows from very standard arguments, but we give some details for the convenience of the reader. The first point is immediate from the fact that every model of $T_k$ embeds into a model of $T_k^*$, and the model completeness of $T_k^*$. For the second, the amalgamation property for substructures of models of $T_k$, the fact that every model of $T_k$ embeds into a model of $T_k^*$, and the model completeness of $T_k^*$, readily imply the condition of the criterion for quantifier elimination in \cite[Prop.~13.6]{bbhu08}. On the other hand, the stability of $T$ implies that all quantifier-free $\mcL_k$-formulas are stable, so if $T_k^*$ has quantifier elimination then it is also stable.

For the moreover part, let $\ind^*$ denote the ternary relation defined by $A\ind_C^* B$ iff $\langle A\rangle\ind_{\langle C\rangle}\langle B\rangle$. Note that since for $S,S'\subseteq M$, $\langle S\cup S' \rangle$ coincides with the $\mcL$-substructure generated by $\langle S\rangle\cup \langle S'\rangle$, we also have $A\ind_C^* B$ iff $\langle AC \rangle\ind_{\langle C\rangle}\langle BC\rangle$. It suffices as per \cite[Thm.~14.14]{bbhu08} to show that $\ind^*$ satisfies the properties listed in \cite[Thm.~14.14]{bbhu08}. Most properties follow readily from the corresponding properties of $\ind$, except for \emph{stationarity}, \emph{local character} and \emph{extension}. Stationarity of $\ind^*$ follows easily from stationarity of $\ind$ plus quantifier elimination of $T_k^*$. For local character: let $a$ be a finite tuple and $B$ a set, and let $\Delta_n\subseteq \mbF_k$ be an increasing sequence of finite subsets whose union is $\mbF_k$. For each $n\geq 1$ there is a countable set $C_n\subseteq \langle B\rangle$ such that $\Delta_n a\ind_{C_n}\langle B\rangle$. Then $C=\bigcup_n C_n\subseteq \langle B\rangle$ is countable, and it is easy to see that $a\ind_C^*B$.

The extension property of $\ind^*$ is essentially the so-called \emph{PAPA} in \cite{lasBeaux} and \cite{chapil}, but for tuples of automorphisms. Specifically, given $A,B,C\subseteq M$, there is (by the extension property of $\ind$ within $M^\meq$) a set $S$ in an elementary extension of $M$ such that $S\eqstp_{\langle C\rangle}\langle A\rangle$ and $S\ind_{\langle C\rangle}\langle B\rangle$. Let $\theta$ be an $\mcL$-automorphism of a further extension of $M$ (to which, we may assume, the automorphisms $\tau_i$ extend) sending $\langle A\rangle$ to $S$ and fixing $\acl(\langle C\rangle)$ pointwise in $M^\meq$. If for each $i$ we let $\sigma_i=\theta\tau'_i\theta\inv$ for some extension $\tau'_i$ of $\tau_i$, then $\sigma_i|_{\acl(\langle C\rangle)}=\tau_i|_{\acl(\langle C\rangle)}$. Now this and the independence condition $S\ind_{\langle C\rangle}\langle B\rangle$ imply (by stationarity of $\ind$, as in the proof of \cite[Thm.~3.3]{lasAutour}) that the maps $\sigma_i|_S\cup\tau_i|_{\langle BC\rangle}$ are elementary. Thus, by quantifier elimination of $T_k^*$, the $\mbF_k$-action induced on $S\cup\langle BC\rangle$ by these maps can be $\mcL_k$-embedded over $\langle BC\rangle$ into an elementary extension of $(M,\bar\tau)$. Again by quantifier elimination, and since $\theta$ fixes $\langle C\rangle$, the image $A'$ of $\theta(A)$ by this embedding has the same $\mcL_k$-type over $C$ as $A$, and satisfies $A'\ind^*_CB$, as desired.

Concerning the algebraic closure, if $b\in M$ is algebraic over $A$ in the structure $(M,\bar\tau)$, then $b$ is independent from itself over $A$, and from our characterization of independence we obtain that $b\ind_{\langle A\rangle}b$. But then $b$ is algebraic over $\langle A\rangle$ in the sense of $M$.
\end{proof}

We end this section with some remarks and comments.

\begin{rem}\label{rem:T2forall=Tforall2}
The theory $(T_k)_\forall$ always implies the theory $(T_\forall)_k$ (because the inverses of the automorphisms are in the language), and if $T$ has quantifier elimination then $(T_k)_\forall\equiv (T_\forall)_k$. In particular, if $T$ has quantifier elimination and $T_k^*$ exists, then $T_k^*$ is also the model completion of $(T_\forall)_k$.
\end{rem}

\begin{rem}
If $T$ and $T'$ are bi-interpretable model complete theories and $T_k$ has a model companion, then so does $T'_k$. This can be used to conclude the existence of a model completion of $T'_k$ for some theories $T'$ that do not fall within the scope of Assumption~\ref{assumption:main}. Thus, for instance, our results for the theory of $\mbF_k$-actions on probability algebras (see Subsection~\ref{ss:meas-alg}) can be transferred to the theory $\ARV_k$ of $\mbF_k$-actions on spaces of $[0,1]$-valued random variables.
\end{rem}

\begin{rem}
In the discrete first-order setting, if $T$ is superstable and has a model companion $T_1^*$, then $T_1^*$ need not be stable, but, if it is, then it is superstable (and if it is not stable, then it is still \emph{supersimple} \cite[Cor.~3.8]{chapil}). This property may fail in the continuous setting. For example, when $T$ is the theory of atomless probability algebras, then $T_1^*$ exists and is stable, but not superstable \cite[Thm. 3.6]{BB-perturbations}. On the other hand, in that particular case, $T_1^*$ is $\aleph_0$-stable \emph{up to perturbations of the automorphism} \cite[Thm. 2.12]{BB-perturbations}.
\end{rem}

\begin{rem}
Let $T$ be a model complete theory in classical first-order logic. By results of Kikyo \cite{Ki}, if $T$ is unstable and NIP, or unstable and such that $T_1$ has the amalgamation property, then $T_1$ does not have a model companion. This covers for instance the cases of dense linear orders and of the random graph. (It is an open question whether Kikyo's arguments can be made to work in the continuous setting.)
\end{rem}

\noindent\hrulefill
\section{Metric generics and existential closedness}\label{s:metric-generics}

Throughout this section, $\mcL$ is a countable metric first-order language, $M$ is a separable $\mcL$-structure, and $G$ is the automorphism group of $M$ endowed with the topology of pointwise convergence.  

Here we provide a proof that if the theory of $M$ is $\aleph_0$-categorical and model complete, and an $\mbF_k$-action on $M$ is generic in the topological (or rather, \emph{topometric}) sense, then it is existentially closed (see Theorem \ref{thm:metric-gen-is-exist-closed}). This fact was known to several people, but not yet published. We begin with some background.

A system of basic neighborhoods of the identity of $G$ is given by
\begin{equation}\label{eq:basicneighb}
G_{a,\epsilon}=\{g\in G:d(ga,a)<\epsilon\}
\end{equation}
for $n\geq 1$, $a\in M^n$, and $\epsilon>0$. This makes $G$ into a Polish group. In addition, we equip $G$ with a metric $\partial$, the \emph{uniform metric}, given by
$$\partial(g,h)=\sup_{a\in M}d(ga,ha).$$
The uniform metric is complete and refines the topology of $G$, but in general is not Polish. Note that the metric $\partial$ is also left and right invariant. We follow the usual convention that, unless otherwise stated, all purely topological notions (e.g., when we say that a subset is dense, or comeager) refer to the Polish topology of $G$, and not the one induced by the uniform metric.

We extend $\partial$ to the finite powers of $G$ by $\partial(\bar{g},\bar{h})=\max_i\partial(g_i,h_i)$. Given $k<\omega$ and $A\subseteq G^k$, we denote the \emph{uniform closure} of $A$ by $\ov{A}^\partial$. If we are also given $\eta>0$, we define the $\eta$-fattening of $A$ as
\begin{equation}\label{eq:unifneighb}
(A)_\eta=\{\bar{h}\in G^k:\partial(\bar{h},A)<\eta\}.
\end{equation}
Thus $\ov{A}^\partial = \bigcap_{\eta>0}(A)_\eta$.

We will also consider two group actions on $G^k$. On the one hand we have the diagonal action of $G$ by conjugation: given 
$\bar{g}\in G^k$, $h\in G$, and $S\subseteq G$, we denote
$$h\cdot\bar{g}=(hg_1h\inv,\dots,hg_kh\inv) \mbox{\ \ and\ \ } S\cdot\bar{g}=\{h\cdot\bar{g}:h\in S\}.$$
On the other hand, we will consider the canonical group law in the powers $G^k$. That is, if $\bar{g},\bar{h}\in G^k$ and $T\subseteq G^k$, then
$$\bar{h}\bar{g}=(h_1g_1,\dots,h_kg_k) \mbox{\ \ and\ \ } \bar{h}T=\{\bar{h}\bar{g}:\bar{g}\in T\}.$$

The following notion comes from \cite{BBM-topometric}.

\begin{defin}
The group $G$ has \emph{metric $k$-generics} if there is a diagonal conjugacy class in $G^k$ whose uniform closure is comeager in the product topology. In symbols: there is $\bar{g}\in G^k$ such that $\ov{(G\cdot\bar{g})}^\partial$ is comeager in $G^k$. In this case we say that $\bar{g}$ is a \emph{metric generic} of $G^k$, and also that the induced action of $\mbF_k$ on $M$ is \emph{metrically generic}.
\end{defin}

\begin{lem}\label{l:baire}
Let $\bar{g}\in G^k$ be a metric generic. Then for every neighborhood $U$ of the identity of $G$ and every $\eta>0$ there is a neighborhood $V$ of $\bar{g}$ such that $(U\cdot\bar{g})_\eta$ is dense in $V$.
\end{lem}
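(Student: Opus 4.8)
The plan is to exploit the Baire category theorem together with the defining property of a metric generic. Since $\bar g$ is a metric generic, the set $\ov{(G\cdot\bar g)}^\partial$ is comeager in $G^k$. First I would write $\ov{(G\cdot\bar g)}^\partial=\bigcap_{m\geq 1}(G\cdot\bar g)_{1/m}$, which expresses the uniform closure as a countable intersection of uniform $\eta$-fattenings of the conjugacy class $G\cdot\bar g$. The key observation is that each such fattening $(G\cdot\bar g)_{\eta}$ contains the open set $(U'\cdot\bar g)_{\eta/2}$ for suitable neighborhoods $U'$ of the identity, but more to the point, we want a \emph{single} conjugating factor restricted near the identity; so instead I would work directly with the set $(U\cdot\bar g)_\eta$ for the given $U$ and $\eta$.

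The core step: fix the neighborhood $U$ of the identity and $\eta>0$. I claim $\ov{(U\cdot\bar g)}^\partial$ has nonempty interior, in fact that $\bar g$ lies in the interior of $\ov{(U\cdot\bar g)_{\eta}}$ (topological closure). To see this, note that $(U\cdot\bar g)_\eta \supseteq$ a uniform neighborhood of each point of $U\cdot\bar g$, and one checks that $\ov{(G\cdot\bar g)}^\partial \subseteq \ov{\bigcup_{h\in G}(hU\cdot\bar g)_\eta}$; since the left side is comeager and $G$ is covered by countably many translates $h_nU$ (as $G$ is separable/Polish), by Baire category one of the sets $\ov{(h_nU\cdot\bar g)_\eta}$ is nonmeager, hence (being closed) has nonempty interior. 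Conjugating by $h_n\inv$ — which is a homeomorphism of $G^k$ for the Polish topology and an isometry for $\partial$ — we get that $\ov{(U\cdot\bar g)_\eta}$ has nonempty interior, say it contains a nonempty open set $W$. Then $V\coloneqq W$ works once we also arrange $\bar g\in V$: replacing $U$ by $U\cap U\inv$ and using that $h\mapsto h\cdot \bar g$ sends a neighborhood basis at the identity to a "basis at $\bar g$" in the conjugacy class, together with continuity of the conjugation action, one can translate $W$ back to a neighborhood of $\bar g$. The density of $(U\cdot\bar g)_\eta$ in $V$ is then immediate: $V\subseteq \ov{(U\cdot\bar g)_\eta}$, so every nonempty open subset of $V$ meets $(U\cdot\bar g)_\eta$.

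The main obstacle I anticipate is the bookkeeping needed to pass from "$(G\cdot\bar g)_\eta$ is nonmeager somewhere, after covering $G$ by countably many $U$-translates" back to a statement localized at $\bar g$ itself, rather than at some arbitrary translate. This requires using the conjugation homeomorphism/isometry to move the nonempty-interior set back, and then using (topological) continuity of $h\mapsto h\cdot\bar g$ at $h=e$ to replace it by a genuine neighborhood of $\bar g$; some care with the interplay between the Polish topology (in which "neighborhood", "dense", "comeager" are interpreted) and the uniform metric $\partial$ (in which the fattenings $(A)_\eta$ are defined) is needed, precisely because $\partial$ refines but does not coincide with the Polish topology. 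A secondary, routine point is checking that shrinking $\eta$ or $U$ only helps, so there is no loss in the "suitable neighborhood" choices. Once these are in place, the Baire category argument closes the proof.
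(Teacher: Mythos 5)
Your first half follows the paper's own route: fix a small identity neighborhood $W$ with $W\inv W\subseteq U$, cover $G$ by countably many translates $g_nW$, observe that the comeager set $\ov{(G\cdot\bar g)}^\partial$ is contained in the fattening $(G\cdot\bar g)_{\eta/2}=\bigcup_n(g_nW\cdot\bar g)_{\eta/2}$, so some term is non-meager, and conjugate by $g_n\inv$ (a homeomorphism for the Polish topology and a $\partial$-isometry, since automorphisms act isometrically) to conclude that $(W\cdot\bar g)_{\eta/2}$ is somewhere dense. Two small repairs to your write-up here: symmetrizing to $U\cap U\inv$ is not enough — you need a smaller neighborhood $W$ with $W\inv W\subseteq U$, because the conjugator you eventually undo lives in $h\inv W$ with $h\in W$; and you should bound the comeager set by the union $\bigcup_n(h_nU\cdot\bar g)_\eta$ itself rather than by the closure of a union, since only the former lets you conclude that a single term is non-meager.

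The genuine gap is the final localization at $\bar g$. You propose to move your somewhere-dense open set to a neighborhood of $\bar g$ by ``continuity of the conjugation action'' and the idea that $h\mapsto h\cdot\bar g$ carries a neighborhood basis at the identity to a basis at $\bar g$. That map is continuous but not open onto a neighborhood of $\bar g$: in the intended examples the conjugacy class $G\cdot\bar g$ is meager (only its uniform closure is comeager), so no conjugation carries your open set onto a set containing $\bar g$, and continuity of the orbit map only gives $U_0\cdot\bar g\subseteq V$ for a prescribed $V$ — the converse of what you need. The paper closes this as follows: letting $V'$ be a nonempty open set in which $(W\cdot\bar g)_{\eta/2}$ is dense, pick by density $\bar g'\in V'$ and $h\in W$ with $\partial(h\cdot\bar g,\bar g')<\eta/2$; conjugating by $h\inv$ makes $(h\inv W\cdot\bar g)_{\eta/2}\subseteq(U\cdot\bar g)_{\eta/2}$ dense in $h\inv\cdot V'$, an open set containing a point at uniform distance $<\eta/2$ from $\bar g$ — but still not containing $\bar g$ itself. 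The missing step is a group translation, not a conjugation: writing $\bar g=\bar u(h\inv\cdot\bar g')$ with $\partial(\bar u,\bar 1)<\eta/2$, left multiplication by $\bar u$ is a homeomorphism sending $h\inv\cdot V'$ onto an open neighborhood $V$ of $\bar g$, and by bi-invariance of $\partial$ it sends $(U\cdot\bar g)_{\eta/2}$ into $(U\cdot\bar g)_{\eta}$, which is therefore dense in $V$. This is exactly why the argument must begin with $\eta/2$: landing on $\bar g$ costs a doubling of the fattening, which your proposal — working with $\eta$ throughout and relying on conjugation alone — does not account for and cannot avoid.
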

\noindent Note that the conclusion may be restated as follows: for every $u\in M^n$ and $\epsilon,\eta>0$ there are $v\in M^m$ and $\delta>0$ such that $(G_{u,\epsilon}\cdot\bar g)_\eta$ is dense in $\bar g G_{v,\delta}^k$ (where $G_{v,\delta}^k\coloneqq (G_{v,\delta})^k\subseteq G^k$).

\begin{proof}
Fix a neighborhood $U$ of the identity of $G$, and $\eta>0$. By the continuity of the group operation and the inverse function we can find $W\subseteq G$ an open set such that $W\inv W\subseteq U$. Let $(g_n)_n\subseteq G$ be a sequence such that $G=\bigcup_n g_nW$. The uniform closure of $G\cdot\bar g$ is contained in
$$(G\cdot\bar g)_{\eta/2}=\bigcup_n (g_nW\cdot\bar g)_{\eta/2}.$$
Hence, some set $(g_n W\cdot\bar g)_{\eta/2}$ is non-meager. This implies that $(W\cdot\bar g)_{\eta/2}$ is non-meager, and in particular somewhere dense.

Let $V'\subseteq G^k$ be a non-empty open set such that $(W\cdot\bar g)_{\eta/2}$ is dense in $V'$, and take $\bar g'\in V'$ and $h\in W$ such that $\partial(h\cdot\bar g,\bar g')<\eta/2$. Thus $\bar g=\bar u(h\inv\cdot\bar g')$ for some $\bar u\in G^k$ with $\partial(\bar u,\bar 1)<\eta/2$. Since $(h\inv W\cdot\bar g)_{\eta/2}$ is dense in $h\inv\cdot V'$ and $h\inv W\subseteq U$, we have that $(U\cdot\bar g)_{\eta/2}$ is dense in $h\inv\cdot V'$. Also, since $\bar{u}(U\cdot \bar g)_{\eta/2}\subseteq (U\cdot\bar g)_\eta$, we have that $(U\cdot\bar g)_\eta$ is dense in $V=\bar{u}(h\inv\cdot V')$. The last set is an open set containing~$\bar g$.
\end{proof}

Let $T$ be the $\mcL$-theory of the structure $M$. We let $T_k$ be the theory $T$ expanded with $k$ automorphisms, as in the previous section. Given $\bar g\in G^k$, we denote by $(M,\bar g)$ the induced model of $T_k$.

\begin{theorem}\label{thm:metric-gen-is-exist-closed}
Suppose $T$ is model complete and $\aleph_0$-categorical, and that there is a metric generic $\bar g\in G^k$. Then $(M,\bar g)$ is an existentially closed model of $T_k$.
\end{theorem}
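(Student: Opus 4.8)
The plan is to verify the existential closedness criterion from Lemma~\ref{l:exist-closedness}: given an extension $(M,\bar g)\subseteq (M',\bar g')$ of models of $T_k$, a finite tuple $a$ in $M$, a finite tuple $b$ in $M'$, and $\epsilon>0$, we must produce a tuple $c$ in $M$ with $a\,\Delta_n b\equiv^\epsilon a\,\Delta_n c$ for the relevant finite word-set $\Delta_n$ (equivalently, $ab\tau_1'(b)\dots\equiv^\epsilon ac\tau_1(c)\dots$). The idea is that the quantifier-free data $ab\tau_1'(b)\dots\tau_k'(b)$ living in $M'$ describes, through an $\aleph_0$-categorical type, an \emph{open condition on the automorphisms}: by $\aleph_0$-categoricity of the reduct $T$, there are tuples $a_0$ in $M$ (obtainable since $M\preceq M'$, as $T$ is model complete) realizing the same $\mcL$-type as $a$, and the requirement "$\bar h\in G^k$ moves a copy of $a$ the way $\bar g'$ moves $b$, up to $\epsilon$" cuts out a basic open set of the form $\bar g\, G_{v,\delta}^k$ after conjugating, since being $\epsilon$-close in $\mcL$-type is a neighborhood condition in the pointwise topology.

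First I would fix $a,b,\epsilon$ and pass to $\tilde b=\Delta_n b$ as in Lemma~\ref{l:exist-closedness}, so that it suffices to approximate $a\tilde b$ by $a\tilde c$ with $\tilde c$ from $M$ and then extract $c=c_\emptyset$. Next, since $M\preceq M'$ (model completeness of $T$) and the expansion $(M,a)$ is $\aleph_0$-categorical (here I use that $T$ is $\aleph_0$-categorical; exactness is not being assumed in this section, but only $\aleph_0$-categoricity of $T$ itself is needed, applied to the $\emptyset$-type of a finite tuple), choose a tuple $e$ in $M$ with $e\equiv \tilde b$ over $a$ — more precisely, a tuple $ae'$ in $M$ with $ae'\equiv_{\mcL} a\tilde b$ — realizing the $\mcL$-type of $a\tilde b$. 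Then the condition on $\bar h\in G^k$ that "$\bar h$ sends $e'$ close to the pattern $(w\mapsto wb)$ dictated by $\bar g'$" is an open condition, and unraveling it shows it is satisfied by some conjugate $\bar h = k\cdot\bar g'|_M$ — but $\bar g'|_M$ need not be $\bar g$. The correct move is: the $\mcL_2$-type of $a\tilde b$ over $a$ in $(M',\bar g')$ is realized, and realizing a quantifier-free $\mcL_2$-formula amounts to finding an element of $M$ on which $\bar g$ acts in a prescribed approximate way, i.e. finding $\bar h\in \bar g\, G_{v,\delta}^k$ (after translating to the identity) with $M$-orbit exhibiting the pattern; by Lemma~\ref{l:baire} applied with $U=G_{a,\epsilon'}$, for suitable small $\epsilon'$ and $\eta$ there is a neighborhood $V=\bar g\,G_{v,\delta}^k$ of $\bar g$ in which $(G_{a,\epsilon'}\cdot\bar g)_\eta$ is dense; since $(M,\bar g)\preceq (M',\bar g)$ forces the $\mcL_2$-type we are targeting to be consistent with the theory, the desired open set of $G^k$ — namely the $\bar h$ producing the pattern up to $\epsilon$ — contains $V$, hence meets $(G_{a,\epsilon'}\cdot\bar g)_\eta$, and a point of this intersection is a conjugate $k\cdot\bar g$ lying within $\eta$ of an element whose $M$-action already moves $a$ by less than $\epsilon'$. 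Tracking through, the conjugating $k$ transports the pattern back onto a tuple $c$ sitting in $M$ (or within $\epsilon$ of one, absorbed by the error budget), giving $a\,\Delta_n c\equiv^\epsilon a\,\Delta_n b$.

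The main obstacle — and the step requiring the most care — is the translation in the previous paragraph between the \emph{model-theoretic} target ("there exists $c$ in $M$ realizing approximately the $\mcL_2$-type of $b$ over $a$") and the \emph{dynamical} statement ("a suitable basic open set of $G^k$ is nonempty and, being open, meets the dense fattened conjugacy class"). This hinges on two points: (i) that the target open set $W\subseteq G^k$ is genuinely nonempty — which follows because $(M,\bar g)\preceq (M',\bar g)$ (again model completeness of $T$, so the common $\mcL$-reduct is elementary) together with $\aleph_0$-categoricity guarantees the relevant partial $\mcL_2$-type is satisfiable back in the separable model; and (ii) that being $\epsilon$-close in $\mcL$-type is an open, not merely Borel, condition on the action — which is exactly Ryll-Nardzewski, since by $\aleph_0$-categoricity types are isolated and $\{\bar h : \bar h \text{ moves } a_0 \text{ into the pattern up to } \epsilon\}$ is a finite union of basic neighborhoods. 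Once the right open sets are identified, the density supplied by Lemma~\ref{l:baire}, combined with the left-invariance of $\partial$ and Remark-style $2\eta$ bookkeeping on the uniform metric, closes the argument; the $\eta$-fattening absorbs the discrepancy between "$c$ exactly in $M$" and "$c$ in a conjugate of $M$'s image", which is where invariance of $\partial$ under left and right translation is used.
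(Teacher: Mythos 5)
Your overall plan --- reduce via Lemma~\ref{l:exist-closedness}, view approximate realization of the quantifier-free $\mcL_2$-data as an open condition on $G^k$, and intersect it with the dense set supplied by Lemma~\ref{l:baire} --- has the right shape, and it is also the shape of the paper's proof. But the decisive step is asserted rather than proved, and as asserted it is false: you claim that the set $W$ of tuples $\bar h\in G^k$ ``producing the pattern up to $\epsilon$'' \emph{contains} the whole neighborhood $V=\bar g\,G^k_{v,\delta}$. There is no reason for an entire basic neighborhood of $\bar g$ to consist of such tuples; what is needed is only that $W$ meets $\bar g\,G^k_{v,\delta}$ in a nonempty relatively open set, and establishing this is where all the work lies. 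Your justification of nonemptiness is moreover circular: you invoke ``$(M,\bar g)\preceq (M',\bar g)$'' and satisfiability of the relevant partial $\mcL_2$-type back in the separable model, but the extension of models of $T_k$ is not elementary (only the $\mcL$-reducts are, by model completeness of $T$), and approximate satisfiability of that $\mcL_2$-type in $(M,\bar g)$ is precisely the statement being proved.

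The missing content is the following. After fixing $v$ and $\delta$ by Lemma~\ref{l:baire}, one must approximately realize in $M$ the $\mcL$-type (not the $\mcL_2$-type) of $b\bar\tau(b)$ over the parameters $a\,v\,\bar g(v)$; your proposal realizes a type over $a$ alone, so the tuple $v$ never re-enters and the link with the neighborhood $\bar g\,G^k_{v,\delta}$ is lost. From the resulting tuples $c_0,\dots,c_k$ in $M$, with $vc_0\equiv^{2\eta} g_i(v)c_i$, the approximate homogeneity of the separable model of the $\aleph_0$-categorical theory $T$ yields genuine automorphisms $g_i'\in G$ with $d(g_i'v,g_iv)<\delta$ and $g_i'c_0$ close to $c_i$; it is this construction that exhibits a point $\bar g'\in W\cap \bar g\,G^k_{v,\delta}$, and only then does density of $(G_{a,\epsilon/4}\cdot\bar g)_{\epsilon/4}$ in $\bar g\,G^k_{v,\delta}$ give $h\in G_{a,\epsilon/4}$ with $h\cdot\bar g$ close to $\bar g'$ on $c_0$, after which $c=h\inv c_0$ lies exactly in $M$ (no extra error is needed there) and invariance of $\partial$ closes the estimate. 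Two further slips: exact realization of the type over $a$ (your tuple $e'$) is not available without exactness, which is not assumed in this theorem --- only approximate realization holds, though that is absorbable by bookkeeping; and ``$\bar g'|_M$'' has no meaning, since the extension automorphisms need not preserve $M$. Without the parameter choice $a\,v\,\bar g(v)$ and the homogeneity step producing the intermediate tuple $\bar g'$, the argument does not close.
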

\begin{proof}
Suppose $(M,\bar g)\subseteq (N,\bar\tau)$ where $N$ is a model of $T$ and $\bar\tau\in\Aut(N)^k$. Let $a$ and $b$ be finite tuples from $M$ and $N$, respectively, and take $\epsilon>0$. By Lemma~\ref{l:exist-closedness} (adapted to $T_k$ in the obvious way), it is enough to show that there is a tuple $c$ in $M$ with
$$ab\bar{\tau}(b)\equiv^\epsilon ac\bar{g}(c),$$
where $\bar{\tau}(b)$ denotes the tuple $(\tau_1b,\dots,\tau_kb)$ and $\bar{g}(c)$ denotes $(g_1c,\dots,g_k c)$. By  Lemma~\ref{l:baire}, we can choose a finite tuple $v$ from $M$ and $\delta>0$ such that $(G_{a,\epsilon/4}\cdot \bar g)_{\epsilon/4}$ is dense in $\bar g G_{v,\delta}^k$.

Now, by model completeness and $\aleph_0$-categoricity of $T$, there are tuples $c_0,\dots,c_k\in M^{|b|}$ such that, for $\eta=\min(\epsilon/6,\delta/2)$, we have
$$av\bar{g}(v)b\bar{\tau}(b)\equiv^\eta av\bar{g}(v)c_0c_1,\dots,c_k.$$
In particular, $vc_0\equiv^{2\eta} g_i(v)c_i$ for each $i=1,\dots,k$, and thus there are $g'_i\in G$ such that $d(g'_iv,g_iv)<\delta$ and $d(g'_ic_0,c_i)<\epsilon/3$ for each $i$. Hence, if $\bar{g}'=(g'_1,\dots,g'_k)$, we have
$$ab\bar{\tau}(b)\equiv^{\epsilon/2} ac_0\bar{g}'(c_0)\ \text{ and }\ \bar{g}'\in \bar{g}G^k_{v,\delta}.$$

Finally, as $(G_{a,\epsilon/4}\cdot \bar g)_{\epsilon/4}$ is dense in $\bar g G_{v,\delta}^k$, there is $h\in G_{a,\epsilon/4}$ such that, for every $i=1,\dots,k$,
$$d(hg_ih\inv c_0,g'_ic_0)<\epsilon/2.$$
If we define $c=h\inv c_0$, using the previous inequality and the fact that $h\in G_{a,\epsilon/4}$, we get
$$d(ac_0\bar{g}'(c_0),h(a)h(c)h\bar{g}(c))<\epsilon/2,$$
and thus also
$$ab\bar{\tau}(b)\equiv^\epsilon ac\bar{g}(c),$$
as desired.
\end{proof}

\begin{examples}
\begin{enumerate}[wide]\ 
\item If $M$ is a countably infinite set with no further structure, the existentially closed automorphisms of $M$ are those permutations that have infinitely many $n$-cycles for every $n\geq 1$. There is a comeager conjugacy class in $\Sym(M)$, which consists of those permutations that, in addition to the previous property, have no infinite orbits. These correspond to the prime models of $T_1$.

\item Using Hrushovski's work \cite{hrushovski1992extending} on extending partial automorphisms of finite graphs and ideas by Kechris--Rosendal \cite{kecros}, it is easy to see that for all finite $k\geq 1$ the group of automorphisms of the random graph has $k$-generics, but it is well-known that the corresponding theory $T_k$ has no model companion for any $k\geq 1$.

\item If $M$ is a separable, atomless probability measure algebra, the existentially closed automorphisms of $M$ are those measure-preserving invertible transformations that are aperiodic, and they coincide with the metric generics of $\Aut(M)$. The conjugacy class of each single transformation is meager.
\end{enumerate}
\end{examples}

A discussion comparing the two notions of genericity for automorphisms of discrete, saturated, countable structures can be found in \cite{barbina2012generic}.

\noindent\hrulefill
\section{Main particular cases and equi-homogeneity}\label{s:particular-cases}

The conditions of exactness and of continuous symmetry of the independence relation, discussed in Section~\ref{s:def-of-indep}, always hold in the \emph{classical} first-order, $\aleph_0$-categorical, stable setting. Thus the main result of Section~\ref{s:main} specializes to the following theorem, which in the case $k=1$ is essentially contained in \cite{lasBeaux}, and in the general case had been noticed by A.\ Pillay and possibly others.

\begin{theorem}
Let $T$ be a classical, model complete, $\aleph_0$-categorical, superstable  theory in a countable language. Then the theory of $\mbF_k$-actions on models of $T$ has a model completion.
\end{theorem}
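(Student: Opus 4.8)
The plan is to obtain this as a specialization of Theorem~\ref{thm:main}: regard the classical theory $T$ as a metric theory over the same (countable) language with the discrete $\{0,1\}$-valued metric, and verify that it satisfies Assumption~\ref{assumption:main}. Model completeness, $\aleph_0$-categoricity and superstability are hypotheses, so only two points remain: that $T$ is \emph{exact} and that its forking independence relation is \emph{continuously symmetric} on the countable model $M$. Both are folklore in the classical setting, but I would spell them out.

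Exactness is the Ryll--Nardzewski phenomenon: if $T$ is $\aleph_0$-categorical, then for every $n$ there are finitely many complete types in variables $(x,y)$ with $|y|=|a|$, hence finitely many complete $n$-types over any finite tuple $a$, so $(M,a)$ is again $\aleph_0$-categorical; thus $T$ is exact. (In passing, the same argument gives that $\acl(a)$ is finite for finite $a$, so the strong types over finite sets discussed in Section~\ref{s:def-of-indep} behave as expected, and the continuous-logic forking independence agrees with the classical one — all standard.) Continuous symmetry is where the discreteness of the metric does all the work: for $\delta\le 1$, the relation $b^\delta\ind_a c$ means there is $b'$ with $b'\eqstp_a b$, $b'\ind_a c$ and $d(b,b')<\delta$, but the only tuple at discrete distance $<1$ from $b$ is $b$ itself, so $b^\delta\ind_a c$ is simply $b\ind_a c$. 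By symmetry of forking, $b\ind_a c$ iff $c\ind_a b$, and a fortiori $c^\epsilon\ind_a b$ for every $\epsilon>0$; hence Definition~\ref{d:continuously-symmetric} holds by taking $\delta=1$ for every $\epsilon>0$.

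With Assumption~\ref{assumption:main} verified, Theorem~\ref{thm:main} applies and gives a model completion $T_2^*$ of $T_2$; the case of general $k$ (including $k=\omega$) follows either by running the arguments of Section~\ref{s:main} mutatis mutandis or, formally, via the reduction of \cite[Cor.~2.5]{kikpil} from several automorphisms to two. There is no genuinely hard step here — the mathematical content lives entirely in Theorem~\ref{thm:main}. The only subtlety worth flagging is conceptual: continuous symmetry of $\ind$ can genuinely fail in the metric world (as the Hilbert space remark in Section~\ref{s:def-of-indep} shows), yet it becomes vacuous the moment the metric is discrete, which is exactly why the classical case is so much easier than the metric one.
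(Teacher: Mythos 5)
Your proposal is correct and follows the paper's own route exactly: the paper likewise observes that exactness and continuous symmetry of $\ind$ are automatic in the classical (discrete-metric) $\aleph_0$-categorical stable setting and then invokes Theorem~\ref{thm:main}, with the general $k$ handled \emph{mutatis mutandis} or via the Kikyo--Pillay reduction. Your explicit verifications (Ryll--Nardzewski for exactness, and the discreteness of the metric trivializing $b^\delta\ind_a c$ so that $\delta=1$ witnesses continuous symmetry) are precisely the folklore facts the paper leaves unstated.
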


The example we are mostly interested in, on the other hand, is a metric one: that of probability measure algebras. We recall that the theory \APA\ of atomless probability algebras is $\aleph_0$-stable, $\aleph_0$-categorical, exact, and eliminates quantifiers in its natural language; see \cite[\textsection 16]{bbhu08}. Below we will show that its independence relation is continuously symmetric.

In fact, we will introduce a strong form of homogeneity which implies that the independence relation is continuously symmetric, and which we will verify in the separable model of \APA. Moreover, this homogeneity condition also holds for \emph{randomizations} of classical first-order theories, yielding an interesting class of metric structures to which our main result applies.

For the rest of this section $M$ will be a separable structure for a countable metric language.  We say $M$ is $\aleph_0$-categorical if its theory is $\aleph_0$-categorical.

To provide some motivation for our homogeneity condition, we recall first that Effros's Theorem implies that if $M$ is homogeneous, then the orbit maps $g\in G\mapsto ga\in Ga$ for $a\in M^n$ are open. (For background on the Effros Theorem, see for instance  \cite[\textsection 5]{ibameg} and the references therein.) Actually, as observed in \cite[Prop.~2.9]{benusvD-fin} and later also in \cite[Prop.~5.8]{ibameg}, we have the following stronger fact. We use the notation $G_{b,\epsilon}$ from the previous section (see equation (\ref{eq:basicneighb})), and for $a \in M^n$ and $\delta>0$ we define $B_\delta(a) = \{a' \in M^n \mid d(a,a')<\delta \}$.

\begin{prop}
Let $M$ be a homogeneous $\aleph_0$-categorical structure, with $G$ its automorphism group. Then for every $n,k\in\mbN$, 
$\epsilon>0$ and every $G$-orbit $O\subseteq M^n$ there is $\delta>0$ such that for all $a\in O$ and $b\in M^k$ we have $B_\delta(a)\cap O\subseteq G_{b,\epsilon}a$.
\end{prop}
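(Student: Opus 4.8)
The plan is to recast the statement as the uniform continuity, in a suitable variable, of a definable predicate, thereby upgrading a pointwise Effros-type estimate to the desired uniform one. For fixed $n,k$ and the given $G$-orbit $O\subseteq M^n$, I would work with the function
$$E\colon O\times O\times M^k\to[0,\infty),\qquad E(a,a',b)=\inf\{\,d(gb,b):g\in G,\ ga=a'\,\},$$
which is well defined because $O$ is a single $G$-orbit, so $\{g\in G:ga=a'\}\neq\emptyset$ for $a,a'\in O$. Two immediate remarks: $E(a,a,b)=0$ for all $a\in O$, $b\in M^k$ (take $g=\mathrm{id}$); and, for every $\epsilon>0$, one has $a'\in G_{b,\epsilon}a$ if and only if $E(a,a',b)<\epsilon$. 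Hence the proposition is exactly the assertion that, for every $\epsilon>0$, there is $\delta>0$ with $E(a,a',b)<\epsilon$ whenever $a,a'\in O$, $b\in M^k$ and $d(a,a')<\delta$; that is, $E$ is uniformly continuous in its middle argument at the diagonal $\{a'=a\}$, uniformly over $a$ and $b$.

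The first step is to observe that $E$ is $G$-invariant for the diagonal action on $O\times O\times M^k$; this is a one-line reindexing, $E(ha,ha',hb)=\inf\{d(ghb,hb):gha=ha'\}=\inf\{d((h^{-1}gh)b,b):(h^{-1}gh)a=a'\}=E(a,a',b)$. The second, and main, step is to show $E$ is \emph{continuous} on $O\times O\times M^k$. Continuity in $b$ is easy (in fact Lipschitz): since each $g\in G$ acts isometrically on $M^k$, an element almost realizing $E(a,a',b)$ also witnesses $E(a,a',\tilde b)\le E(a,a',b)+2\,d(b,\tilde b)$ up to arbitrarily small slack, and symmetrically.

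Continuity in the pair $(a,a')$, with $b$ fixed, is where Effros's theorem enters: as $T$ is exact, $O$ is closed in $M^n$, hence Polish, and $G$ acts continuously and transitively on it, so each orbit map $g\mapsto ga_0$ ($a_0\in O$) is open onto $O$. Given a base point $(a_0,a'_0,b)$ and $\eta>0$, I would fix $g_0$ with $g_0a_0=a'_0$ and $d(g_0b,b)$ within $\eta$ of $E(a_0,a'_0,b)$, and then use openness of the orbit maps at $a_0$ and at $a'_0$ to produce $\zeta>0$ such that any $a\in O$ (resp.\ $a'\in O$) with $d(a,a_0)<\zeta$ (resp.\ $d(a',a'_0)<\zeta$) has the form $ua_0$ (resp.\ $u'a'_0$) for some $u,u'\in G$ moving both $b$ and $g_0b$ by less than $\eta$. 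Then $g\coloneqq u'g_0u^{-1}$ satisfies $ga=a'$ and, by the triangle inequality together with the fact that $u,u',g_0$ are isometries, moves $b$ by less than $E(a_0,a'_0,b)$ plus a quantity that is made small by shrinking $\eta$; running the same construction with $u^{-1},u'^{-1}$ gives the opposite inequality. Note that $\zeta$ here is allowed to depend on $a_0,a'_0,b$: only \emph{pointwise} continuity is needed at this stage, which is exactly why the argument ultimately gives more than a bare application of Effros.

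Once $E$ is known to be continuous and $G$-invariant on $O\times O\times M^k$, the argument of Lemma~\ref{l:DOk-is-def} goes through with the obvious changes: fixing $a\in O$, the map $(a',b)\mapsto E(a,a',b)$ on $O\times M^k$ is continuous and $\Aut(M/a)$-invariant, hence a definable predicate of the $\aleph_0$-categorical structure $(M,a)$ (exactness again), so there is an $\mcL$-definable predicate $\chi(x,y,z)$ with $E(a,a',b)=\chi(a,a',b)$ for all $a'\in O$ and $b\in M^k$; invariance of both $E$ and $\chi$ under $G$, together with $O=Ga$, then yields $E(a,a',b)=\chi(a,a',b)$ for all $a,a'\in O$ and $b\in M^k$. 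In particular $E$ is the restriction of a definable predicate, hence uniformly continuous. To conclude, given $\epsilon>0$, pick by uniform continuity of $\chi$ some $\delta>0$ such that changing only the middle $n$ coordinates of its argument by less than $\delta$ changes its value by less than $\epsilon$; since $\chi(a,a,b)=E(a,a,b)=0$, this gives $E(a,a',b)=\chi(a,a',b)<\epsilon$ whenever $a,a'\in O$, $b\in M^k$ and $d(a,a')<\delta$, i.e.\ $B_\delta(a)\cap O\subseteq G_{b,\epsilon}a$, with $\delta$ independent of $a$ and $b$. I expect the continuity of $E$ in $(a,a')$ to be the only real obstacle: it is the sole place where homogeneity (via closedness of orbits and Effros) is used, and it takes some care with the interplay between the uniform metric and the topology of pointwise convergence — but, reassuringly, it only needs to be established pointwise, the uniformity being supplied for free by $\aleph_0$-categoricity in the last step.
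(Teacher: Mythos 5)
Your argument is essentially correct, and it is worth noting that the paper itself does not prove this proposition: it is quoted from the two references cited just before its statement, so what you give is a self-contained proof rather than a reproduction of one in the text. Your route is exactly in the spirit of Lemma~\ref{l:DOk-is-def}: package the statement into a $G$-invariant auxiliary function $E(a,a',b)=\inf\{d(gb,b):ga=a'\}$, prove it continuous, use $\aleph_0$-categoricity of $(M,a)$ (equivalently, by the equivalence recorded in Section~\ref{s:def-of-indep}, the homogeneity hypothesis) to upgrade continuity plus invariance to definability, and read off the uniform $\delta$ from the modulus of uniform continuity, using $E(a,a,b)=0$. The reduction of the proposition to ``$E<\epsilon$ whenever $d(a,a')<\delta$'' is right, the $G$-invariance computation is right, the $2$-Lipschitz dependence on $b$ is right, and your Effros step is sound: homogeneity makes the orbit $O$ a closed (hence Polish) set on which $G$ acts transitively, so the orbit maps are open, and the triangle-inequality computation with $g=u'g_0u^{-1}$ and $g'=u'^{-1}gu$ does give $|E(a,a',b)-E(a_0,a_0',b)|\le 3\eta$ on a neighbourhood; together with the Lipschitz bound in $b$ this yields joint continuity. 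As you say, only pointwise continuity is needed here, the uniformity coming from categoricity at the end.

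The one step you should write more carefully is the definability claim. After fixing $a\in O$, your function lives on $O\times M^k$, not on a full power $M^m$, so the Ryll-Nardzewski-type characterization quoted in Section~\ref{s:def-of-indep} (continuous $+$ invariant \emph{on $M^m$} implies definable) does not literally apply; this is precisely the point where your situation differs from Lemma~\ref{l:DOk-is-def}, where fixing $a$ does reduce the domain to a full power. The fix is standard and within what the paper allows when it speaks of predicates defined on a $G$-orbit: a continuous $\Aut(M/a)$-invariant function on the closed invariant set $O\times M^k$ factors through the space of types over $a$ concentrated on that set (invariance plus homogeneity of $(M,a)$ give well-definedness, continuity at a fixed realization gives continuity of the factor map), and this type space is compact \emph{in the metric topology} exactly because $(M,a)$ is $\aleph_0$-categorical; one then either extends by Tietze to get a genuine definable predicate $\chi(x,y,z)$, or---more economically---skips $\chi$ altogether, since the factorization through a compact metric space via the $1$-Lipschitz type map already gives uniform continuity of $E(a,\cdot,\cdot)$, and $G$-invariance transports this single modulus to every $a\in O$, which is all your last paragraph actually uses.
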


The proposition states that the action of $G$ on each orbit $O$ satisfies the conclusion of Effros's Theorem in a uniform way, since the same $\delta$ works for every $a\in O$ (and simultaneously for all neighborhoods of the identity of the form $G_{b,\epsilon}$ with $b\in M^n$).  Later in this paper we will need an even stronger uniformity condition, namely that the same 
$\delta$ works for all $a\in M^n$ with no restriction to a given orbit.  

We will actually define two conditions, both requiring stronger uniformity than in the proposition.  For the second one we use the notation $G_\epsilon= \{g\in G:\sup_{c\in M}d(gc,c)<\epsilon\}$. That is, $G_\epsilon$ is the $\epsilon$-neighborhood of the identity for the uniform distance $\partial$ discussed in the previous section: $G_\epsilon=(1_G)_\epsilon$ (see equation (\ref{eq:unifneighb})).

\begin{defin} Let $M$ be a separable structure and $G$ its automorphism group.
\label{D:equi-homogeneous}
\begin{enumerate}
\item We say that $M$ is \emph{equi-homogeneous} if for every $n,k\in\mbN$ and $\epsilon>0$ there is $\delta>0$ such that for all $a\in M^n$ and $b\in M^k$ we have $B_\delta(a)\cap Ga\subseteq G_{b,\epsilon}a$.
\item We say that $M$ is \emph{uniformly equi-homogeneous} if for every $n\in\mbN$ and $\epsilon>0$ there is $\delta>0$ such that for all $a\in M^n$ we have $B_\delta(a)\cap Ga\subseteq G_\epsilon a$.
\end{enumerate}
\end{defin}

Clearly, if $M$ is uniformly equi-homogeneous then it is equi-homogeneous.  Further, by the converse of Effros's Theorem (i.e., if the orbit maps are open then the orbits are Polish, and hence closed since the action is isometric), every equi-homogeneous separable structure is homogeneous.

\begin{example}\label{ex:Urysohn}
The \emph{Urysohn sphere} is a uniformly equi-homogeneous, $\aleph_0$-categorical structure; see \cite[Prop.~5.3]{ibameg}. It is however not stable, so it does not fall within the hypotheses of our analysis of existentially closed actions. In fact, if Kikyo's result \cite[Thm.~3.3]{Ki} holds in the metric setting---which is unclear to us---one could use it to prove that $T_1$ has no model companion in this case.
\end{example}

\begin{lem}\label{l:from-homog-to-cont-sym}
Let $M$ be a stable, $\aleph_0$-categorical structure. If $M$ is equi-homogeneous, then the stable independence relation is continuously symmetric on $M$.
\end{lem}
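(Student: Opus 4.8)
The plan is to verify the defining condition of continuous symmetry directly, by translating the $\epsilon$-independence statements into statements about the action of $G=\Aut(M)$ on finite powers of $M$ and then invoking equi-homogeneity to obtain the required \emph{uniform} constant. Two preliminary observations make the translation work: since an equi-homogeneous separable structure is homogeneous, two finite tuples of $M$ realize the same type over $\emptyset$ exactly when they lie in a common $G$-orbit; and, as $M$ is $\aleph_0$-categorical and (being homogeneous) exact, whenever the parameters are finite the witness $b'$ occurring in the definition of $b^\delta\ind_a c$ may be taken inside $M$.

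Now fix $n,k,l\in\mbN$, a $G$-orbit $O\subseteq M^n$, and $\epsilon>0$; I must produce $\delta>0$ so that $b^\delta\ind_a c$ implies $b\ind_a c^\epsilon$ (an equivalent form of the conclusion, by the observation right after Definition~\ref{d:continuously-symmetric}) for all $a\in O$, $b\in M^k$, $c\in M^l$. First I would apply the equi-homogeneity hypothesis with the pair of dimensions $(n+k,\,n+l)$ and error $\epsilon/2$, obtaining $\delta>0$ such that $B_\delta(x)\cap Gx\subseteq G_{y,\epsilon/2}\,x$ for all $x\in M^{n+k}$ and $y\in M^{n+l}$. I claim this $\delta$ works --- and in fact for every $a\in M^n$, not just for $a\in O$.

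To verify the claim, take such $a,b,c$ with $b^\delta\ind_a c$ and pick $b'\in M$ with $b'\eqstp_a b$, $b'\ind_a c$ and $d(b,b')<\delta$. Then $ab'$ and $ab$ have the same type over $\emptyset$, hence lie in a common $G$-orbit, while $d(ab,ab')=d(b,b')<\delta$; so $ab'\in B_\delta(ab)\cap G(ab)\subseteq G_{ac,\epsilon/2}(ab)$. Thus there is $g\in G$ with $g(ab)=ab'$ and $d(g(ac),ac)<\epsilon/2$, i.e.\ $g(a)=a$, $g(b)=b'$ and $d(gc,c)<\epsilon/2$. Since non-forking independence is $\Aut(M)$-invariant, applying $g^{-1}$ to $b'\ind_a c$ gives $b\ind_a g^{-1}(c)$, where $d(c,g^{-1}(c))=d(gc,c)<\epsilon/2$; by Remark~\ref{rem:eps-ind-without-stp} this yields $b\ind_a c^\epsilon$, as wanted.

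The argument is short and I do not foresee a genuine obstacle; the one point carrying the real weight is that equi-homogeneity lets $\delta$ be chosen \emph{before} the tuples $a,b,c$, whereas ordinary homogeneity (through Effros's theorem) would supply such a $\delta$ only after fixing the orbit and the target tuple --- which is exactly why plain homogeneity does not suffice. The harmless factor-of-$2$ slack, forced because the extracted automorphism $g$ fixes $a$ but possibly not $\acl(a)$ (so $g^{-1}(c)$ need not have the same strong type over $a$ as $c$), is absorbed by Remark~\ref{rem:eps-ind-without-stp} simply by using $\epsilon/2$ in the appeal to equi-homogeneity.
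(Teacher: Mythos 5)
Your proof is correct and follows essentially the same route as the paper's: apply equi-homogeneity to the pair $ab$, $ab'$ with error $\epsilon/2$, use invariance of $\ind$ under the resulting automorphism fixing $a$, and absorb the strong-type issue via Remark~\ref{rem:eps-ind-without-stp}. The only differences are cosmetic (you take the target tuple $ac$ rather than just $c$, and conjugate by $g^{-1}$ instead of $g$), and your preliminary remark about witnesses being realizable in $M$ is a correct filling-in of a point the paper leaves implicit.
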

\begin{proof}
This is just a consequence of the invariance of the independence relation. Indeed, assume $M$ is equi-homogeneous and let $\delta>0$ be such that for every $ab\in M^{n+k}$ and $c\in M^l$ we have $B_\delta(ab)\cap Gab\subseteq G_{c,\epsilon/2}ab$. Suppose we are given $b\in M^k$ with $b^\delta\ind_ac$, and let $b'\in M^k$ be such that $b'\eqstp_a b$, $b'\ind_ac$,  and $d(b,b')<\delta$. Then $ab'\in B_\delta(ab)\cap Gab$, so there is $g\in G$ such that $g(ab')=ab$ and $d(c,gc)<\epsilon/2$. Hence, by invariance, $b\ind_a gc$. We conclude that $b\ind_a c^\epsilon$ by Remark~\ref{rem:eps-ind-without-stp}.
\end{proof}

\subsection{Equi-homogeneity of the measure algebra}\label{ss:meas-alg}

In this subsection, $M$ denotes the probability measure algebra of an atomless Lebesgue space $(X,\mu)$ (i.e., a standard probability space), and $G$ its automorphism group. So $M$ is the unique separable model of \APA.  We recall that the canonical metric on $M$ is given by $d(a,b)=\mu(a\triangle b)$. Following our previous conventions, the finite powers $M^n$ are thus endowed with the distance $d(a,b)=\max_{i<n} \mu(a_i\triangle b_i)$.

However, a much better distance function is available in this case. Indeed, we may also endow the finite powers $M^n$ with the distance
$$d_P(a,b)\coloneqq \tfrac{1}{2}\sum_{s\in 2^n}\mu(p_s\triangle q_s),$$
where $(p_s)_{s\in 2^n}$ and $(q_s)_{s\in 2^n}$ are the partitions of $X$ generated by the $n$-tuples $a$ and $b$, respectively (defined up to measure zero). More precisely, if $a=(a_i)_{i<n}$, $b=(b_i)_{i<n}$, and if we denote 
$a_i^0=a_i$, $a_i^1=X\setminus a_i$ and $b_i^0=b_i$, $b_i^1=X\setminus a_i$, then for each $s\in 2^n=\{0,1\}^n$ we let
$$p_s=\bigcap_{i<n}a_i^{s(i)} \mbox{\ \ and\ \ } q_s=\bigcap_{i<n}b_i^{s(i)}.$$
It is clear that $d_P$ is a metric on $M^n$. It is moreover equivalent to the $\max$-distance $d$. Indeed, on the one hand, we have 
$$\mu(p_s\triangle q_s)\leq \mu\big(\bigcup_{i<n} (a_i^{s(i)}\triangle b_i^{s(i)})\big)\leq nd(a,b)$$
for every $s\in 2^n$. On the other hand, for every $i<n$ and $\eta\in\{0,1\}$,
$$d(a_i^\eta,b_i^\eta) \leq \mu\big(\bigcup_{s(i)=\eta} (p_s\triangle q_s)\big),$$
so that
$$2d(a_i,b_i) = d(a_i^0,b_i^0) + d(a_i^1,b_i^1)\leq 2d_P(a,b).$$
Thus,
\begin{equation}\label{eq:d_P-inequalities}
d(a,b)\leq d_P(a,b)\leq n2^{n-1}d(a,b).
\end{equation}
For elements $a,b\in M$ we have in particular that $d_P(a,b)=d(a,b)$. We also note that $d_P$, like $d$, is $G$-invariant, bounded by $1$, and monotone for concatenation of tuples, in the sense that $d_P(a,b)\leq d_P(aa',bb')$ for any $a,b\in M^n$ and $a',b'\in M^k$.

The distance $d_P$ is particularly nice because of the \emph{equality} in the following lemma, which is not necessary for the subsequent proposition and theorem (a much weaker inequality would suffice) but is crucial for a back-and-forth argument done in Section~\ref{s:profinite-completion}.

\begin{lem}\label{l:d_P-equality}
Let $a,b\in M^n$ be tuples with $a\equiv b$. Then there is $g\in G$ such that $ga=b$ and, for every finite tuple $c\in M^m$, we have
$$d_P(ac,bg(c))=d_P(a,b).$$
In particular, $\partial(g,1_G)\leq  d_P(a,b)$.
\end{lem}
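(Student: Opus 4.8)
Since $a\equiv b$ means the two $n$-tuples generate measure algebras with the same ``shape'', the key observation is that the partitions $(p_s)_{s\in 2^n}$ and $(q_s)_{s\in 2^n}$ satisfy $\mu(p_s)=\mu(q_s)$ for all but a common set of indices $s$ on which both vanish --- more precisely, $\mu(p_s)=\mu(q_s)$ for every $s$, because the type of a finite tuple in \APA\ is determined by the measures of the atoms of the generated finite subalgebra (quantifier elimination). The plan is to build $g\in G$ by first matching $a$ to $b$ in an optimal way on each pair $(p_s,q_s)$ and then extending by a measure isomorphism of the ``leftover'' parts.

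\textbf{Step 1: reduce to the atoms.} I would work on the standard probability space $(X,\mu)$ directly. For each $s\in 2^n$ set $r_s=\mu(p_s\cap q_s)$, and note $\mu(p_s)=\mu(q_s)$ forces $\mu(p_s\setminus q_s)=\mu(q_s\setminus p_s)=\mu(p_s)-r_s$. On the common overlap $p_s\cap q_s$ the identity already works; the only discrepancy lives on $\bigcup_s(p_s\triangle q_s)$, whose measure is exactly $2d_P(a,b)$ by definition. Since all spaces in sight are atomless Lebesgue spaces, for each $s$ choose a measure-preserving bijection (mod null sets) $g_s\colon p_s\setminus q_s \to q_s\setminus p_s$; these have disjoint domains and disjoint ranges, and their union with the identity on $X\setminus\bigcup_s(p_s\setminus q_s)$ is a well-defined measure-preserving transformation $g$ of $X$, hence an element of $G$. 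By construction $g$ maps $p_s$ onto $q_s$ for each $s$, hence $g(a_i)=b_i$ for each $i<n$, i.e.\ $ga=b$.

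\textbf{Step 2: the equality $d_P(ac,bg(c))=d_P(a,b)$.} This is where care is needed. Let $c\in M^m$ with generated partition $(u_t)_{t\in 2^m}$. The partition generated by $ac$ refines $(p_s)$ into pieces $p_s\cap u_t$; the partition generated by $bg(c)$ refines $(q_s)$ into $q_s\cap g(u_t)$. I would compute
\[
2\,d_P(ac,bg(c))=\sum_{s,t}\mu\big((p_s\cap u_t)\triangle(q_s\cap g(u_t))\big).
\]
On $p_s\cap q_s$ the map $g$ is the identity, so $g(u_t)\cap p_s\cap q_s = u_t\cap p_s\cap q_s$; thus the symmetric difference $(p_s\cap u_t)\triangle(q_s\cap g(u_t))$ is \emph{disjoint} from $p_s\cap q_s$. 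Summing over $t$ and collapsing, the contribution of each index $s$ to $2\,d_P(ac,bg(c))$ is therefore at most $\mu(p_s\triangle q_s)$; summing over $s$ gives $d_P(ac,bg(c))\le d_P(a,b)$. For the reverse inequality, I use monotonicity of $d_P$ under concatenation (noted in the excerpt): $d_P(ac,bg(c))\ge d_P(a,b)$ since the partition generated by $ac$ refines that of $a$ and likewise on the other side, and $ga=b$. Hence equality. (Alternatively the reverse inequality follows because the displayed sum restricted to $t$ summing to $u_t=X$ already recovers $\sum_s\mu(p_s\triangle q_s)$ once one checks the cross terms only add; I'd present whichever is cleanest, but monotonicity is the quick route.)

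\textbf{Step 3: the bound on $\partial(g,1_G)$.} For any $c\in M$, $d(gc,c)=d_P(gc,c)$, and applying the case $m=1$ of Step 2 (or directly estimating) gives $d(gc,c)\le \sum_s\mu(p_s\triangle q_s)/1\cdot(\text{something})$; more simply, $gc$ differs from $c$ only inside $\bigcup_s(p_s\triangle q_s)$, a set of measure $2d_P(a,b)$, but a sharper bookkeeping --- splitting that set into the $p_s\setminus q_s$ where $g$ is a bijection onto $q_s\setminus p_s$ --- shows $\mu(gc\triangle c)\le d_P(a,b)$ for every $c\in M$, whence $\partial(g,1_G)=\sup_{c\in M}d(gc,c)\le d_P(a,b)$. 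I would spell out the measure-theoretic bookkeeping here carefully, as it is the one genuinely computational point.

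\textbf{Main obstacle.} The delicate step is Step 2: verifying that refining by an \emph{arbitrary} further tuple $c$ does not increase $d_P$, i.e.\ that the particular $g$ constructed in Step 1 is ``efficient simultaneously for all $c$''. The point that makes it work is that $g$ is the identity on the overlaps $p_s\cap q_s$, so no new discrepancy can be created there by any refinement; all discrepancy stays confined to the $p_s\triangle q_s$, exactly as in the unrefined case. Getting the disjointness bookkeeping and the direction of the inequalities right (and reconciling it with monotonicity of $d_P$, which a priori points the other way) is the part requiring attention.
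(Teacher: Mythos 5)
Your proposal is correct and is essentially the paper's own proof: the same $g$ (measure-preserving swaps of $p_s\setminus q_s$ with $q_s\setminus p_s$, identity elsewhere), the same key observation that $g$ fixes the overlaps so all discrepancy stays inside $\bigcup_s(p_s\triangle q_s)$, with the paper computing the equality directly via $\tilde p_{si}\triangle\tilde q_{si}=c_s^i\cup g(c_s^i)$ where you instead prove $\leq$ and invoke monotonicity of $d_P$ for $\geq$ (and the same monotonicity trick gives $\partial(g,1_G)\leq d_P(a,b)$ in the paper). The one point to make explicit in your Step 2 is that, for fixed $s$, the sets $(p_s\cap u_t)\triangle(q_s\cap g(u_t))$ are pairwise disjoint across $t$ (on $p_s\setminus q_s$ they equal $(p_s\setminus q_s)\cap u_t$, and on $q_s\setminus p_s$ they equal $g\bigl((p_s\setminus q_s)\cap u_t\bigr)$), since disjointness from $p_s\cap q_s$ alone would not bound the sum over $t$ by $\mu(p_s\triangle q_s)$.
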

\begin{proof}
Let as before $p=(p_s)_{s\in 2^n}$ and $q=(q_s)_{s\in 2^n}$ be the partitions induced by the tuples $a$ and $b$. As $a\equiv b$, we have $\mu(p_s)=\mu(q_s)$ for every $s\in 2^n$, and thus also $\mu(p_s\setminus q_s)=\mu(q_s\setminus p_s)$.

Now we define a map $g\colon X\to X$ in the following manner: for every $s\in 2^n$, let $g_s\colon X\to X$ be an invertible measure-preserving map sending $p_s\setminus q_s$ to $q_s\setminus p_s$. Then let $g\colon X\to X$ coincide with $g_s$ on each set $p_s\setminus q_s$, and be the identity elsewhere. Because $p$ and $q$ are partitions, $g$ is a well-defined, invertible measure-preserving map, which we may see as an element of $G=\Aut(M)$. Moreover, $g(p_s)=q_s$ for each $s$, so $ga=b$.

We check that $g$ has the property of the statement. If $c\in M$ is arbitrary, define $c_s^0=c\cap (p_s\setminus q_s)$ and $c_s^1=(X\setminus c)\cap (p_s\setminus q_s)$. Let $\tilde{p}$ and $\tilde{q}$ be the partitions generated by $ac$ and $bg(c)$, respectively. Given $s\in 2^n$, let $s0$ and $s1$ denote the two obvious extensions of $s$ to sequences in $2^{n+1}$. Then for $i\in\{0,1\}$ we have, by construction of $g$,
$$\tilde{p}_{si}\triangle \tilde{q}_{si} = c_s^i \cup gc_s^i,$$
and thus, since $c_s^0$, $c_s^1$, $gc_s^0$, and $gc_s^1$ are disjoint,
\begin{align*}
d_P(ac,bg(c)) & = \tfrac{1}{2}\sum_{r\in 2^{n+1}}\mu(\tilde{p}_r\triangle\tilde{q}_r) = \tfrac{1}{2}\sum_{s\in 2^n}\mu(c_s^0\cup gc_s^0)+\mu(c_s^1\cup gc_s^1) \\
& =\tfrac{1}{2}\sum_{s\in 2^n}\mu(c_s^0\cup c_s^1)+\mu(gc_s^0\cup gc_s^1) =\tfrac{1}{2}\sum_{s\in 2^n}\mu(p_s\triangle q_s) = d_P(a,b).
\end{align*}
In general, if $c\in M^m$, we define $c_s^i$ for each $i\in 2^m$ as the intersection of $(p_s\setminus q_s)$ with the $i$-th element of the partition generated by $c$, and a similar calculation yields the desired conclusion.

Finally, $\partial(g,1_G)=\sup_{c\in M}d(c,gc)\leq \sup_{c\in M}d_P(ac,bg(c))= d_P(a,b)$.
\end{proof}

\begin{prop}
The probability measure algebra $M$ is uniformly equi-homogeneous.
\end{prop}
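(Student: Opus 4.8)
The plan is to reduce everything to Lemma~\ref{l:d_P-equality}, using it together with the metric equivalence~(\ref{eq:d_P-inequalities}) between the $\max$-distance $d$ and the partition distance $d_P$ on the finite powers $M^n$. Recall that uniform equi-homogeneity (Definition~\ref{D:equi-homogeneous}(2)) asks, for each fixed $n\in\mbN$ and $\epsilon>0$, for a single $\delta>0$ that works for \emph{all} $a\in M^n$ simultaneously: $B_\delta(a)\cap Ga\subseteq G_\epsilon a$. Since all the quantitative information we need is already packaged in Lemma~\ref{l:d_P-equality}, the argument should be short.

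Concretely, fix $n\in\mbN$ and $\epsilon>0$, and put $\delta\coloneqq\epsilon/(n2^{n-1})$. Let $a\in M^n$ be arbitrary and take any $b\in B_\delta(a)\cap Ga$. Being in the $G$-orbit of $a$, the tuple $b$ satisfies $a\equiv b$, so Lemma~\ref{l:d_P-equality} yields an automorphism $g\in G$ with $ga=b$ and $\partial(g,1_G)\leq d_P(a,b)$. Now the right-hand inequality of~(\ref{eq:d_P-inequalities}) gives $d_P(a,b)\leq n2^{n-1}d(a,b)<n2^{n-1}\delta=\epsilon$, hence $\partial(g,1_G)<\epsilon$, i.e.\ $g\in G_\epsilon$. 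Therefore $b=ga\in G_\epsilon a$. As $a\in M^n$ and $b\in B_\delta(a)\cap Ga$ were arbitrary, this establishes $B_\delta(a)\cap Ga\subseteq G_\epsilon a$ for every $a\in M^n$, which is exactly uniform equi-homogeneity.

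I do not anticipate any real obstacle at this stage: the substantive work — the explicit construction of the measure-preserving map $g$ that swaps the symmetric-difference pieces $p_s\setminus q_s$ and $q_s\setminus p_s$ of the partitions generated by $a$ and $b$, and the verification that this forces $\partial(g,1_G)\le d_P(a,b)$ — has already been carried out in Lemma~\ref{l:d_P-equality}. The only point requiring a moment's care is bookkeeping with strict versus non-strict inequalities, but since the hypothesis $d(a,b)<\delta$ is already strict, the chain above delivers the strict bound $\partial(g,1_G)<\epsilon$ without further fuss. (One could equally take $\delta$ to be any positive number at most $\epsilon/(n2^{n-1})$.)
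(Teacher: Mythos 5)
Your proof is correct and is essentially the paper's own argument: the paper likewise takes $\delta=\epsilon/(n2^{n-1})$ and cites Lemma~\ref{l:d_P-equality} together with the comparison~(\ref{eq:d_P-inequalities}) between $d$ and $d_P$; you have merely written out the short chain of inequalities that the paper leaves implicit.
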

\begin{proof}
Fix $n\in\mbN$ and $\epsilon>0$, and take $\delta=\epsilon/n2^{n-1}$. The previous lemma and the discussion preceding it show that we have $B_\delta(a)\cap Ga\subseteq G_\epsilon a$ for every $a\in M^n$. (In fact, it is easy to see that $\delta=\epsilon/n$ works as well; see the proof of Proposition~\ref{p:randomizations}.)
\end{proof}

Let us denote by \PMPk\ the theory of probability measure-preserving $\mbF_k$-actions, by which we mean the same thing as the theory $(\APA_\forall)_k$ of probability algebras with $k$ automorphisms and their inverses. We conclude the following.

\begin{theorem}\label{thm:main-example}
For every $k\leq\omega$, the theory \PMPk\ admits a model completion, \PMPFk*, which is complete, stable and has quantifier elimination.

In addition, if $A,B,C$ are sets in a model of \PMPFk*, then:
\begin{enumerate}
\item\label{item:real-alg-closure} The (real) algebraic closure of $A$ is the substructure $\langle A\rangle$ generated by $A$.
\item\label{item:indep-in-PMPk*} We have $A\ind^{\mathPMPFk*}_{C}B$ iff $\langle A\rangle\ind_{\langle C\rangle}\langle B\rangle$, where $\ind$ is the independence relation for probability algebras.
\end{enumerate}
\end{theorem}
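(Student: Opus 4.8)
The plan is to obtain everything from the general machinery of Sections~\ref{s:main} and~\ref{s:particular-cases} applied to $T=\APA$, together with the identification between $(\APA_\forall)_k$ and $\APA_k$ coming from quantifier elimination. First I would check that $\APA$ falls under Assumption~\ref{assumption:main}: it is model complete (it even has quantifier elimination), $\aleph_0$-categorical, exact and $\aleph_0$-stable, hence superstable, all as recalled above; and its stable independence relation is continuously symmetric, because $M$ is uniformly equi-homogeneous by the preceding Proposition, hence equi-homogeneous, so that Lemma~\ref{l:from-homog-to-cont-sym} applies. The arbitrary-$k$ version of Theorem~\ref{thm:main} then yields a model completion $\APA_k^*$ of $\APA_k$. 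Since $\APA$ has quantifier elimination, Remark~\ref{rem:T2forall=Tforall2} gives $(\APA_k)_\forall \equiv (\APA_\forall)_k = \PMPk$ and tells us that $\APA_k^*$ is also the model completion of $\PMPk$; this is the theory we call $\PMPFk*$.

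Next I would establish completeness and quantifier elimination via the arbitrary-$k$ version of Corollary~\ref{cor:qe}. Completeness needs the joint embedding property for $\APA_k$: given two atomless pmp $\mbF_k$-systems $\mcY_1,\mcY_2$, the product system $\mcY_1\otimes\mcY_2$ with the product action is again atomless, and each $\mcY_i$ embeds into it (via $a\mapsto a\otimes 1$, resp.\ $1\otimes a$, which preserves measure and Boolean operations and commutes with the action). Quantifier elimination and stability need amalgamation of $(\APA_k)_\forall=\PMPk$, i.e.\ of probability algebras (atoms allowed) with $k$ automorphisms: given embeddings $\mcX\hookrightarrow\mcY_1$, $\mcX\hookrightarrow\mcY_2$, the relatively independent joining $\mcY_1\otimes_\mcX\mcY_2$ — the measure algebra generated by copies of $\mcY_1$ and $\mcY_2$ made conditionally independent over $\mcX$ — is a probability algebra extending both, and the automorphisms of $\mcY_1$ and $\mcY_2$, which agree on $\mcX$, induce a well-defined automorphism of the joining since conditional independence over $\mcX$ is preserved. (Amalgamating over the trivial algebra re-proves the joint embedding property.) Corollary~\ref{cor:qe} then gives that $\PMPFk*$ is complete, has quantifier elimination, is stable, and identifies its independence relation, which is item~\eqref{item:indep-in-PMPk*}.

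For item~\eqref{item:real-alg-closure} I would combine the description of algebraic closure in Corollary~\ref{cor:qe} with the standard fact that in $\APA$ the algebraic closure of a set coincides with the closed subalgebra it generates. The $\mcL_k$-substructure $\langle A\rangle$ generated by $A$ is the sub-$\mbF_k$-system generated by $A$, namely the smallest metrically closed subalgebra of $M$ containing $A$ and invariant under every $\tau_i^{\pm1}$; being a closed subalgebra, its algebraic closure in the reduct $M$ is itself, so $\acl(\langle A\rangle)\cap M=\langle A\rangle$, which is exactly what Corollary~\ref{cor:qe} identifies as the real algebraic closure of $A$ in $(M,\tau_1,\dots,\tau_k)$.

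I expect the only genuinely concrete point to be the amalgamation step: spelling out that the relatively independent joining actually lies in the class (a probability algebra, with atoms permitted) and carries a well-defined $\mbF_k$-action extending the two given ones. This is routine in ergodic theory but requires the conditional-independence bookkeeping to be made explicit; everything else is an application of the general theorems, with the $k=2$ statements used in their evident extension to all finite $k$.
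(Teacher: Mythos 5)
Your proposal is correct and follows essentially the same route as the paper: verify Assumption~\ref{assumption:main} for $\APA$ via uniform equi-homogeneity and Lemma~\ref{l:from-homog-to-cont-sym}, apply (the arbitrary-$k$ version of) Theorem~\ref{thm:main} together with Remark~\ref{rem:T2forall=Tforall2} to pass to \PMPk, and then invoke Corollary~\ref{cor:qe}, with the relatively independent joining giving amalgamation for \PMPk\ and the joining over the trivial factor giving joint embedding. Your added observation that $\acl(\langle A\rangle)\cap M=\langle A\rangle$ in a probability algebra, needed to read off item~\eqref{item:real-alg-closure} from Corollary~\ref{cor:qe}, is exactly the standard fact the paper leaves implicit.
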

\begin{proof}
The above results serve to verify the hypotheses of Theorem~\ref{thm:main} for $\APA_k$, and the conclusion applies also to \PMPk\ as per Remark~\ref{rem:T2forall=Tforall2}. We then use Corollary~\ref{cor:qe}: \PMPk\ has the amalgamation property, since any two pmp $\mbF_k$-systems $\mcX$ and $\mcY$ with a common factor $\mcZ$ can be merged together into their relatively independent joining, 
$\mcX\otimes_{\mcZ} \mcY$. (For a definition of the relatively independent joining in the standard separable setting see \cite[Ch.~6]{glaJoinings}; a definition for arbitrary systems is given, for instance, in \cite[\textsection 3]{ibatsaStrong}.) By considering the trivial factor we also see that \PMPk\ has the joint embedding property.
\end{proof}

In the special case $k=1$, as mentioned before, the existentially closed models are given by the aperiodic transformations, and moreover these are precisely the metrically generic automorphisms. These are consequences of Rokhlin's Lemma, which says that every aperiodic transformation is uniformly close to a cycle; see \cite[\textsection 18]{bbhu08}. The fact that the existentially closed transformations are metrically generic implies that \PMPF1* is \emph{approximately $\aleph_0$-categorical}: any two separable models of \PMPF1* can be conjugated arbitrarily close to one another, in the sense of the uniform metric $\partial$.

These properties of the model theory of $\mbZ$-actions on probability spaces can be generalized to actions of amenable groups by using the Ornstein--Weiss extension of Rokhlin's Lemma, as was observed a long while ago by the first and second authors in an unpublished note. Recently, Giraud \cite{girHyperfinite}, following results of Elek \cite{elekFinite}, further generalized this analysis to the case of hyperfinite pmp actions with a given IRS. In particular, the theory of pmp $\mbF_k$-actions with a fixed hyperfinite IRS is model complete and approximately $\aleph_0$-categorical.

\begin{question}
Is \PMPFk* approximately $\aleph_0$-categorical for $k>1$?
\end{question}

In \cite{BBM-topometric}, the authors show that there are metrically generic pmp $\mbF_k$-systems for every finite $k$. Thus, for such $k$ the previous question is equivalent to asking whether every model of \PMPFk* is metrically generic. A positive answer could be regarded as a form of Rokhlin's Lemma for the free group, in which the existentially closed systems (with their concrete description as given in Section~\ref{s:main}) take the role of the aperiodic transformations.

The paper \cite{BBM-topometric}, on the other hand, does not give a concrete example of a metrically generic system for $k>1$. In Section~\ref{s:profinite-completion} we will use and adapt their method to exhibit two concrete non-isomorphic, metrically generic pmp $\mbF_k$-actions, for arbitrary finite $k$.

\subsection[5B]{Actions on randomizations}

We end this section with the remark that the randomizations of classical, countably categorical structures are uniformly equi-homogeneous. We shall be brief and assume familiarity with randomizations of classical theories, in the sense of \cite{benkei}. For our purposes it is convenient to work with the one-sorted formalism, in which the \emph{event sort} is not included in the language; see \cite[Rmk.~2.12]{benkei}.
Let $M$ be a classical, countable $\mcL$-structure.  We fix an atomless Lebesgue space $\Omega=(X,\mu)$, and we denote by $M^\Omega$ the space of measurable functions from $X$ to $M$, identified up to measure zero. We endow $M^\Omega$ with the distance $d(f,f')=\mu\{x\in X:f(x)\neq f'(x)\}$, which is separable and complete. Then $M^\Omega$ is a metric structure in the language $\mcL^R$ containing a predicate $\mu\llbracket\varphi(x)\rrbracket$ for every $\mcL$-formula $\varphi(x)$, so that $\mu\llbracket\varphi(f)\rrbracket$ for $f\in (M^\Omega)^n$ is interpreted as the probability that the random variables $f$ satisfy $\varphi$. The \emph{randomized theory} $T^R=\Th(M^\Omega)$ always has quantifier elimination in the language $\mcL^R$. Moreover, if $T=\Th(M)$ is $\aleph_0$-categorical (respectively, $\aleph_0$-stable) then so is the theory $T^R$. See \cite{benkei} for these facts.

The automorphism group $\Aut(M^\Omega)$ can be naturally identified with the semidirect product $\Aut(M)^\Omega\rtimes\Aut(\Omega)$, where $\Aut(M)^\Omega$ is the group of measurable functions from $X$ to $\Aut(M)$ and $\Aut(\Omega)$ is the group of invertible measure-preserving transformations of $X$ (identified up to measure zero). See \cite{ibaRando}.

\begin{prop}\label{p:randomizations}
Let $M$ be a classical, $\aleph_0$-categorical structure. Then $M^\Omega$ is uniformly equi-homogeneous.
\end{prop}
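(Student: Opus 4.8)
The goal is to find, for each $n\in\mbN$ and $\epsilon>0$, a uniform $\delta>0$ --- we will take $\delta=\epsilon/2n$ --- such that whenever $f,f'\in (M^\Omega)^n$ lie in the same $\Aut(M^\Omega)$-orbit and $d(f,f')<\delta$, there is $g\in\Aut(M^\Omega)$ with $gf=f'$ and $\partial(g,1_G)<\epsilon$. Write $E=\{x\in X:f(x)\neq f'(x)\}$; since $d$ is the max-distance on $(M^\Omega)^n$ we have $\mu(E)<n\delta$, and $f=f'$ on $X\setminus E$. I will use the elementary remark that any $g\in\Aut(M^\Omega)=\Aut(M)^\Omega\rtimes\Aut(\Omega)$ that is \emph{supported on $E$} --- meaning $g$ acts as $1_{\Aut(M)}$ on the fibres over $X\setminus E$ and does not move $X\setminus E$ --- satisfies $\partial(g,1_G)\le\mu(E)$, because then $\{x:(g\xi)(x)\neq\xi(x)\}\subseteq E$ for every $\xi\in M^\Omega$. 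It thus suffices to produce a $g$ sending $f$ to $f'$ as a product of two factors, each supported on $E$; the factor $2$ in $\delta$ then yields $\partial(g,1_G)\le 2\mu(E)<2n\delta=\epsilon$, and $\delta$ will depend only on $n$ and $\epsilon$, as required for uniform equi-homogeneity.

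\emph{Step 1 (matching the fibrewise types).} Since $T=\Th(M)$ is $\aleph_0$-categorical, $S_n(T)$ is finite, say $\{p_1,\dots,p_m\}$, with each $p_j$ isolated by an $\mcL$-formula $\varphi_j$; put $A_j=\{x:M\models\varphi_j(f(x))\}$ and $B_j=\{x:M\models\varphi_j(f'(x))\}$, so $\{A_j\}_j$ and $\{B_j\}_j$ are measurable partitions of $X$. As $f$ and $f'$ have the same type in $M^\Omega$, $\mu(A_j)=\mu\llbracket\varphi_j(f)\rrbracket=\mu\llbracket\varphi_j(f')\rrbracket=\mu(B_j)$; and as $f=f'$ off $E$, $A_j\triangle B_j\subseteq E$, whence $\mu(A_j\cap E)=\mu(B_j\cap E)$ for each $j$. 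Because $\Omega$ is atomless, there is $\pi\in\Aut(\Omega)$, equal to the identity off $E$, with $\pi(B_j\cap E)=A_j\cap E$ for all $j$ (match up the pieces of the two partitions inside $E$). Regarding $\pi$ as an element of $\Aut(M^\Omega)$ and setting $f'':=\pi f'$, one checks that $f''=f$ off $E$ and that for a.e.\ $x$ the tuple $f''(x)$ realizes in $M$ the same type over $\emptyset$ as $f(x)$; moreover $\pi$ is supported on $E$.

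\emph{Step 2 (fibrewise conjugation).} Being countable and $\aleph_0$-categorical, $M$ is homogeneous (Ryll--Nardzewski): each set $O_j$ of realizations of $p_j$ in $M^n$ is a single $\Aut(M)$-orbit, and, $O_j$ being countable, we may fix $a_j\models p_j$ and a map $s_j\colon O_j\to\Aut(M)$ with $s_j(a)a_j=a$ for all $a\in O_j$. Define $\kappa\colon X\to\Aut(M)$ by $\kappa(x)=s_j(f''(x))\circ s_j(f(x))\inv$ if $x\in E$ and $f(x)\in O_j$ (both $f(x),f''(x)$ then lie in $O_j$ by Step 1), and $\kappa(x)=1_{\Aut(M)}$ otherwise; this $\kappa$ is measurable (composition of the measurable maps $x\mapsto f(x),f''(x)$ valued in the countable discrete set $O_j$ with $s_j$ and the continuous group operations of $\Aut(M)$), equals the identity off $E$, and satisfies $\kappa(x)(f(x))=f''(x)$ a.e. Let $g_0\in\Aut(M)^\Omega\subseteq\Aut(M^\Omega)$ be the automorphism determined by $\kappa$; then $g_0 f=f''$ and $g_0$ is supported on $E$. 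Setting $g=\pi\inv g_0$, a direct computation in the semidirect product gives $gf=\pi\inv f''=f'$, and since both $\pi\inv$ and $g_0$ are supported on $E$, the left and right invariance of $\partial$ give $\partial(g,1_G)\le\partial(\pi\inv,1_G)+\partial(g_0,1_G)\le 2\mu(E)<\epsilon$.

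I do not expect a real obstacle: the content is in the two reductions above, and the rest is bookkeeping. The points needing care are (i) verifying the semidirect-product composition so that $g=\pi\inv g_0$ genuinely sends $f$ to $f'$; (ii) checking that $g_0$ and $\pi$ are \emph{literally} supported on $E$, so that their $\partial$-distance to $1_G$ is bounded by $\mu(E)$ rather than merely by $1$; and (iii) the measure-preserving matching of the partitions $\{B_j\cap E\}$ and $\{A_j\cap E\}$ inside $E$, which is the only place atomlessness of $\Omega$ is used. The finiteness of $S_n(T)$ and the homogeneity of $M$ (hence the countability of the orbits $O_j$) are the only places where $\aleph_0$-categoricity of $M$ enters.
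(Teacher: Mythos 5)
Your proof is correct and follows essentially the same route as the paper's: the same $\delta=\epsilon/2n$, the same two-step decomposition into a measure-space automorphism supported on the disagreement set (matching the finitely many fibrewise types, using atomlessness of $\Omega$ and finiteness of $S_n(T)$) followed by a fibrewise conjugation in $\Aut(M)^\Omega$ (using homogeneity of $M$ and countability of the orbits for measurability), each contributing at most $\mu(E)<\epsilon/2$ to the uniform distance. The only difference is cosmetic bookkeeping: the paper writes $g=g_1g_0$ moving $f$ toward $f'$, while you transport $f'$ to $f''$ and take $g=\pi\inv g_0$.
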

\begin{proof}
Fix $n\in\mbN$ and $\epsilon>0$, and let $\delta=\epsilon/2n$. Take $f,f'\in (M^\Omega)^n$ with $f\equiv f'$ and $d(f,f')<\delta$ (recall that $d(f,f')=\max_{i<n}\mu\{x\in X:f_i(x)\neq f_i'(x)\}$). We want to find $g\in\Aut(M^\Omega)$ such that $gf=f'$ and $\partial(g,1_{\Aut(M^\Omega)})<\epsilon$.

We let $X_0=\{x\in X:f(x)\neq f'(x)\}$ (identifying $(M^\Omega)^n=(M^n)^\Omega$), which has measure $\mu(X_0)<\epsilon/2$. On the other hand, for every $p\in S_n(T)$ (where $T=\Th(M)$, and the type space is finite by categoricity), we let
$$X_p=\{x\in X:\tp(f(x))=p\}\cap X_0\text{ and }X'_p=\{x\in X:\tp(f'(x))=p\}\cap X_0.$$
Since $f\equiv f'$ and since $f$ and $f'$ coincide on $X\setminus X_0$, we have $\mu(X_p)=\mu(X'_p)$ for every $p\in S_n(T)$. Thus, we can choose $g_0\in\Aut(\Omega)$ such that $g_0(X_p)=X'_p$ (up to measure zero) for every $p\in S_n(T)$, and such that $g_0$ is the identity on $X\setminus X_0$. In particular, $\partial(g_0,1_{\Aut(M^\Omega)})<\epsilon/2$. Now, $\tp(g_0f(x))=\tp(f'(x))$ for every $x\in X$, and the images of $g_0f$ and $f'$ are countable. By homogeneity of $M$ we can easily find $g_1\in\Aut(M)^\Omega$ such that $g_1(x)(g_0f(x))=f'(x)$ for every $x\in X$, and moreover $g_1(x)=1_{\Aut(M)}$ for every $x\in X\setminus X_0$. In particular, $\partial(g_1,1_{\Aut(M^\Omega)})<\epsilon/2$. The automorphism $g=g_1g_0\in\Aut(M^\Omega)$ is then as desired.
\end{proof}

In particular, the randomization of a classical $\aleph_0$-categorical theory is $\aleph_0$-categorical and exact. Recalling that a classical, superstable, $\aleph_0$-categorical theory is $\aleph_0$-stable, we can conclude the following.

\begin{theorem}
Let $T$ be a classical, superstable, $\aleph_0$-categorical theory. Then for every $k\leq\omega$, the theory of $\mbF_k$-actions on models of the randomized theory $T^R$ has a model completion.
\end{theorem}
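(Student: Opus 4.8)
The plan is to verify that the randomized theory $T^R$ satisfies every hypothesis of Assumption~\ref{assumption:main} and then invoke Theorem~\ref{thm:main} (in its general-$k$ form, which, as remarked there, holds \emph{mutatis mutandis}, including for $k=\omega$). So the proof will be an assembly of Proposition~\ref{p:randomizations}, Lemma~\ref{l:from-homog-to-cont-sym}, and Theorem~\ref{thm:main}, together with the standard facts about randomizations recalled just before Proposition~\ref{p:randomizations}.

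First I would recall the classical fact that a superstable, $\aleph_0$-categorical theory in classical first-order logic is in fact $\aleph_0$-stable, so we may as well assume $T$ is $\aleph_0$-stable. By the properties of randomizations cited from \cite{benkei}, $T^R$ then has quantifier elimination in the language $\mcL^R$---hence is model complete---and is again $\aleph_0$-categorical and $\aleph_0$-stable; in particular $T^R$ is superstable, since every $\aleph_0$-stable metric theory is superstable (\cite[Thm.~4.13]{benCatcat}). It remains only to establish exactness of $T^R$ and continuous symmetry of its stable independence relation.

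Both of these follow from Proposition~\ref{p:randomizations}. That proposition gives that the separable model $M^\Omega$ is uniformly equi-homogeneous, hence equi-homogeneous, hence homogeneous (by the converse of Effros's Theorem, as noted after Definition~\ref{D:equi-homogeneous}). By the observation in Section~\ref{s:def-of-indep}, homogeneity of the separable model of an $\aleph_0$-categorical theory is equivalent to exactness; so $T^R$ is exact. And since $M^\Omega$ is stable, $\aleph_0$-categorical, and equi-homogeneous, Lemma~\ref{l:from-homog-to-cont-sym} yields that its stable independence relation is continuously symmetric. All the conditions of Assumption~\ref{assumption:main} now hold for $T^R$, and Theorem~\ref{thm:main} provides the desired model completion of the theory of $\mbF_k$-actions on models of $T^R$.

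I do not expect a genuine obstacle here: the statement is essentially a corollary of the already-proved Proposition~\ref{p:randomizations} fed into the machine of Section~\ref{s:main}. The only point requiring a moment's care is ensuring that the randomization inherits \emph{superstability} (not merely stability) from $T$; this is exactly why the hypothesis on $T$ is stated as superstable---equivalently, in the classical $\aleph_0$-categorical setting, as $\aleph_0$-stable---so that the passage through $\aleph_0$-stability of $T^R$ delivers superstability of $T^R$ as well.
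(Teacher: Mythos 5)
Your proposal is correct and follows essentially the same route as the paper: the theorem is obtained by feeding Proposition~\ref{p:randomizations} (uniform equi-homogeneity of $M^\Omega$, hence homogeneity and thus exactness, and continuous symmetry via Lemma~\ref{l:from-homog-to-cont-sym}) together with the cited facts that $T^R$ is $\aleph_0$-categorical, $\aleph_0$-stable (hence superstable) and has quantifier elimination, into Theorem~\ref{thm:main}, after the same reduction of classical superstable $\aleph_0$-categorical to $\aleph_0$-stable. No gaps.
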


In \cite{BeZa} it is pointed out, for finite $k$, that if $g\in \Aut(M)^k$ is generic (i.e., its diagonal conjugacy class is comeager), if $c_g\in (\Aut(M)^k)^\Omega$ is the constant function with value $g$, and $t\in \Aut(\Omega)^k$ is metrically generic, then $(c_g,t)\in (\Aut(M)^k)^\Omega \rtimes\Aut(\Omega)^k\cong \Aut(M^\Omega)^k$ is metrically generic in the randomization. So when $M$ has generic $\mbF_k$-actions we can build models of $(T^R)_k^*$ in this fashion. It is natural to ask whether, in general, the models of $(T^R)_k^*$ are precisely the pairs $(h,t)$ such that $(M,h(x))\models T_k^*$ for almost every $x$ and $t$ induces a model of \PMPFk*.

\noindent\hrulefill

\section{Examples of generic actions on a Lebesgue space}\label{s:profinite-completion}

In this section we give a concrete example of a metrically generic action of any finitely generated free group on the probability measure algebra of a Lebesgue space (in the sense of Section~\ref{s:metric-generics}) and thus, in particular, of an existentially closed such action.

Recall that the profinite completion $\wFk$ of $\mbF_k$ is the inverse limit of the system of finite group quotients $\mbF_k\to\Gamma$, with morphisms given by surjective homomorphisms $\Gamma_1\to\Gamma_2$ commuting with the quotient maps. The limit $\wFk$ is a compact group containing a dense copy of $\mbF_k$, and is thus endowed with the action $\mbF_k\actson\wFk$ by left multiplication, which preserves the Haar measure $\mu$---the unique invariant probability measure on $\wFk$.

We will show that the pmp system $\mbF_k\actson(\wFk,\mu)$ yields a metrically generic measure-preserving action of $\mbF_k$, for every $k<\omega$. Note that this is a significant strengthening of a result of Kechris \cite{kechrisWeak}, who showed that the diagonal conjugacy class of the system $\mbF_k\actson (\wFk,\mu)$ is \emph{dense} in $\Aut(\wFk,\mu)^k$. We will use Kechris's result in our proof. We are grateful to Todor Tsankov for suggesting  that we consider this example.

We will in fact provide two non-isomorphic examples of metrically generic actions, the other being the product of the action on $\wFk$ with the trivial action on an atomless Lebesgue space. The latter is more directly amenable to our method of proof, so we will consider it first.

A general strategy to produce metrically generic actions is the method of \emph{countable approximating substructures} developed by Ben Yaacov, Melleray and the first named author in \cite{BBM-topometric}. This method permits proving that a given action on a separable metric structure $M$ is metrically generic by showing that it restricts to a \emph{generic} action on an appropriate classical, countable structure $N$ that can be seen as a discrete version of $M$.

More precisely, if $M$ is a separable metric structure and $N$ is some classical, countable structure, we say, following \cite[Def.~5.3]{BBM-topometric}, that $N$ is a \emph{countable approximating substructure of $M$} if:
\begin{enumerate}
\item the domain of $N$ is a dense countable subset of $M$;
\item every automorphism of $N$ extends to an automorphism of $M$.
\end{enumerate}
Note that, under the first condition, an extension of an automorphism of $N$ to an automorphism of $M$ is necessarily unique. Thus if $N$ is a countable approximating substructure of $M$, we can see $H=\Aut(N)$ as a subgroup of $G=\Aut(M)$ (although the topology on $H$ is not the restriction of that of $G$). We say, furthermore, that $N$ is a \emph{good countable approximating substructure of $M$} if, in addition:
\begin{enumerate}
\item[(3)] for every $\epsilon>0$ and every subset $U\subseteq\Aut(N)$ that is open in the topology of $\Aut(N)$, the $\epsilon$-fattening $(U)_\epsilon\subseteq \Aut(M)$ with respect to the uniform metric of $\Aut(M)$ is open in the topology of $\Aut(M)$.
\end{enumerate}

To establish the last condition it is in fact enough to check that for every open neighborhood $V$ of the identity in $H$ the set $(V)_\epsilon$ contains an open neighborhood of the identity in $G$. Indeed, if then we are given an open subset $U\subseteq H$ and an arbitrary element $g\in (U)_\epsilon$, we can find $h\in U$, $\delta>0$,  and an open neighborhood of the identity $V\subseteq H$ such that $\partial(g,h)+\delta<\epsilon$ and $hV\subseteq U$. Then $g(V)_\delta$ is a neighborhood of $g$ in $G$ and $g(V)_\delta\subseteq (hV)_\epsilon\subseteq (U)_\epsilon$, showing that $g$ is in the interior of $(U)_\epsilon$.

On the other hand, since $N$ is a classical structure, a base of open neighborhoods of the identity in $H$ is given by the stabilizers $H_a=\{h\in H:ha=a\}$, ranging over the finite tuples $a\in N^n$. Thus, condition (3) above is equivalent to the following:
\begin{enumerate}
\item[(3')] for every $\epsilon>0$ and finite tuple $a\in N^n$, the set $(H_a)_\epsilon$ contains an open neighborhood of the identity in $G$.
\end{enumerate}

Now let us recall that, following \cite{trussGeneric,kecros,ivaGeneric} and others, a tuple of automorphisms $\bar{g}\in H^k$ (and the action $\mbF_k\actson N$ induced by it) is called \emph{generic} if the diagonal conjugacy class ${H\cdot\bar{g}}$ is comeager in $H^k$. This is indeed a particular case of the notion of metric generics discussed in Section~\ref{s:metric-generics}, because the uniform metric on the automorphism group of a classical structure is just the discrete metric.

Theorem~5.6 of \cite{BBM-topometric} then says the following.

\begin{theorem}\label{thm:good-app-substructures}
Suppose $N$ is a good countable approximating substructure of a separable metric structure $M$. If $\bar{g}\in \Aut(N)^k$ is generic, then its action on $M$ is metrically generic.
\end{theorem}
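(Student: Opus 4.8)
We must show that $\overline{(G\cdot\bar g)}^\partial$ is comeager in $G^k$, where now $\bar g$ is regarded, via the unique extension of each automorphism of $N$ (condition~(1)), as an element of $G^k$; write $H=\Aut(N)$ and $\iota\colon H\to G$ for the resulting inclusion, which is continuous by density of $N$ in $M$ and which intertwines the diagonal conjugation actions. I would first record two soft facts. Since the classical topology on $H$ is finer than the one inherited from $G$, $\iota$ need not be open, but $\iota(H\cdot\bar g)=\iota(H)\cdot\bar g\subseteq G\cdot\bar g$. And since $\partial$ is lower semicontinuous for the Polish topology of $G$, each uniform neighbourhood $(G\cdot\bar g)_{1/m}$ is analytic (a projection of an $F_\sigma$ set), hence has the Baire property; the same then holds for $\overline{(G\cdot\bar g)}^\partial=\bigcap_{m\ge 1}(G\cdot\bar g)_{1/m}$, and all of these sets are invariant under $G$-conjugation.

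The first substantive step is to prove that $\iota(H\cdot\bar g)$ is dense in $G^k$. Given a nonempty basic Polish-open $W\subseteq G^k$, I would approximate the finitely many parameters from $M$ defining $W$ by parameters from the dense set $N$ and transfer the corresponding constraint to a nonempty basic open set $V\subseteq H^k$ (for the classical topology) with $\iota(V)\subseteq W$; here density together with the goodness of $N$ is used to guarantee that the transferred constraint is actually realizable inside $\Aut(N)^k$. As $\bar g$ is generic, its conjugacy class is comeager, so $V\cap(H\cdot\bar g)\ne\emptyset$, and any $\bar g'$ in this intersection gives $\iota(\bar g')\in W\cap\iota(H\cdot\bar g)$. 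In particular $G\cdot\bar g$ is dense in $G^k$, i.e.\ the $G$-conjugation action on $G^k$ is topologically transitive.

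Next, for each $m$ I would show that $(G\cdot\bar g)_{1/m}$ is comeager. By the previous step and the topological $0$--$1$ law for continuous Polish group actions, a $G$-conjugation-invariant set with the Baire property is meager or comeager, so it suffices to rule out meagreness. This is where the goodness of $N$ is essential: by condition~(3'), for every finite tuple $a\in N^n$ the uniform $(1/m)$-fattening $(H_a)_{1/m}$ contains a Polish-open neighbourhood of $1_G$, and hence so does $(H_{h_0a})_{1/m}=\iota(h_0)(H_a)_{1/m}\iota(h_0)^{-1}$ for every $h_0\in H$. Feeding these Polish-open sets into the transfer argument of the previous step — starting from a true conjugate $\iota(\bar g')$ and perturbing it by conjugation with elements of stabilizers $H_a$ that lie uniformly close to $1_G$ — one produces, inside $(G\cdot\bar g)_{1/m}$, a set that is comeager in some nonempty open subset of $G^k$; being conjugation-invariant with the Baire property, $(G\cdot\bar g)_{1/m}$ is then comeager.

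Finally, $\overline{(G\cdot\bar g)}^\partial=\bigcap_{m\ge 1}(G\cdot\bar g)_{1/m}$ is a countable intersection of comeager sets, hence comeager, so the induced $\mbF_k$-action on $M$ is metrically generic. The crux — and the reason goodness rather than mere approximation is required — is this last transfer: comeagerness is a statement about the Polish topology of $G$, while the closure $\overline{(G\cdot\bar g)}^\partial$ is taken in the strictly finer uniform metric $\partial$. Condition~(3)/(3'), to the effect that uniform fattenings of basic classical neighbourhoods of $\Aut(N)$ are Polish-open in $\Aut(M)$, is exactly the bridge that lets the purely combinatorial genericity of $\bar g$ in $\Aut(N)$ be upgraded to a Baire-category statement about uniform approximation in $\Aut(M)$; getting the bookkeeping of this upgrade exactly right is the main obstacle.
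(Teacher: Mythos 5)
Your high–level plan — pass to the fattenings $(G\cdot\bar g)_{1/m}$, argue each is comeager, and intersect — is a reasonable way to attack the statement, and your preliminary observations (analyticity, hence the Baire property, of $(G\cdot\bar g)_{1/m}$; conjugation–invariance) are correct. However, the two substantive steps are not actually carried out, and one of them can be short-circuited in a way you overlook.

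First, the density step. After replacing the parameters of a basic open $W\subseteq G^k$ by nearby parameters from the dense set $N$, what you really need is that the resulting basic $\Aut(N)^k$–open set be \emph{nonempty}; that is, you need $\iota(\Aut(N)^k)$ to be dense in $\Aut(M)^k$. Density of the domain $N$ in $M$ (condition (1)) does not give this, nor do conditions (2) and (3): you need additional input about the richness of $\Aut(N)$. In the paper this is exactly what is established by hand in Propositions~\ref{p:H-unif-dense-in-G} and \ref{p:Ek-unif-dense-in-G}, and density is listed as an explicit hypothesis in the variant Theorem~\ref{thm:good-app-substructures-variant}. You should either derive density of $\Aut(N)^k$ in $\Aut(M)^k$ from the stated conditions (I do not see how) or flag it as an input.

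Second, the non-meagerness step. Your description --- ``perturbing it by conjugation with elements of stabilizers $H_a$ that lie uniformly close to $1_G$'' --- does not describe a correct construction: elements of $H_a$ need not be $\partial$-close to $1_G$, and nothing concrete is produced. The essential observation you are missing is that a comeager diagonal conjugacy class is automatically \emph{open}: since $H\cdot\bar g$ is non-meager and dense in $\Aut(N)^k$, the Effros theorem gives that it is open in $\Aut(N)^k$. Condition (3) (extended to finite products, as it is, since the uniform metric on $G^k$ is the max of coordinates and fattening commutes with products and unions) then makes $(H\cdot\bar g)_\eta$ a \emph{Polish-open} subset of $\Aut(M)^k$; once one has density of $\Aut(N)^k$ in $\Aut(M)^k$ this set is also dense, because $H\cdot\bar g$ meets every set of the form $W\cap\Aut(N)^k$ with $W$ open. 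Then $\overline{H\cdot\bar g}^\partial=\bigcap_m(H\cdot\bar g)_{1/m}$ is a countable intersection of dense open sets, hence comeager, and it is contained in $\overline{G\cdot\bar g}^\partial$. In particular, no zero-one law and no Baire-property argument is needed at all; this is simpler and more self-contained than your route, and it is where condition (3) actually does its work.

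So: the strategy is defensible and not wrong in spirit, but both central steps are handwaved, the density hypothesis deserves to be made explicit, and the key fact that the comeager orbit is open --- which is what makes condition (3) bite directly --- is absent from your argument.
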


In our case, the metric structure $M$ that we wish to consider is the probability measure algebra of an atomless Lebesgue space $(X,\mu)$, which for now it is convenient to see as the unit interval $X=\mbI=[0,1]$ with the Lebesgue measure.  The approximating substructure, in turn, will be the (measured) Boolean algebra $N$ generated within $M$ by the \emph{rational subintervals} of $\mbI=[0,1]$. As earlier, we denote by $G$ and $H$ the automorphism groups of $M$ and $N$, respectively.  (Later we will rather see $(X,\mu)$ as the profinite completion $\wFk$ with its Haar measure, or as the product $\wFk\otimes \mbI$.)

Before arguing that $N$ is a good approximating substructure of $M$, let us say a few words about $N$ itself. We see $N$ as a classical structure in the language $\{\cap,\cup,\triangle,{\bf 0},{\bf 1}\}$ of Boolean algebras expanded with a (countable) family of unary predicates $\{\mu_r\}_{r\in\mbQ\cap [0,1]}$, so that $N\models \mu_r(a)$ iff $\mu(a)=r$. The structure $N$ is in fact the \emph{Fraïssé limit} of the class of finite probability measure algebras with rational-valued measures; see \cite[Prop.~2.4]{kecros}. Kechris and Rosendal denote this Fraïssé class by $\MBAQ$. In particular, $N$ is ultrahomogeneous, so every two partitions of $[0,1]$ by elements of $N$ with same corresponding measures can be mapped one to another by some $h\in H$.

The domain of $N$ is a dense subset of $M$. Moreover, for any tuple $a\in M^n$ such that the elements of the algebra generated by $a$ have \emph{rational measures}, and for any $\epsilon>0$, one can produce $b\in N^n$ such that $d(a,b)<\epsilon$ and $a\equiv b$. It is as well clear that every automorphism of $N$ extends to a unique automorphism of $M$. By an abuse of notation, we will identify $H$ with the subgroup of $G$ given by all these extensions, but keeping in mind that the topologies are not the same.

To see that $N$ is a good approximating substructure of $M$, we are left to show that for every $\epsilon>0$ and every stabilizer subgroup $H_a=\{h\in H:ha=a\}$, $a\in N^n$, the $\epsilon$-fattening $(H_a)_\epsilon$ is open in $G$ (or simply, that it is a neighborhood of the identity). There is no difference, as for checking this condition, between our case and the approximating substructure considered in \cite[\textsection 6.3]{BBM-topometric}, namely the Boolean algebra generated by dyadic intervals (and for their purposes, they could as well have considered the one generated by rational intervals). However, as all details are omitted there, we give a complete argument here.

We begin by noting that Lemma~\ref{l:d_P-equality} holds also for $N$ and $H$.

\begin{lem}\label{l:d_P-equality-for-N}
Let $a,b\in N^n$ be tuples with $a\equiv b$. Then there is $h\in H$ such that $ha=b$ and, for every $c\in N$, we have $d_P(ac,bh(c))=d_P(a,b)$.

As a consequence, if we are given $b\in N^n$, $c\in N^n$ and $g\in G$ such that $b\equiv c$ and $d_P(gb,c)<r$ for some $r>0$, then for every $b'\in N$ there is $c'\in N$ such that $bb'\equiv cc'$ and $d_P(g(bb'),cc')<r$.
\end{lem}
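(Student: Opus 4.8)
The plan is to obtain the first assertion by rerunning the proof of Lemma~\ref{l:d_P-equality} essentially verbatim, the only new issue being to check that the automorphism it produces can be chosen inside $H=\Aut(N)$; the ``consequence'' will then follow from Lemma~\ref{l:d_P-equality} itself (applied in $M,G$) together with a routine approximation, using that $N$ is dense in $M$ and that every element of $N$ has rational measure. For the first assertion I would let $p=(p_s)_{s\in 2^n}$, $q=(q_s)_{s\in 2^n}$ be the partitions of $[0,1]$ generated by $a,b$; since $a,b\in N^n$ the sets $p_s\cap q_s$, $p_s\setminus q_s$, $q_s\setminus p_s$ all lie in $N$, and $a\equiv b$ gives $\mu(p_s)=\mu(q_s)$, hence $\mu(p_s\setminus q_s)=\mu(q_s\setminus p_s)$, all of these measures being rational. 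Setting $e=\bigcup_s(p_s\cap q_s)\in N$, so that $e^c$ equals both $\bigcup_s(p_s\setminus q_s)$ and $\bigcup_s(q_s\setminus p_s)$, I would \emph{not} invoke ultrahomogeneity of the Fra\"iss\'e limit $N$ (which would only fix each $p_s\cap q_s$ setwise, leaving a nonnegative correction term in the computation of Lemma~\ref{l:d_P-equality} and hence failing the claimed equality); instead I would use the Boolean product decomposition $N\cong N{\upharpoonright}_e\times N{\upharpoonright}_{e^c}$ (as measured Boolean algebras) and take $h$ to be the identity on $N{\upharpoonright}_e$ and on $N{\upharpoonright}_{e^c}$ a measure-preserving automorphism $h'$ with $h'(p_s\setminus q_s)=q_s\setminus p_s$ for all $s$. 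Such an $h'$ exists because $N{\upharpoonright}_{e^c}$ is a countable, atomless, measured Boolean algebra in which every element splits into subelements of arbitrary prescribed rational measures, so, just as in the proof that $N$ is the Fra\"iss\'e limit of $\MBAQ$, a back-and-forth makes every measure-preserving isomorphism between finite subalgebras extend to an automorphism, and $(p_s\setminus q_s)_s$, $(q_s\setminus p_s)_s$ are partitions of $e^c$ into $N$-elements with matching rational measures.

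Then $h(p_s)=(p_s\cap q_s)\cup(q_s\setminus p_s)=q_s$, so $ha=b$, and, viewed inside $G$, this $h$ is exactly of the shape of the map built in the proof of Lemma~\ref{l:d_P-equality} — the identity on $e=\bigcup_s(p_s\cap q_s)$, carrying each $p_s\setminus q_s$ onto $q_s\setminus p_s$ — so that computation applies word for word and gives $d_P(ac,bh(c))=d_P(a,b)$ for every $c\in N$ (indeed for every $c\in M$), whence $\partial(h,1_G)\le d_P(a,b)$.

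For the consequence, I would argue as follows. Given $b,c\in N^n$, $g\in G$ with $b\equiv c$ and $d_P(gb,c)<r$, and $b'\in N$: since $gb\equiv b\equiv c$, Lemma~\ref{l:d_P-equality} provides $f\in G$ with $f(gb)=c$ and $d_P\bigl((gb)x,c\,f(x)\bigr)=d_P(gb,c)$ for all $x\in M$. Putting $c_0=f(gb')\in M$, one gets $cc_0=f(gb)\,f(gb')=f\bigl(g(bb')\bigr)\equiv g(bb')\equiv bb'$, so $cc_0\equiv bb'$ (in particular every atom of the algebra generated by $cc_0$ has rational measure), and, taking $x=gb'$ above, $d_P\bigl(g(bb'),cc_0\bigr)=d_P(gb,c)<r$. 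It then remains to pull $c_0$ back into $N$: for each atom $Q\in N$ of the finite algebra generated by $c$, I would approximate $c_0\cap Q$ within arbitrarily small $d$-distance by some $u_Q\in N$ with $u_Q\subseteq Q$ and $\mu(u_Q)=\mu(c_0\cap Q)$ (first approximate using density of $N$ in $M$, then correct the measure by adding or removing a small $N$-piece inside $Q$, using that $N{\upharpoonright}_Q$ has elements of every rational measure $\le\mu(Q)$ and that $\mu(c_0\cap Q)$ is rational), and then set $c'=\bigcup_Q u_Q\in N$. The atoms of $\langle cc'\rangle$ then have the same measures as those of $\langle cc_0\rangle$, so $cc'\equiv cc_0\,(\equiv bb')$, while $d(c_0,c')$ may be taken as small as desired; since $cc_0$ and $cc'$ differ only in the last coordinate, the inequality $d\le d_P\le (n+1)2^{n}d$ on $(n+1)$-tuples (cf.~(\ref{eq:d_P-inequalities})) lets me arrange $d_P(cc_0,cc')<r-d_P(gb,c)$, and then
\[d_P\bigl(g(bb'),cc'\bigr)\le d_P\bigl(g(bb'),cc_0\bigr)+d_P(cc_0,cc')<d_P(gb,c)+\bigl(r-d_P(gb,c)\bigr)=r,\]
which is what is wanted.

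The main obstacle will be the construction of $h$ in the first part: one has to stay inside $\Aut(N)$ and simultaneously preserve the \emph{exact} equality $d_P(ac,bh(c))=d_P(a,b)$, and exactness forces $h$ to be the identity on the relative algebra $N{\upharpoonright}_e$ — which is precisely why ultrahomogeneity of $N$ alone is not enough and the product decomposition $N\cong N{\upharpoonright}_e\times N{\upharpoonright}_{e^c}$ (together with homogeneity of the factor $N{\upharpoonright}_{e^c}$) is needed. Everything else, including the consequence, is comparatively routine once Lemma~\ref{l:d_P-equality} is in hand, the only mild care being the rational-measure approximation that keeps the first $n$ coordinates fixed at~$c$.
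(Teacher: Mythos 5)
Your proof is correct. For the main assertion you are in fact doing what the paper does: its proof is simply ``as in Lemma~\ref{l:d_P-equality}, \emph{mutatis mutandis}, using the ultrahomogeneity of $N$'', which amounts to repeating the piecewise construction---identity on $e=\bigcup_s(p_s\cap q_s)$ and, on each $p_s\setminus q_s$, a measure-preserving isomorphism onto $q_s\setminus p_s$---with the point maps $g_s$ replaced by isomorphisms of the corresponding relative algebras, whose existence is what homogeneity/back-and-forth provides; your product decomposition $N\cong N{\upharpoonright}_e\times N{\upharpoonright}_{e^c}$ is exactly the right way to make this precise, and your caveat that naively applying ultrahomogeneity to map $a$ to $b$ would not yield the exact equality is well taken, since being the identity on $N{\upharpoonright}_e$ is essential for the computation. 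Where you genuinely diverge is in the ``consequence'': the paper first uses density, together with the rationality of the measures of the atoms of the algebra generated by $g(bb')$, to choose $ee'\in N^{n+1}$ with $ee'\equiv g(bb')$ and $d_P\bigl(ee',g(bb')\bigr)<\tfrac12\bigl(r-d_P(gb,c)\bigr)$, and then applies the $N$-version of the main assertion to get $h\in H$ with $h(e)=c$ and $d_P\bigl(ee',ch(e')\bigr)=d_P(e,c)$, concluding with the triangle inequality for $c'=h(e')$; you instead apply the $M$-version (Lemma~\ref{l:d_P-equality}) to $gb\equiv c$ to produce an exact witness $c_0=f(gb')\in M$ with $cc_0\equiv bb'$ and $d_P\bigl(g(bb'),cc_0\bigr)=d_P(gb,c)$, and only then pull $c_0$ back into $N$ by a measure-exact approximation below the atoms of the algebra generated by $c$. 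Both routes work: the paper's is a bit shorter because the only approximation it needs (a same-type approximation in $N$ of a tuple whose generated algebra has rational measures) was already recorded earlier in the section, whereas yours trades that for the explicit measure-correction step, which you carry out correctly---the rationality of $\mu(c_0\cap Q)$, the goodness of the measure on $N$, and the comparison $d_P\le (n+1)2^{n}d$ on $(n+1)$-tuples give exactly the slack $r-d_P(gb,c)$ needed to stay strictly below $r$.
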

\begin{proof}
The main assertion is proved exactly as in Lemma~\ref{l:d_P-equality}, \emph{mutatis mutandis}, using the ultrahomogeneity of $N$.

For the second part, let $b,c,g,r,b'$ be as in the statement. Choose by density some $e\in N^n$ and $e'\in N$ such that $d_P(ee',g(bb'))<\frac{1}{2}(r-d_P(gb,c))$ and $ee'\equiv g(bb')$, which is possible because the algebra generated by $g(bb')$ has rational measures. In particular, $d_P(e,gb)<\frac{1}{2}(r-d_P(gb,c))$ and $e\equiv c$. By the main assertion, there is $h\in H$ such that $h(e)=c$ and $d_P(ee',ch(e'))=d_P(e,c)$. Thus, if we let $c'=h(e')\in N$, we have $bb'\equiv ee'\equiv cc'$ and
$$d_P(g(bb'),cc')\leq d_P(g(bb'),ee')+d_P(e,c)\leq d_P(g(bb'),ee')+d_P(e,gb) +d_P(gb,c)<r,$$
as desired.
\end{proof}

We deduce the following.

\begin{prop}\label{p:H-unif-dense-in-G}
The group $H$ is uniformly dense in $G$. More generally, $\ov{H_a}^\partial=G_a$ for every finite tuple $a\in N^n$.
\end{prop}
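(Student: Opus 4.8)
The plan is to prove the two inclusions of $\ov{H_a}^\partial=G_a$ separately; the first assertion of the proposition is then the special case $a=\emptyset$, where $G_a=G$. The inclusion $\ov{H_a}^\partial\subseteq G_a$ is immediate: every $h\in H_a$ fixes $a$ and so lies in $G_a$, and $G_a$ is closed for the uniform metric because $g\mapsto d(ga,a)$ is $1$-Lipschitz for $\partial$ (from $d(ga,a)\le d(ga,ha)+d(ha,a)\le\partial(g,h)+d(ha,a)$). So the substance is the reverse inclusion: given $g\in G_a$ and $\epsilon>0$, I must produce $h\in H_a$ with $\partial(g,h)<\epsilon$.

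First I would record an elementary consequence of the density of $N$ in $M$: if $h\in\Aut(N)$ and $\mu(gc\triangle hc)\le r$ for every $c\in N$, then $\partial(g,h)\le r$. Indeed, for arbitrary $c\in M$ and any $c'\in N$ one has $\mu(gc\triangle hc)\le 2\mu(c\triangle c')+\mu(gc'\triangle hc')$ because $g$ and $h$ preserve $\mu$, and $\mu(c\triangle c')$ can be made arbitrarily small. It therefore suffices to construct $h\in\Aut(N)$ with $h(a)=a$ and $d_P(gc,hc)\le r$ for all $c\in N$, for a fixed $r<\epsilon$ (recall that $d_P$ coincides with $d=\mu(\cdot\triangle\cdot)$ on single elements, and is monotone for concatenation of tuples, so controlling $d_P$ on tuples controls $d$ on their coordinates).

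Such an $h$ I would obtain by a back-and-forth argument along an enumeration of the countable set $N$. The invariant at a finite stage is a finite partial isomorphism of $N$, i.e.\ a pair of tuples $b\equiv c$ together with the bound $d_P(gb,c)<r$; one starts from the partial isomorphism $a\mapsto a$, which is legitimate since $g\in G_a$ gives $ga=a$, hence $d_P(ga,a)=0<r$. The ``forth'' step, forcing a prescribed element of $N$ into the domain, is precisely the consequence part of Lemma~\ref{l:d_P-equality-for-N}; the ``back'' step, forcing an element into the range, is that same lemma applied to $g\inv$ in place of $g$, using the $G$-invariance of $d_P$ to pass between $d_P(gb,c)$ and $d_P(b,g\inv c)$. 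Both steps preserve the invariant $d_P(g(\cdot),\cdot)<r$, so the resulting automorphism $h=\bigcup$ of these partial maps satisfies $d_P(gc,hc)\le r$, that is $\mu(gc\triangle hc)\le r$, for every $c\in N$; by the reduction of the previous paragraph, $\partial(g,h)\le r<\epsilon$. Since $a$ is fixed pointwise throughout, $h\in H_a$, as required.

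The one point that genuinely matters---and the reason the argument is run with $d_P$ rather than with $d$---is the \emph{exact} equality in Lemma~\ref{l:d_P-equality-for-N}: it is what lets the invariant at every one of the infinitely many stages be the same strict inequality $d_P(\cdot,\cdot)<r$, with no error accumulating; with the $\max$-metric $d$ one would lose a constant factor at each extension and the back-and-forth would not close up. The remaining verifications---that the union of the finite partial isomorphisms is a bijection of $N$ respecting the Boolean operations and the predicates $\mu_r$---are the routine bookkeeping of a back-and-forth for an ultrahomogeneous structure.
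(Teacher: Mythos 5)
Your proof is correct and follows essentially the same route as the paper: a back-and-forth along $N$ whose invariant is a finite partial isomorphism fixing $a$ together with the bound $d_P(g(\cdot),\cdot)<\epsilon$, with both extension steps supplied by the second part of Lemma~\ref{l:d_P-equality-for-N} (the back step via $g\inv$ and $G$-invariance of $d_P$). The extra details you spell out — the trivial inclusion $\ov{H_a}^\partial\subseteq G_a$ and the reduction of $\partial$ to a supremum over the dense set $N$ — are exactly what the paper leaves implicit.
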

\begin{proof}
Let $a\in N^n$, $g\in G_a$, and $\epsilon>0$, and consider the family
$$F=\{(b,c)\in N^m\times N^m: m\in\mbN,b\equiv_a c,d_P(ag(b),ac)<\epsilon\}.$$
We may see a pair $(b,c)\in F$ as a partial isomorphism $p$ of $N$ that sends $b$ to $c$ and fixes~$a$, and satisfies the condition $d_P(ag(b),ap(b))<\epsilon$. We claim that this family of partial automorphisms has the back-and-forth property. Indeed, this follows immediately from the second part of the previous lemma.

Therefore we can construct an automorphism $h\in H$ such that $ha=a$ and $d_P(hb,gb)\leq\epsilon$ for every $b\in N$, and we conclude that $H_a$ is uniformly dense in $G_a$.
\end{proof}

Given $a\in M^n$ and $\epsilon>0$, let us consider the sets
$$G_{a,\epsilon}^P = \{g\in G: d_P(ga,a)<\epsilon\},$$
which form a basis for the Polish topology of $G$.

\begin{cor}\label{c:eps-fatts-are-open}
For every $a\in N^n$ and $\epsilon>0$ we have $G_{a,\epsilon}^P\subseteq (G_a)_\epsilon = (H_a)_\epsilon$.

The $\epsilon$-fattening of any open subset $U\subseteq H$ is open in $G$.
\end{cor}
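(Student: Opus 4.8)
The plan is to read off both assertions from Lemma~\ref{l:d_P-equality} and Proposition~\ref{p:H-unif-dense-in-G}, together with the purely formal fattening manipulations already recorded in the discussion preceding the statement; no new ideas are needed.

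First I would prove the chain $G_{a,\epsilon}^P\subseteq (G_a)_\epsilon = (H_a)_\epsilon$. For the equality, note that $(H_a)_\epsilon\subseteq (G_a)_\epsilon$ trivially since $H_a\subseteq G_a$; conversely, if $g\in (G_a)_\epsilon$ then $\partial(g,g'')<\epsilon$ for some $g''\in G_a$, and since $G_a=\ov{H_a}^\partial$ by Proposition~\ref{p:H-unif-dense-in-G} we may pick $h\in H_a$ with $\partial(g'',h)<\epsilon-\partial(g,g'')$, so that $\partial(g,h)<\epsilon$ and hence $g\in (H_a)_\epsilon$. For the inclusion, let $g\in G_{a,\epsilon}^P$, so that $d_P(ga,a)<\epsilon$. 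Since $g$ is an automorphism we have $ga\equiv a$, so Lemma~\ref{l:d_P-equality} applied to the tuples $ga$ and $a$ produces $\theta\in G$ with $\theta(ga)=a$ and $\partial(\theta,1_G)\leq d_P(ga,a)<\epsilon$. Then $g'\coloneqq\theta g$ satisfies $g'a=\theta(ga)=a$, hence $g'\in G_a$, and by right-invariance of $\partial$ we get $\partial(g,g')=\partial(1_G,\theta)<\epsilon$, i.e.\ $g\in (G_a)_\epsilon$.

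Next I would observe that $G_{a,\epsilon}^P$ is a basic open neighborhood of the identity of $G$ (it contains $1_G$ because $d_P(a,a)=0$), so the chain just proved shows that $(H_a)_\epsilon$ contains an open neighborhood of the identity of $G$ for every $\epsilon>0$ and every finite tuple $a\in N^n$; this is precisely condition~(3') from the discussion above. As explained there, (3') is equivalent to condition~(3), which is the statement that the $\epsilon$-fattening of any open $U\subseteq H$ is open in $G$. This yields the second assertion and completes the proof. I do not expect a genuine obstacle: the only step with any content is the passage, via the ``in particular'' clause of Lemma~\ref{l:d_P-equality}, from $d_P$-closeness of $ga$ to $a$ to $\partial$-closeness of $g$ to the stabilizer $G_a$, and everything else is the routine fattening bookkeeping already carried out in the running text.
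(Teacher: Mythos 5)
Your proof is correct and follows essentially the same route as the paper: the equality $(G_a)_\epsilon=(H_a)_\epsilon$ from Proposition~\ref{p:H-unif-dense-in-G}, the inclusion $G_{a,\epsilon}^P\subseteq (G_a)_\epsilon$ via the ``in particular'' clause of Lemma~\ref{l:d_P-equality} together with right-invariance of $\partial$, and the second assertion via the equivalence of conditions (3) and (3'). The only difference is that you spell out the triangle-inequality bookkeeping that the paper leaves implicit, which is harmless.
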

\begin{proof}
By the previous proposition we have $(G_a)_\epsilon = (H_a)_\epsilon$. Now let $g\in G_{a,\epsilon}^P$ and consider $a'=ga$. By Lemma~\ref{l:d_P-equality}, there is $h\in G$ such that $ha'=a$ and $\partial(h,1_G)\leq d_P(a',a)<\epsilon$. Hence $hg\in G_a$ and $\partial(hg,g)=\partial(h,1_G)<\epsilon$, showing that $g\in (G_a)_\epsilon$.

The rest follows from the equivalence between the conditions (3) and (3') in the definition of a good countable approximating substructure.
\end{proof}

We have proved:

\begin{prop}\label{p:N-goog-app-M}
The measured Boolean algebra $N$ is a good countable approximating substructure of the measure algebra $M$.
\end{prop}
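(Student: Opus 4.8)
The plan is simply to check, one by one, the three conditions in the definition of a good countable approximating substructure, all of which have essentially been prepared by the lemmas and corollary above. For condition (1), I would recall that the rational subintervals of $\mbI=[0,1]$ generate a dense Boolean subalgebra of $M$, and moreover that whenever the algebra generated by a tuple $a\in M^n$ has rational measures one can find $b\in N^n$ with $a\equiv b$ and $d(a,b)$ arbitrarily small; this is exactly the density observation made in the discussion preceding Lemma~\ref{l:d_P-equality-for-N}. For condition (2), the fact that every automorphism of $N$ extends (uniquely, by density) to an automorphism of $M$ was also already noted, and is precisely how we regard $H$ as a subgroup of $G$.

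The only point requiring the work done above is condition (3), which I would verify through its reformulation as condition (3'): it suffices to show that for every finite tuple $a\in N^n$ and every $\epsilon>0$ the uniform $\epsilon$-fattening $(H_a)_\epsilon$ contains an open neighborhood of the identity of $G$. But this is precisely the content of Corollary~\ref{c:eps-fatts-are-open}: the set $G_{a,\epsilon}^P=\{g\in G:d_P(ga,a)<\epsilon\}$ is a basic open neighborhood of $1_G$ in the Polish topology of $G$, and it is contained in $(G_a)_\epsilon=(H_a)_\epsilon$. Hence all three conditions hold and $N$ is a good countable approximating substructure of $M$.

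I do not expect any real obstacle at this stage: the proposition is a bookkeeping statement that packages together Lemma~\ref{l:d_P-equality}, Lemma~\ref{l:d_P-equality-for-N}, Proposition~\ref{p:H-unif-dense-in-G} and Corollary~\ref{c:eps-fatts-are-open}. The genuine difficulty lay earlier, in two places: establishing the \emph{exact} equality $d_P(ac,bg(c))=d_P(a,b)$ of Lemma~\ref{l:d_P-equality}, which is what makes the displacement $\partial(g,1_G)$ controllable and is the engine behind Corollary~\ref{c:eps-fatts-are-open}; and running the back-and-forth argument of Proposition~\ref{p:H-unif-dense-in-G} to obtain $\ov{H_a}^\partial=G_a$, so that fattenings by $H_a$ and by $G_a$ coincide. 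With those in hand the present statement is immediate.
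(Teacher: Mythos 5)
Your proposal is correct and follows the paper's own route exactly: conditions (1) and (2) are the density and automorphism-extension observations already made when introducing $N$, and condition (3) is verified through its reformulation (3') using Corollary~\ref{c:eps-fatts-are-open} (which itself rests on Lemma~\ref{l:d_P-equality}, Lemma~\ref{l:d_P-equality-for-N} and Proposition~\ref{p:H-unif-dense-in-G}). The paper indeed states this proposition simply as a summary of that chain of results, just as you describe.
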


Kechris and Rosendal proved by abstract means that there is a generic action $\mbF_k\actson N$; see \cite[Thm.~6.5]{kecros}. In the case of one automorphism ($k=1$), it follows from Akin \cite[Thm.~4.17]{akinGood} that the generic action is the one induced by the product of the trivial (i.e., identity) action and the universal adding machine (i.e., the map $x\mapsto x+1$ in the profinite completion $\widehat{\mbZ}$). We next give an explicit description of the generic action $\mbF_k\actson N$ that generalizes Akin's result.

Let $N_0=\Clop(\wFk)$ be the Boolean algebra of clopen subsets of the profinite completion of $\mbF_k$. We endow $N_0$ with the measure $\mu$ induced by the Haar measure of $\wFk$. The elements of $N_0$ are of the form $a_{\pi,S}=\pi\inv(S)$ where $\pi\colon\wFk\to \Gamma$ is a continuous homomorphism onto a finite group and $S\subseteq\Gamma$. It is easy to show from this description that 
$$\{\mu(a):a\in N_0\}=\mbQ\cap [0,1].$$
Moreover, $\mu$ is a \emph{good measure} on $N_0$, meaning that for every $a \in N_0$ and $s \in \mbQ\cap [0,\mu(a)]$ there exists $b \in N_0$ such that $b \subseteq a$ and $\mu(b)=s$.\footnote{Indeed, say $a=a_{\pi,S}$, $|S|=n$, $|\Gamma|=k$ (so that $\mu(a)=n/k$) and write $\Gamma=\mbF_k/K$ for some $K\trianglelefteq \mbF_k$. Take $s\leq\mu(a)$, $s=m/p$. Choose $L\trianglelefteq\mbF_k$ of finite index such that $np$ divides $[\mbF_k:L]$. Now consider $K' = K\cap L$. Then $[\mbF_k:L]$ (and $np$) divides $[\mbF_k:K']$. On the other hand, $\pi$ factors through $\pi'\colon\mbF_k\to \mbF_k/K'$, so inside the preimage $\pi\inv(\gamma)$ of any $\gamma\in\Gamma$ we can find a preimage of a subset of $\mbF_k/K'$ of size $m[\mbF_k:K']/np$, which is thus of measure $m/np$ (note that $m/np \leq 1/k$ as $s\leq \mu(a)$). By doing this for the preimage of each $\gamma\in S$ and taking the union, we obtain a clopen set $b\subseteq a$ with $\mu(b)=s$.}

Now, there is up to isomorphism just one countable measured Boolean algebra with these properties, which is indeed the Fraïssé limit of $\MBAQ$ (see, for instance, \cite[\textsection 6]{ibamelFull}).  Hence $N_0$ is isomorphic to $N$, and so is the product algebra $N_0\otimes N_1$ (with the product measure), where $N_1$ is yet another copy of $N$.

In what follows we identify $N=N_0\otimes N_1$, and thus also $H=\Aut(N_0\otimes N_1)$. The canonical left action $\mbF_k\actson\wFk$ on the profinite completion induces a dual action $\mbF_k\actson N_0$, and thus an action $\mbF_k\actson N$ that fixes every element of the subalgebra $\{{\bf 0},{\bf 1}\}\otimes N_1\subseteq N$.

We recall some basic facts and introduce some convenient notations:
\begin{itemize}[leftmargin=12pt]
\item Given a continuous homomorphism $\pi\colon\wFk\to \Gamma$ onto a finite group and $S\subseteq\Gamma$, we write $a_{\pi,S}=\pi\inv(S)$ as above. Let us denote by $A_\pi\coloneqq \{a_{\pi,S}\}_{S\subseteq\Gamma}$ the finite subalgebra of $N_0$ induced by $\pi$; its atoms are the elements $a_{\pi,\gamma}=\pi\inv(\gamma)$ for $\gamma\in\Gamma$. The group $\Gamma$ acts on $A_\pi$ by automorphisms, by $\gamma a_{\pi,S}=a_{\pi,\gamma S}$. Then the action $\mbF_k\actson N_0$ can be described as follows: if $a\in N_0$ belongs to $A_\pi$ for some $\pi$ as above, then $wa=\pi(w)a$ for every $w\in\mbF_k$; this does not depend on the choice of $\pi$.

\item The finite subalgebras $A_\pi\otimes B\subseteq N$ (for $\pi$ some finite quotient of $\wFk$ and $B$ some finite subalgebra of $N_1$) are invariant for the action $\mbF_k\actson N$, and every finite subset of $N$ is contained in some such subalgebra.

\item If $A\in\MBAQ$ and $\mbF_k\actson A$ is an ergodic action (i.e., one which is transitive on the set $X_A$ of atoms of $A$), we have a homomorphism $\mbF_k\to\Gamma$ onto a finite group $\Gamma\leq\Sym(X_A)$ (inducing an action $\mbF_k\actson\Gamma$) and a surjective $\mbF_k$-equivariant map $\Gamma\to X_A$, $\gamma\mapsto \gamma x_0$ (where $x_0\in X_A$ is any  atom of $A$). Hence we also have a surjective $\mbF_k$-equivariant map $\wFk\to X_A$, which is moreover continuous and measure-preserving. We deduce that the action $\mbF_k\actson N_0$ contains an isomorphic copy of every ergodic action $\mbF_k\actson A$ with $A\in\MBAQ$. In turn, as it follows easily, the action $\mbF_k\actson N$ contains a copy of \emph{every} action $\mbF_k\actson A$, $A\in\MBAQ$.

\item As is easy to see, any tuple of \emph{partial} automorphisms of a given algebra $A\in\MBAQ$ can be extended to a tuple of automorphisms of some larger finite algebra $B\in\MBAQ$. Moreover, one may assume that all atoms of $B$ have equal measure. This is called the \emph{Hrushovski property} of $\MBAQ$ in \cite[\textsection 6.2]{kecros}, where a proof is given, in the discussion before the statement of Theorem 6.5, page 333.
\end{itemize}

\begin{theorem}\label{thm:Fk-on-N-is-generic}
The action $\mbF_k\actson N_0 \otimes N_1 \cong N$ defined above is generic.
\end{theorem}
\begin{proof}
Recall that $N_0$ is the Boolean algebra of clopen subsets of the profinite completion $\wFk$
of $\mbF_k$ equipped with the restriction of the Haar measure, and $N_1 \cong N$.  The action in question is the product of the dual action of $\mbF_k$ on $N_0$ induced by the canonical left action $\mbF_k\actson\wFk$ and the identity action of $\mbF_k$ on $N_1$.

We fix a $k$-tuple $\bar{w}$ of free generators of $\mbF_k$. Let $\bar{g}\in H^k$ be the tuple of automorphisms of $N$ corresponding to $\bar{w}$ through the action $\mbF_k\actson N$, and let $\bar{p}\in\Aut(N_0)$ be the one obtained through the action $\mbF_k\actson N_0$, so that for every $a\in N_0$, $b\in N_1$, and $i=1,\dots, k$ we have $g_i(a\otimes b)=p_i(a)\otimes b$. We wish to show that the conjugacy class $H\cdot\bar{g}$ is comeager in~$H^k$.

We first check that it is dense. Take $\bar{f}\in H^k$ and $a\in N^n$, and let us find $h\in H$ such that $(h\cdot\bar{g})(a)= \bar{f}(a)$. By the Hrushovski property, there a finite algebra $A$ containing $a$ and $\bar{f}(a)$ together with an action $\mbF_k\actson A$ by automorphisms of $A$ that extends $\bar{f}$ on~$a$. Since our action $\mbF_k\actson N$ contains a copy of this finite action, there is, by ultrahomogeneity, some $h\in H$ such that $(h\cdot\bar{g})(a)=\bar{f}(a)$, as desired.

Now, by the general theory of Polish group actions (see \cite[\textsection 3]{kecros}, or \cite[Lem.~5.5]{ibameg}), and since $H\cdot\bar{g}$ is dense, showing that $H\cdot\bar{g}$ is comeager is equivalent to showing that for every neighborhood $U\subseteq H$ of the identity, $\bar{g}$ belongs to the interior of the closure $\ov{U\cdot\bar{g}}$. Therefore it is enough to prove the following: for every finite quotient $\pi\colon\wFk\to\Gamma$ and every finite subalgebra $B\subseteq N_1$, we have
$$\bar{g}H_{A_\pi\otimes B}^k\subseteq \ov{H_{A_\pi\otimes B}\cdot\bar{g}},$$
where $H_{A_\pi\otimes B}$ is the pointwise stabilizer of the subalgebra $A_\pi\otimes B\subseteq N$. We will identify $\Gamma$ with the corresponding subgroup of $\Aut(A_\pi)$.

Take then $\bar{f}\in \bar{g}H_{A_\pi\otimes B}^k$, that is, $\bar{f}\in H^k$ such that $\bar{f}|_{A_\pi\otimes B}=\bar{g}|_{A_\pi\otimes B}$. Let $C\subseteq N$ be a finite subalgebra extending $A_\pi\otimes B$. We want to find $h\in H_{A_\pi\otimes B}$ such that $\bar{f}|_C=(h\cdot\bar{g})|_C$. Again by the Hrushovski property (and ultrahomogeneity), up to modifying $\bar{f}$ outside $C$ and extending $C$, we may assume that $\bar{f}$ induces an action $\mbF_k\actson C$ by automorphisms. Let $\Gamma'$ be the group generated by $\bar{f}$ inside $\Aut(C)$, and let $\pi'\colon\wFk \to \Gamma'$ be the continuous homomorphism that maps $\bar{w}$ to $\bar{f}|_C$. Note that $\pi$ factors through $\pi'$, i.e., $\pi=\nu\circ\pi'$ where $\nu\colon \Gamma'\to\Gamma$ is the homomorphism that sends $\bar{f}|_C$ to $\bar{p}|_{A_\pi}$. Thus, $A_{\pi'}\supseteq A_\pi$.

Next we construct a finite extension $B'\supseteq B$. For each atom $c\in C$, let $o(c)\in N$ be the union of the translates of $c$ by $\Gamma'$. Note that if $b$ is the unique atom of $B$ such that $c\subseteq {\bf 1}\otimes b$, then also $o(c)\subseteq {\bf 1}\otimes b$, because $\Gamma'$ acts trivially on $B$. We consider the set
$$O=\{o(c):c\text{ is an atom of }C\}.$$
For each $o\in O$, let $b(o)$ be the unique atom of $B$ such that $o\subseteq b(o)$. Then choose a family $\{b_o\}_{o\in O}$ of disjoint elements of $B$ such that $b_o\subseteq b(o)$ and $\mu(b_{o})=\mu(o)$ for every $o\in O$. In other words, we choose $\{b_o\}_{o\in O}$ a partition of the unit of $N_1$ isomorphic to $O$ over $B\cong \{{\bf 0},{\bf 1}\}\otimes B$. We denote by $B'$ the algebra generated by this partition inside $N_1$. Clearly, $B'\supseteq B$.

Now we construct a subalgebra $C'\subseteq A_{\pi'}\otimes B'$ containing $A_\pi\otimes B$, together with an isomorphism $\sigma\colon C\to C'$ that acts as the identity on $A_\pi\otimes B$. Given $o\in O$, let $P(o)$ be the set of atoms $c\in C$ contained in $o$ (i.e., $P(o)$ is the $\Gamma'$-orbit of any $c$ with $o=o(c)$). For each $o\in O$, we choose some $c_0\in P(o)$ such that $c_o\subseteq a_{\pi,1_\Gamma}\otimes {\bf 1}$, where $1_\Gamma\in\Gamma$ is the identity element. On the other hand, given $c\in P(o)$, we set
$$S_c\coloneqq \{\gamma'\in\Gamma': \gamma' c_o=c\},$$
and
$$\sigma(c)\coloneqq a_{\pi',S_c}\otimes b_o \in A_{\pi'}\otimes B'\subseteq N.$$
Let $C'$ be the algebra generated by the elements $\sigma(c)$ with $c$ ranging over the atoms of $C$. Observe that, if $c,c'$ are disjoint atoms of $C$, then:
\begin{itemize}[leftmargin=20pt]
\item if $o(c)\neq o(c')$, then $b_{o(c)}$ and $b_{o(c')}$ are disjoint in $N_1$;
\item if $o(c)= o(c')$, then $S_c\cap S_{c'}=\emptyset$, so $a_{\pi',S_c}$ and $a_{\pi',S_{c'}}$ are disjoint in $N_0$;
\end{itemize}
In any case, $\sigma(c)$ and $\sigma(c')$ are disjoint in $N$. Now let $c\in C$ be an atom and $o=o(c)$. We have:
$$\mu(\sigma(c))=\mu(a_{\pi',S_c})\mu(b_o)=\frac{|S_c|}{|\Gamma'|}\mu(o)=\frac{|S_c|}{|\Gamma'|}|P(o)|\mu(c)=\mu(c).$$
We deduce that the elements $\sigma(c)$ are the atoms of $C'$, and that the map $c\mapsto\sigma(c)$ induces an isomorphism $\sigma\colon C\to C'$.

We argue that $\sigma$ is the identity on $A_\pi\otimes B$. Indeed, if $a_{\pi,\gamma}\otimes b$ is an atom of $A_\pi\otimes B$ containing some atom $c$ of $C$, then:
\begin{itemize}[leftmargin=20pt]
\item $a_{\pi',S_c}\subseteq a_{\pi,{\gamma}}$, because if $\gamma'\in S_c$ then $\gamma'$ necessarily maps $a_{\pi,{1_\Gamma}}$ to $a_{\pi,{\gamma}}$, so $\nu(\gamma')=\gamma$;
\item $b_o\subseteq b$, by construction.
\end{itemize}
So $a_{\pi,\gamma}\otimes b$ contains $\sigma(c)$ as well.

Finally, we check that $\sigma\bar{f}=\bar{g}\sigma$ on $C$. By construction, for each $i=1,\dots,k$ and every atom $c\in C$, we have
$$p_i(a_{\pi',S_c})=a_{\pi',\pi'(w_i)S_c}=a_{\pi',S_{f_i(c)}},$$
and thus
$$g_i(\sigma(c)) = p_i(a_{\pi',S_c})\otimes b_{o(c)} = a_{\pi',S_{f_i(c)}}\otimes b_{o(c)}=\sigma(f_i(c)),$$
as desired. By homogeneity, there is $h\in H$ extending the inverse $\sigma\inv\colon C'\to C$. Since $\sigma$ acts as the identity on $A_\pi \otimes B$, so does $h$, and it follows that $h\in H_{A_\pi\otimes B}$ and $\bar{f}|_C = (h\cdot \bar{g})|_C$, which concludes the proof.
\end{proof}

We are now able to exhibit an example of a metrically generic action on the probability measure algebra of a Lebesgue space. Indeed, by Proposition~\ref{p:N-goog-app-M} and Theorem~\ref{thm:good-app-substructures}, the action $\mbF_k\actson M$ induced by the generic action $\mbF_k\actson N$ is metrically generic. Note that the natural presentation of the action on $M$ induced by the action on $N$ discussed above is the dual of the pmp system
$$\mbF_k\actson \wFk\otimes \mbI,$$
where $\mbF_k$ acts on the profinite completion $\wFk$ by left multiplication, and trivially on $\mbI$.

\begin{theorem}\label{thm:metric-gen-profinite-times-trivial}
The pmp system $\mbF_k\actson \wFk\otimes\mbI$ is metrically generic, and in particular existentially closed.
\end{theorem}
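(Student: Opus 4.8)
The plan is to read the statement off from the results established above, combining Proposition~\ref{p:N-goog-app-M}, Theorem~\ref{thm:Fk-on-N-is-generic}, Theorem~\ref{thm:good-app-substructures} and Theorem~\ref{thm:metric-gen-is-exist-closed}. First I would make the relevant identifications explicit. With $N=N_0\otimes N_1$ as constructed above, the metric completion of $N$ (with respect to $d$, equivalently $d_P$) is the measure algebra $M$ of the Lebesgue space $\wFk\otimes\mbI$: indeed $N_0=\Clop(\wFk)$ is metrically dense in the measure algebra of $(\wFk,\mu)$, and $N_1$ is dense in the measure algebra of $\mbI$. Moreover the action $\mbF_k\actson N$ of Theorem~\ref{thm:Fk-on-N-is-generic} acts on $N_0$ as the dual of left translation on $\wFk$ and trivially on $N_1$, so its canonical extension to $M$ is precisely the system dual to $\mbF_k\actson\wFk\otimes\mbI$.

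With this in hand the metric genericity is immediate. By Proposition~\ref{p:N-goog-app-M}, $N$ is a good countable approximating substructure of $M$. By Theorem~\ref{thm:Fk-on-N-is-generic}, the tuple $\bar g\in\Aut(N)^k$ realizing the action $\mbF_k\actson N$ is generic in $\Aut(N)^k$. Hence Theorem~\ref{thm:good-app-substructures} applies and shows that the corresponding action $\mbF_k\actson M$ is metrically generic, i.e., the extension of $\bar g$ is a metric generic of $\Aut(M)^k$; by the previous paragraph this action is (the dual of) $\mbF_k\actson\wFk\otimes\mbI$, which proves the first assertion.

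For the second assertion, note that $M$ is atomless, since $\mbI$ is; thus $M$ is the separable model of \APA, which is $\aleph_0$-categorical and model complete, as it has quantifier elimination (\cite[\textsection 16]{bbhu08}). Applying Theorem~\ref{thm:metric-gen-is-exist-closed} to the metric generic $\bar g$, we conclude that $(M,\bar g)$ is an existentially closed model of $\APA_k$; and since \APA\ has quantifier elimination, Remark~\ref{rem:T2forall=Tforall2} identifies this with being an existentially closed model of \PMPk, i.e., an existentially closed pmp $\mbF_k$-system. There is no genuine obstacle in this argument, which is a corollary of the cited results; the only step deserving explicit justification is the dictionary between the abstract structure $N=N_0\otimes N_1$, with the action used in Theorem~\ref{thm:Fk-on-N-is-generic}, and the concrete pmp system $\mbF_k\actson\wFk\otimes\mbI$, which is exactly what the density observations in the first paragraph supply.
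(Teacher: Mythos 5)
Your proposal is correct and follows essentially the same route as the paper: the theorem is obtained there precisely by combining Proposition~\ref{p:N-goog-app-M}, Theorem~\ref{thm:Fk-on-N-is-generic} and Theorem~\ref{thm:good-app-substructures}, identifying the extension of the action on $N=N_0\otimes N_1$ to $M$ with the dual of $\mbF_k\actson\wFk\otimes\mbI$, and then invoking Theorem~\ref{thm:metric-gen-is-exist-closed} for existential closedness. Your explicit verification of the dictionary between $N$ and the concrete system, and of the hypotheses of Theorem~\ref{thm:metric-gen-is-exist-closed} for \APA, matches what the paper leaves implicit.
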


The particular properties of this example enable us to draw a conclusion about the model completion \PMPFk*. Indeed, the pmp system $\mbF_k\actson \wFk\otimes\mbI$ is profinite, and in particular \emph{compact} (a.k.a.\ \emph{isometric} or \emph{discrete spectrum}). By contrast, we have the following result of Kerr--Pichot \cite{KerrPichot08}; see also  \cite[Thm.~12.9]{kechrisGlobal}.  Recall that by $(X,\mu)$ we mean any atomless Lebesgue space.

\begin{theorem}
Weakly mixing pmp actions of $\mbF_k$ form a dense $G_\delta$ subset of $\Aut(X,\mu)^k$.
\end{theorem}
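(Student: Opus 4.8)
The plan is to establish the two halves separately: that the weakly mixing pmp $\mbF_k$-actions form a $G_\delta$ set, and that they are dense, in $\Aut(X,\mu)^k$ with the weak (pointwise-convergence) topology. I identify $\bar T=(T_1,\dots,T_k)\in\Aut(X,\mu)^k$ with the pmp $\mbF_k$-action it induces (sending a fixed free basis to $T_1,\dots,T_k$), and I fix once and for all a countable subalgebra $\mcA_0=\{B_m\}_{m\in\mbN}$ of the measure algebra that is dense for the metric $\mu(a\triangle b)$. For the $G_\delta$ part I would use the combinatorial form of weak mixing: $\bar T$ is weakly mixing iff for every $\epsilon>0$ and every finite family $A_1,\dots,A_n$ in the measure algebra there is $w\in\mbF_k$ with $|\mu(\bar T(w)A_i\cap A_j)-\mu(A_i)\mu(A_j)|<\epsilon$ for all $i,j$. (This is standard: with $f_i=\mathbf 1_{A_i}-\mu(A_i)\in L^2_0(X)$, the quantity above is exactly the matrix coefficient $\langle U_{\bar T(w)}f_i,f_j\rangle$, and the condition says that the Koopman representation on $L^2_0(X)$ has no nonzero finite-dimensional subrepresentation, which is the representation-theoretic description of weak mixing, equivalently of ergodicity of $\bar T\times\bar T$.) Now set, for each finite $F\subseteq\mbN$ and each rational $\epsilon>0$,
\[
U_{F,\epsilon}=\bigcup_{w\in\mbF_k}\bigl\{\bar T:\ |\mu(\bar T(w)B_m\cap B_{m'})-\mu(B_m)\mu(B_{m'})|<\epsilon\ \text{ for all }m,m'\in F\bigr\}.
\]
Each inner set is open, because for a fixed word $w$ the map $\bar T\mapsto\bar T(w)$ is continuous (multiplication and inversion are continuous in the Polish group $\Aut(X,\mu)$) and so is $S\mapsto\mu(SB_m\cap B_{m'})$; hence each $U_{F,\epsilon}$ is open and $\bigcap_{F,\epsilon}U_{F,\epsilon}$ is $G_\delta$.

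Next I would check that $\bigcap_{F,\epsilon}U_{F,\epsilon}$ is exactly the set of weakly mixing actions. One inclusion is the combinatorial characterization applied to the $B_m$'s. For the converse, given arbitrary $A_1,\dots,A_n$ and $\epsilon>0$, pick $B_{m_i}\in\mcA_0$ with $\mu(A_i\triangle B_{m_i})<\eta$; since each $\bar T(w)$ is measure-preserving, $\mu(\bar T(w)A_i\cap A_j)$ and $\mu(\bar T(w)B_{m_i}\cap B_{m_j})$ differ by at most $2\eta$, and the products $\mu(A_i)\mu(A_j)$, $\mu(B_{m_i})\mu(B_{m_j})$ also differ by at most $2\eta$, so for $\eta$ small enough membership in $U_{F,\epsilon/2}$, with $F\ni m_1,\dots,m_n$, supplies the desired $w$. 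Note that this transfer from the dense subalgebra to the full measure algebra is clean precisely because the weak-mixing condition is two-sided (no quantitative lower bound on $\mu(\bar T(w)B_{m_i}\cap B_{m_j})$ is needed), in contrast with a bare ergodicity condition.

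For density I would exploit that $\mbF_k$ is free, together with the elementary fact that if a single element of a group acts as a weakly mixing transformation then the whole action is weakly mixing: a finite-dimensional invariant subspace of $L^2_0(X)$ for the entire action is in particular invariant under that one element, hence zero. So let $\bar T=(T_1,\dots,T_k)$ and let a basic weak-topology neighbourhood of it be given; I may assume it has the form $\{\bar S:\mu(S_iB_m\triangle T_iB_m)<\epsilon,\ i\le k,\ m\in F\}$ for some finite $F\subseteq\mbN$ and $\epsilon>0$. Since the weakly mixing transformations are dense in $\Aut(X,\mu)$ — the classical theorem of Halmos, via Rokhlin's lemma — I can choose a weakly mixing $S_1\in\Aut(X,\mu)$ with $\mu(S_1B_m\triangle T_1B_m)<\epsilon$ for every $m\in F$. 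Then $(S_1,T_2,\dots,T_k)$ lies in the given neighbourhood and is weakly mixing by the observation above. Hence the weakly mixing actions are dense, and being also $G_\delta$ they are comeager, in fact dense $G_\delta$, in $\Aut(X,\mu)^k$.

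I expect the only genuinely non-routine point to be invoking the representation-theoretic characterization of weak mixing for a general countable group rather than reproving it — concretely, the implication that the absence of a finite-dimensional subrepresentation in $L^2_0(X)$ already forces all matrix coefficients to be simultaneously small on finite sets, which is what identifies $\bigcap_{F,\epsilon}U_{F,\epsilon}$ with the weakly mixing locus. Everything else is bookkeeping with the weak topology, except for the base fact that weakly mixing transformations are dense in $\Aut(X,\mu)$, which is a classical application of Rokhlin's lemma.
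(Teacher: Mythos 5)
Your proof is correct, but it does not follow the paper's route for the simple reason that the paper gives no proof of this statement at all: it is quoted from Glasner--Weiss, as reformulated by Kechris (Theorem 12.2 of his book on global aspects of ergodicity), where it is proved for every countable group without property (T). Your $G_\delta$ argument --- writing the weakly mixing locus as a countable intersection of open sets via matrix coefficients over a countable dense subalgebra, then transferring from the subalgebra to arbitrary sets --- is the standard one and is sound, granted the one external ingredient you explicitly flag, namely that absence of finite-dimensional subrepresentations of the Koopman representation on $L^2_0$ is equivalent to simultaneous smallness of matrix coefficients on finite sets of vectors (a classical fact about unitary representations, so invoking it is legitimate). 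Where you genuinely diverge from the cited source is the density half: Glasner--Weiss must treat arbitrary non-(T) groups and their argument is correspondingly more involved, whereas you exploit freeness of $\mbF_k$ twice --- any $k$-tuple of automorphisms defines an action, and one may perturb a single generator while staying in a basic neighborhood --- combined with Halmos's theorem that weakly mixing transformations are dense in $\Aut(X,\mu)$ and the observation that weak mixing of a single generator already kills every finite-dimensional invariant subspace of $L^2_0$ for the whole action. This buys a short, self-contained proof of exactly the case the paper needs ($\Gamma=\mbF_k$), at the cost of generality: the coordinate-replacement trick is unavailable for groups with relations, which is precisely why the general statement cited in the paper goes through the property-(T) dichotomy instead.
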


It follows from this theorem that the class of metrically generic pmp actions of the free group contains a weakly mixing system. Now, weakly mixing and compact systems are \emph{disjoint}, and in particular share no common factors (see \cite[Thm.~6.27]{glaJoinings}). Thus, we have two models of \PMPFk* that cannot simultaneously realize any non-trivial complete type.

\begin{cor}
The theory \PMPFk* has no isolated types over the empty set, other than the trivial ones.
\end{cor}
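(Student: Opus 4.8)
The plan is to play off against each other two genuinely different existentially closed pmp systems. On one side sits the compact system $\mcY=(\mbF_k\actson\wFk\otimes\mbI)$, which is existentially closed by Theorem~\ref{thm:metric-gen-profinite-times-trivial}; on the other side I want a \emph{weakly mixing} existentially closed system $\mcX$. Granting such an $\mcX$, here is the argument. Recall that in continuous logic an isolated (principal) type over $\emptyset$ is realized in every model of the theory (cf.\ \cite{bbhu08}). So if $p$ is an isolated type of \PMPFk* over $\emptyset$, say in $n$ variables, then it is realized by some $a\in\mcX^n$ and some $b\in\mcY^n$. Since \PMPFk* has quantifier elimination (Theorem~\ref{thm:main-example}), the equality $\tp(a)=\tp(b)=p$ forces the generated substructures $\langle a\rangle$ and $\langle b\rangle$ to be isomorphic as pmp $\mbF_k$-systems via $a\mapsto b$; and an $\mcL_k$-substructure of a pmp system is nothing but an invariant subalgebra, that is, a factor. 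Thus $\langle a\rangle$, which is a factor of $\mcX$, is isomorphic to $\langle b\rangle$, which is a factor of $\mcY$, so $\mcX$ and $\mcY$ have a common factor. As $\mcX$ is weakly mixing and $\mcY$ is compact, such systems are disjoint and share no common factor other than the one-point system \cite[Thm.~6.27]{glaJoinings}; hence $\langle a\rangle$ is trivial. Therefore every entry of $a$ lies in $\{{\bf 0},{\bf 1}\}$, so $p$ is the type of a tuple from the trivial Boolean subalgebra --- a trivial type --- which is exactly the claim.

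It remains to produce the weakly mixing existentially closed system, via a Baire-category intersection inside $\Aut(X,\mu)^k$. I claim the set $E$ of tuples $\bar g\in\Aut(X,\mu)^k$ for which $(M,\bar g)$ is existentially closed --- equivalently, lies in the elementary class $\mcE$ of Definition~\ref{d:class of ec models} --- is comeager. First, $E$ is invariant under the diagonal conjugation action of $\Aut(X,\mu)$, since conjugating the automorphisms by a fixed element is an $\mcL_k$-isomorphism. Second, $E$ is closed for the uniform metric $\partial$: by induction on $\mcL_k$-formulas --- using uniform continuity of the basic symbols together with the estimate $d(\tau_w^{\bar g}c,\tau_w^{\bar h}c)\leq|w|\,\partial(\bar g,\bar h)$, obtained by telescoping along a word $w$ --- the value of every $\mcL_k$-sentence in $(M,\bar g)$ depends $\partial$-continuously on $\bar g$; in particular so does the value of each axiom $A_{O,k}$ from Proposition~\ref{p:E-is-elementary}, and the condition that it equal $0$ is $\partial$-closed. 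Third, $E$ contains the metrically generic tuple $\bar g_0$ realized by $\mbF_k\actson\wFk\otimes\mbI$. Combining these three points, $E\supseteq\ov{\Aut(X,\mu)\cdot\bar g_0}^\partial$, and the latter is comeager by the very definition of metric generic. On the other hand, by the theorem of Glasner--Weiss recalled above, the weakly mixing $\mbF_k$-actions form a comeager subset of $\Aut(X,\mu)^k$. Two comeager subsets of a Polish space intersect, so any $\bar g$ in the intersection yields the desired weakly mixing existentially closed system $\mcX=(M,\bar g)$.

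The main obstacle is precisely the comeagerness of $E$: one must notice that existential closedness is preserved under uniform perturbation of the automorphism tuple, and that at least one metric generic is existentially closed, so that the comeager uniform orbit closure of that generic lands inside $E$. Once this is in place, the rest is routine --- quantifier elimination to identify the generated substructures, the identification of substructures with factors, the classical disjointness of compact and weakly mixing systems, and the fact that isolated types are realized in every model.
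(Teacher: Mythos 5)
Your proposal is correct and takes essentially the same route as the paper: a weakly mixing existentially closed system is produced by intersecting the comeager uniform closure of the conjugacy class of the metric generic $\mbF_k\actson\wFk\otimes\mbI$ with the comeager set of weakly mixing actions given by Glasner--Weiss, and then disjointness of compact and weakly mixing systems rules out any non-trivial type realized in both models. The only (harmless) variation is in how the weakly mixing tuple in that intersection is seen to be existentially closed: you check that the set of e.c.\ tuples is $\partial$-closed and conjugation-invariant, whereas the paper observes that every tuple in the uniform closure of the generic class is itself metrically generic and hence e.c.\ by Theorem~\ref{thm:metric-gen-is-exist-closed}; your spelling-out of the final step (isolated types are realized in every model, quantifier elimination identifies the generated subsystems, substructures are factors) matches what the paper leaves implicit.
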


The rest of the section is devoted to showing that the pmp system $\mbF_k\actson \wFk$ is metrically generic as well. It is tempting to try to deduce it from the metric genericity of the system $\mbF_k\actson \wFk\otimes\mbI$, but we are not sure how to do so; see Corollary~\ref{c:profinite-vs-profinite-times-trivial} and Question~\ref{q:profinite-vs-profinite-times-trivial} at the end of the section. Instead, we will adapt our previous argument step-by-step to make it work for the profinite completion alone.

Let us begin by recalling Kechris's result mentioned earlier.

\begin{theorem}\label{thm:kechris}
The diagonal conjugacy class of the action $\mbF_k\actson \wFk$ is dense in $\Aut(\wFk,\mu)^k$.
\end{theorem}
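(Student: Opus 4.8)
The plan is to prove density of the conjugacy class of the profinite tuple $\bar g\in\Aut(\wFk,\mu)^k$ — the tuple of left translations by a fixed free generating set of $\mbF_k$ — by a finitary reduction followed by a combinatorial realization step, in the spirit of the proof of Theorem~\ref{thm:Fk-on-N-is-generic} but carried out directly inside the measure algebra of $\wFk$. Since the measure algebra of $\wFk$ is the atomless separable probability algebra, a basic neighbourhood of an arbitrary $\bar f\in\Aut(\wFk,\mu)^k$ is given by finitely many elements $a_1,\dots,a_n$ of that algebra and some $\epsilon>0$, and involves only the first-level data $f_i(a_j)$ ($i\le k$, $j\le n$). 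So it is enough, given such $\bar f$, $(a_j)$ and $\epsilon$, to find $h\in\Aut(\wFk,\mu)$ with $d\big((hg_ih\inv)(a_j),f_i(a_j)\big)<\epsilon$ for all $i,j$.

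The first step is to pass to clopen, finitary data. Using that $\Clop(\wFk)$ is dense in the measure algebra and that the $f_i$ are $d$-continuous, choose a finite quotient $\pi\colon\wFk\to\Gamma$ fine enough that the finite algebra of $\pi$-pullbacks approximately contains each $a_j$ and each $f_i(a_j)$, and is approximately preserved by each $f_i$. Using the Hrushovski property of $\MBAQ$ recalled in Section~\ref{s:profinite-completion}, extend the resulting partial measure-preserving maps to an honest action of $\mbF_k$ by automorphisms of a finite refinement $(Y,\bar\sigma)$, all of whose atoms have equal measure $1/m$ with $m=|Y|$, and in which the relevant sets are close to unions of atoms. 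The $\mbF_k$-set $Y$ splits into transitive orbits $Y=\bigsqcup_{l=1}^{r}Y_l$, with $|Y_l|=m_l$ and $\sum_l m_l=m$.

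The core of the argument is to realize $(Y,\bar\sigma)$ \emph{approximately} inside $\mbF_k\actson\wFk$, i.e.\ to produce a partition $Q=(q_y)_{y\in Y}$ of $\wFk$ with $\mu(q_y)=1/m$ and $\mu\big(g_iq_y\,\triangle\,q_{\sigma_i(y)}\big)$ as small as desired. For one transitive piece $Y_l\cong\mbF_k/H_l$ this is immediate: $\overline{H_l}$ is open in $\wFk$, and the pullback of the point-partition of $\wFk/\overline{H_l}$ is an honest $\mbF_k$-equivariant partition $R^{(l)}=(r^{(l)}_y)_{y\in Y_l}$ with cells of measure $1/m_l$. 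The obstruction is that $Y$ itself, being intransitive when $r>1$, cannot be an honest finite factor of the \emph{ergodic} system $\mbF_k\actson\wFk$. The point that unlocks this is that the profinite action is far from strongly ergodic: through the quotient $\wFk\twoheadrightarrow\widehat{\mbZ}$ (on which every free generator acts as the odometer) one obtains, for every $t\in[0,1]$, nested sets of measure $\to t$ that are almost invariant under all $g_i$ (take intervals of the appropriate length in the finite quotients $\mbZ/2^{s}$ and pull back). Since intervals in a huge cyclic quotient are approximately equidistributed over the fibres of any fixed finite quotient, one may arrange these almost-invariant sets to be approximately independent from each $R^{(l)}$, and so obtains a partition $\wFk=\bigsqcup_{l=1}^{r}E_l$ with $\mu(E_l)$ close to $m_l/m$ and $\mu(g_iE_l\triangle E_l)$ arbitrarily small. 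Then for $y\in Y_l$ set $q_y\coloneqq E_l\cap r^{(l)}_y$: this works because $g_i(E_l\cap r^{(l)}_y)=g_iE_l\cap r^{(l)}_{\sigma_i(y)}$ differs from $q_{\sigma_i(y)}=E_l\cap r^{(l)}_{\sigma_i(y)}$ only by $\mu(g_iE_l\triangle E_l)$, and $\sigma_i(y)$ stays inside $Y_l$.

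Finally one conjugates back. Writing each $a_j$'s atom-approximant as $\bigcup_{y\in S_j}(\text{atom }y)$ and setting $d_j\coloneqq\bigcup_{y\in S_j}q_y$, the tuple $\big((d_j)_j,(g_id_j)_{i,j}\big)$ has, up to the accumulated small error, the same joint distribution as $\big((a_j)_j,(f_ia_j)_{i,j}\big)$, since all members of both tuples are unions — over the \emph{same} index sets — of the cells of two partitions of equal cardinality and with cells of equal measure $1/m$. By ultrahomogeneity of the measure algebra, in the quantitative form of Lemma~\ref{l:d_P-equality}, there is $h\in\Aut(\wFk,\mu)$ with $h(d_j)$ close to $a_j$ and $h(g_id_j)$ close to $f_i(a_j)$ for all $i,j$; hence $(hg_ih\inv)(a_j)$ is close to $h(g_id_j)$, hence to $f_i(a_j)$, and the total error is below $\epsilon$ once all the approximation parameters were chosen small enough in terms of $\epsilon,n,k,m$. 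The main difficulty is precisely the intransitive case of the realization step — reconciling the ergodicity of $\mbF_k\actson\wFk$ with the need to fit non-ergodic finite configurations into it — which is exactly where the special structure of the profinite completion (abundance of almost-invariant sets of every measure, and of suitably independent finite quotients) is essential; this is the content of Kechris's theorem \cite{kechrisWeak}, which one may of course simply invoke.
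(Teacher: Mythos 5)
You should know that the paper does not prove this statement at all: it is imported as a black box, the entire ``proof'' being the citation \cite{kechrisWeak} (\S 1--3, in particular Theorem~3.1). Since you explicitly reserve the option of simply invoking Kechris, your proposal is certainly adequate for the role the theorem plays in the paper; what you add beyond the paper is a sketch of an actual argument, which is a genuinely different route. That sketch looks sound in outline and is parallel in spirit to the back-and-forth proof of Theorem~\ref{thm:Fk-on-N-is-generic}: the transitive pieces of a finite $\mbF_k$-set are honest clopen factors of $\mbF_k\actson\wFk$ (coset partitions of the closure of a finite-index subgroup), and the genuinely delicate intransitive case is resolved by pulling back a partition of a huge cyclic quotient $\mbZ/N$ of $\widehat{\mbZ}$ (all free generators mapping to $+1$) into long intervals: these sets are simultaneously almost invariant under all $k$ generators, and for $N$ large (or chosen coprime to the orders of the relevant finite quotients, where independence becomes exact by Goursat) they are approximately independent of the coset partitions, so intersecting yields an almost equivariant, almost equidistributed copy of the given finite action inside the ergodic profinite one.

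Two steps are glossed and would need care in a complete write-up, though neither is a conceptual gap. First, the partial maps you feed into the Hrushovski property of $\MBAQ$ must be genuine measure-preserving isomorphisms between finite subalgebras, whereas the clopen approximants of the $a_j$ and $f_i(a_j)$ only have approximately equal joint distributions; a small perturbation/rounding argument is needed before you can extend to an honest finite $\mbF_k$-action with equal-measure atoms. Second, Lemma~\ref{l:d_P-equality} as stated requires tuples of exactly the same type, so at the end you should either first correct the cells $q_y$ to have measure exactly $1/m$ (cheap, by atomlessness) or invoke the standard approximate-homogeneity fact for atomless probability algebras (tuples with close joint distributions are carried close to one another by an automorphism). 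In short: the paper buys brevity and certainty by citing \cite{kechrisWeak}; your route buys a self-contained argument in the style of Section~\ref{s:profinite-completion}, at the cost of the quantitative bookkeeping just described.
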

\begin{proof}
See \cite[\textsection 1-3]{kechrisWeak}, and particularly Theorem~3.1.
\end{proof}

The discrete profinite action $\mbF_k\actson N_0=\Clop(\wFk)$, on the other hand, has no non-trivial fixed points, so its diagonal conjugacy class is not dense in $\Aut(N_0)^k$. We may, however, consider the set
$$E_k=\bigcap_{a\in N_0\setminus\{{\bf 0},{\bf 1}\}}\bigcup_{1\leq i\leq k}\left\{\bar{h}\in \Aut(N_0)^k:h_i(a)\neq a\right\},$$
consisting precisely of those $\bar{h}$ whose induced action on $N_0$ has no non-trivial fixed points. The set $E_k$ is closed in $\Aut(N_0)^k$, so in particular a Polish subspace. Our strategy will consist in showing that the diagonal conjugacy class of the action $\mbF_k\actson N_0$ is comeager in $E_k$, and then use the following variant of Theorem~\ref{thm:good-app-substructures}.

Given a classical structure $N$, a tuple $\bar{p}\in\Aut(N)^k$ and a Polish subspace $E\subseteq\Aut(N)^k$, let us say that $\bar{p}$ (or the associated action on $N$) is \emph{generic in $E$} if the diagonal conjugacy class $\Aut(N)\cdot\bar{p}$ is a comeager subset of $E$.

\begin{theorem}\label{thm:good-app-substructures-variant}
Suppose $N$ is a good countable approximating substructure of a separable metric structure $M$. Let $k\in\mbN$ and let $E\subseteq \Aut(N)^k$ be a Polish subspace with the following properties:
\begin{itemize}[leftmargin=20pt]
\item $E$ is dense as a subset of $\Aut(M)^k$;
\item for every $\epsilon>0$ and every relatively open subset $U\subseteq E$, the $\epsilon$-fattening $(U)_\epsilon$ is open in $\Aut(M)^k$.
\end{itemize}
Let $\bar{p}\in\Aut(N)^k$. If $\bar{p}$ is generic in $E$, then its action on $M$ is metrically generic.
\end{theorem}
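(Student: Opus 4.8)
The plan is to mimic the proof of Theorem~\ref{thm:good-app-substructures} (Theorem~5.6 of \cite{BBM-topometric}), with the Polish group $\Aut(N)^k$ replaced throughout by the Polish subspace $E$. Write $H=\Aut(N)$ and $G=\Aut(M)$, and regard $H$ as a subgroup of $G$ via the unique extensions of automorphisms, as in the definition of a countable approximating substructure. Our task is to show that $\overline{(G\cdot\bar p)}^\partial$ is comeager in $G^k$, and since $H\cdot\bar p\subseteq G\cdot\bar p$ (note $\bar p\in\Aut(N)^k=H^k$ and $H\le G$), hence $\overline{(H\cdot\bar p)}^\partial\subseteq\overline{(G\cdot\bar p)}^\partial$, it suffices to exhibit a comeager subset of $G^k$ contained in $\overline{(H\cdot\bar p)}^\partial$.

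By hypothesis $\bar p$ is generic in $E$, so the diagonal conjugacy class $H\cdot\bar p$ is comeager in $E$; fix relatively open dense subsets $V_n\subseteq E$, which we may take decreasing, with $\bigcap_n V_n\subseteq H\cdot\bar p$. Now I would use the two standing hypotheses on $E$. By the second one, for each $n$ and each rational $\epsilon>0$ the uniform fattening $(V_n)_\epsilon$ is open in $G^k$; and it is dense in $G^k$, because $V_n$ is dense in $E$, which in turn is dense in $G^k$, so that $(V_n)_\epsilon\supseteq V_n$ meets every nonempty open subset of $G^k$. Picking a fast-decreasing sequence $\epsilon_n\to 0$, the set $D=\bigcap_n (V_n)_{\epsilon_n}$ is therefore a dense $G_\delta$, hence comeager in $G^k$. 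It remains to prove $D\subseteq\overline{(H\cdot\bar p)}^\partial$, and this is carried out exactly as in \cite{BBM-topometric}, \emph{mutatis mutandis}, by a fusion (back-and-forth) argument: given $\bar q\in D$, one builds a single uniformly Cauchy sequence $\bar u^{(n)}$, with $\bar u^{(n)}$ lying in $V_n$ and with $\partial(\bar q,\bar u^{(n)})$ kept arbitrarily small, alternating (i) the density of $V_{n+1}$ inside $E$ (used to stay within the decreasing sets $V_m$ while shrinking a Polish neighborhood), and (ii) the approximating-substructure machinery established above — Proposition~\ref{p:H-unif-dense-in-G} and Corollary~\ref{c:eps-fatts-are-open}, which allow small uniform perturbations to be realized by conjugating with elements of $G$ near the identity — to keep consecutive approximants uniformly close. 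By completeness of $\partial$ the sequence converges to some $\bar u^{(\infty)}$, which lies in $\bigcap_n V_n\subseteq H\cdot\bar p$ by the choices made at all high enough stages, while $\partial(\bar q,\bar u^{(\infty)})$ can be made as small as we wish (using lower semicontinuity of $\partial(\bar q,\cdot)$). Thus $\bar q\in\overline{(H\cdot\bar p)}^\partial$, completing the argument.

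The step where care is genuinely needed — and the main obstacle — is this last inclusion. One cannot simply invoke ``the uniform fattening of a dense $G_\delta$ is comeager'', because fattening and countable intersection commute only in the unhelpful direction: $(\bigcap_n V_n)_\epsilon\subseteq\bigcap_n (V_n)_\epsilon$, with no reverse containment in general. The coherent fusion construction is precisely what circumvents this, by choosing all the approximants compatibly rather than one level at a time; and the two hypotheses placed on $E$ (Polish-density in $G^k$, and openness in $G^k$ of the uniform fattenings of relatively open subsets) are exactly what is needed to make that construction run, playing the role that the \emph{goodness} of the approximating substructure plays in Theorem~\ref{thm:good-app-substructures}. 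A minor bookkeeping point is that $E$ is only a Polish \emph{subspace} of $\Aut(N)^k$, so one must keep the ``Polish-side'' moves of the fusion inside $E$; this is harmless, since the approximants $\bar u^{(n)}$ are taken in the $V_n\subseteq E$, whereas the ``uniform-side'' adjustments are permitted to leave $E$ (indeed $H^k$), only the limit $\bar u^{(\infty)}$ being constrained to land in $H\cdot\bar p$.
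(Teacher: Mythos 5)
Your reduction (writing $H=\Aut(N)$, $G=\Aut(M)$ as you do) to exhibiting a comeager subset of $G^k$ inside $\overline{(H\cdot\bar p)}^\partial$, and the construction of the dense $G_\delta$ set $D=\bigcap_n(V_n)_{\epsilon_n}$, are fine. But the whole content of the theorem is then the inclusion $D\subseteq\overline{(H\cdot\bar p)}^\partial$, which you delegate to a fusion argument ``exactly as in \cite{BBM-topometric}'', and as described this step does not work. The limit you extract ``by completeness of $\partial$'' need not lie in $\Aut(N)^k$ at all (a uniform limit of extended automorphisms of $N$ generally fails to preserve the dense countable set $N$; in the intended application $H$ is even uniformly dense in $G$), let alone in $E$ or in the sets $V_n$, which are open only in the finer Polish topology of $E$; so the assertion $\bar u^{(\infty)}\in\bigcap_n V_n\subseteq H\cdot\bar p$ is unjustified, and your closing remark that ``only the limit is constrained to land in $H\cdot\bar p$'' asks of $\partial$-completeness exactly what it cannot deliver. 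To land in $\bigcap_n V_n$ one must make the approximants converge in the Polish topology of $E$, and then the recursive step needs a point of $V_{n+1}$ lying in a prescribed small $E$-neighborhood of $\bar u^{(n)}$ \emph{and} $\partial$-close to $\bar q$: density of $V_{n+1}$ in $E$ gives no $\partial$-control, while $\bar q\in(V_{n+1})_{\epsilon_{n+1}}$ only yields a witness unrelated to that neighborhood. This is precisely the incoherence you flag, and no mechanism resolving it is actually supplied; indeed, for arbitrary dense open $V_n\subseteq E$ and an $\epsilon_n\to0$ chosen with no quantitative link to the $V_n$, the inclusion $\bigcap_n(V_n)_{\epsilon_n}\subseteq\overline{(H\cdot\bar p)}^\partial$ should not be expected. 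The arguments of \cite{BBM-topometric} do not transfer comeagerness abstractly; they exploit the orbit structure, translating by group elements near the identity and using bi-invariance and lower semicontinuity of $\partial$, in the spirit of Lemma \ref{l:baire}.

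A further problem is that the tools you invoke for the ``uniform-side'' moves, Proposition \ref{p:H-unif-dense-in-G} and Corollary \ref{c:eps-fatts-are-open}, are statements about the specific pair (measure algebra, rational subalgebra) established for the application; they are not hypotheses of the general theorem you are proving, which assumes only that $N$ is an approximating substructure (not even a good one) together with the two displayed conditions on $E$ --- and in any case neither result says what you use, namely that small uniform perturbations can be realized by conjugations near the identity. For the record, the paper's own proof is simply a citation of Theorem 5.2 of \cite{BBM-topometric} and the remarks following it, i.e., of the abstract topometric criterion for metric genericity; a correct self-contained argument should run through that criterion: show that $H\cdot\bar p$ is dense in $E$, hence in $G^k$, and that for every neighborhood $U$ of the identity and every $\eta>0$ the fattening $(U\cdot\bar p)_\eta$ is somewhere dense in $G^k$ --- the latter by combining somewhere-density of $(U\cap H)\cdot\bar p$ in $E$ (from genericity of $\bar p$ in $E$) with the hypothesis that $\epsilon$-fattenings of relatively open subsets of $E$ are open, using bi-invariance of $\partial$ and continuity of the inclusion of $E$ into $G^k$. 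As it stands, your proposal has a genuine gap at its central step.
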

\begin{proof}
Follows immediately from \cite[Thm.~5.2]{BBM-topometric} and the subsequent remarks.
\end{proof}

Our goal in the rest of this section is to apply the preceding theorem to the case where $M$ is the measure algebra of $(\wFk,\mu)$, $N$ is $N_0$, $E$ is the set $E_k$ defined above, and $\bar p$ corresponds to the generators of the action $\mbF_k\actson N_0$ .  Theorem~\ref{thm:kechris} yields that $E_k$ is dense when seen as a subset of $\Aut(\wFk,\mu)^k$. Next we check that  the second condition of the preceding theorem is also satisfied (see Corollary \ref{c:eps-fatts-are-open-variant}).  After doing that, we show that $\bar p$ is generic in $E_k$ (see Theorem \ref{thm:Fk-on-N0-is-generic-in-Ek}).

In what follows we take $H=\Aut(N_0)$ and $G=\Aut(\wFk,\mu)$, with the usual identification of $H$ as a subset of $G$. As before, given a subset or tuple $A\subseteq N_0$, by $H_A$ we denote the pointwise stabilizer of $A$ in $H$, and $H_A^k$ denotes $(H_A)^k$. Similarly for $G_A$ and $G_A^k$.

\begin{lem}\label{l:small-moving}
Given $\delta>0$ and finite subalgebras $C\subseteq C'\subseteq N_0$, there exists $s\in H_C$ such that $0<\mu(s(c)\triangle c)<\delta$ for every $c\in C'\setminus C$.
\end{lem}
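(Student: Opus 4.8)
The plan is to build $s$ by hand on a suitable finite subalgebra refining $C'$ and then extend it to all of $N_0$ by ultrahomogeneity. If $C=C'$ the conclusion is vacuous, so I assume $C\subsetneq C'$. First I would fix notation: let $a_1,\dots,a_p$ be the atoms of $C$ and, for each $j$, let $c^j_1,\dots,c^j_{m_j}$ be the atoms of $C'$ lying below $a_j$, so that $m_j\geq 2$ precisely when $C'$ properly splits $a_j$. For each $j$ with $m_j\geq 2$, using that $(N_0,\mu)$ is a good measure (and that every element of $N_0$ has rational measure), I would choose a positive rational $r_j$ with $r_j<\min_i\mu(c^j_i)$ and $r_j<\delta/(2pm_j)$, and then pick clopen sets $d^j_i\subseteq c^j_i$ with $\mu(d^j_i)=r_j$ for $i=1,\dots,m_j$. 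Let $\mathcal D\subseteq N_0$ be the finite subalgebra generated by $C'$ together with all the $d^j_i$; its atoms are the sets $d^j_i$ and $c^j_i\setminus d^j_i$ (for those $j$ with $m_j\geq 2$) together with those $a_j$ for which $m_j=1$.

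Next I would define a measure-preserving automorphism $\theta$ of the finite algebra $\mathcal D$ by prescribing its action on atoms: fix for each $j$ with $m_j\geq 2$ an $m_j$-cycle $\sigma_j$ on $\{1,\dots,m_j\}$, let $\theta$ send $d^j_i$ to $d^j_{\sigma_j(i)}$, and let $\theta$ fix every other atom of $\mathcal D$. Since the $d^j_i$ in a given block all have measure $r_j$, the map $\theta$ preserves measure, so it is an automorphism of $\mathcal D$ within $\MBAQ$; as $N_0$ is the Fra\"{i}ss\'{e} limit of $\MBAQ$, hence ultrahomogeneous, $\theta$ extends to some $s\in\Aut(N_0)=H$. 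The three required properties then follow by elementary computations with the atoms of $\mathcal D$, on which $s$ and $\theta$ agree. For $s\in H_C$: each $a_j$ is mapped to itself by $\theta$, because the cycle $\sigma_j$ merely permutes the $d^j_i$ among themselves, so $s$ fixes every atom of $C$ setwise and therefore fixes $C$ pointwise. For $\mu(s(c)\triangle c)<\delta$ when $c\in C'$: writing $I_j=\{i:c^j_i\subseteq c\}$, the set $s(c)\triangle c$ is contained in $\bigcup_{j,i}d^j_i$ and its meet with $a_j$ equals $\bigl(\bigcup_{i\in\sigma_j(I_j)}d^j_i\bigr)\triangle\bigl(\bigcup_{i\in I_j}d^j_i\bigr)$, whence $\mu(s(c)\triangle c)\leq\sum_j 2m_jr_j<\delta$ by the choice of the $r_j$. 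For $\mu(s(c)\triangle c)>0$ when in addition $c\notin C$: there is some $j$ with $\emptyset\neq I_j\subsetneq\{1,\dots,m_j\}$ (in particular $m_j\geq 2$), and since the $m_j$-cycle $\sigma_j$ fixes no such subset and the $d^j_i$ are pairwise disjoint of positive measure, $\bigcup_{i\in\sigma_j(I_j)}d^j_i\neq\bigcup_{i\in I_j}d^j_i$, so $\mu(s(c)\triangle c)\geq\mu\bigl((s(c)\triangle c)\cap a_j\bigr)\geq r_j>0$.

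I do not expect a real obstacle here: the only places where the particular nature of $N_0$ enters are the choice of the auxiliary pieces $d^j_i$ of prescribed small rational measure (the good-measure property) and the final extension of $\theta$ (ultrahomogeneity of the Fra\"{i}ss\'{e} limit of $\MBAQ$), both immediate. The single point one must keep track of is that within each block the pieces $d^j_i$ should be given a common rational measure, so that the cyclic permutation $\theta$ is measure-preserving, and that these common measures must be chosen small enough that the total displacement accumulated over the finitely many blocks stays below $\delta$.
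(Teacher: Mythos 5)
Your proof is correct. It differs in mechanics from the paper's argument, which is a two-line sketch: there one identifies $N_0$ with the algebra of rational subintervals of $\mbI$ (possible since both are the Fra\"iss\'e limit of $\MBAQ$) and takes for $s$ a sufficiently small non-trivial rational translation modulo $1$, performed separately inside each atom of $C$; any $c\in C'\setminus C$ meets some atom of $C$ in a proper non-trivial piece, which the small rotation moves by a positive but small amount. You instead stay intrinsic to $N_0$: using the good-measure property you carve out, below the atoms of $C'$ inside each atom of $C$, pieces of a common small rational measure, cyclically permute them by a finite measure-preserving automorphism of the generated finite algebra, and extend by ultrahomogeneity. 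The paper's route buys brevity at the cost of invoking the concrete interval presentation; yours is presentation-free, fully detailed, and makes the two delicate points explicit, namely that the auxiliary pieces within a block must share a common rational measure so that the cycle is measure-preserving, and that an $m_j$-cycle has no proper non-empty invariant subset of indices, which is exactly what yields the lower bound $\mu(s(c)\triangle c)\geq r_j>0$ for $c\notin C$. Both arguments are elementary and equally valid; yours would also transfer unchanged to other settings where one only has the good-measure property and ultrahomogeneity rather than a concrete interval model.
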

\begin{proof}
If $C$ is the trivial subalgebra then this is easy; for instance, we may see $N_0$ as the algebra of rational subintervals of $\mbI$ and choose $s$ to be a sufficiently small, non-trivial rational translation modulo~1. If $C$ is an arbitrary finite subalgebra, it is enough to do a similar construction inside each atom of $C$.
\end{proof}

\begin{prop}\label{p:Ek-unif-dense-in-G}
The set $E_k$ is uniformly dense in $G^k$. More generally,
$$\ov{\bar{g}H^k_a\cap E_k}^\partial=\bar{g}G_a^k$$
for every $\bar{g}\in E_k$ and every finite tuple $a\in N_0^n$.
\end{prop}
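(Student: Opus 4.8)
The plan is to upgrade the back-and-forth argument behind Proposition~\ref{p:H-unif-dense-in-G} so that, for a given $\bar h\in\bar gG_a^k$ and $\epsilon>0$, the automorphisms it produces lie not merely in $\bar gH_a^k$ but in $\bar gH_a^k\cap E_k$. The inclusion $\overline{\bar gH_a^k\cap E_k}^\partial\subseteq\bar gG_a^k$ is immediate: $H_a\subseteq G_a$, each $G_a$ is $\partial$-closed (a uniform limit of automorphisms fixing $a$ fixes $a$), hence so is $G_a^k$, and left translation by $\bar g$ is a $\partial$-isometry.

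For the reverse inclusion, fix $\bar h\in\bar gG_a^k$ — equivalently $\bar h\in G^k$ with $h_ia=g_ia$ for $1\le i\le k$ — and $\epsilon>0$. I would build $h_i'\in H=\Aut(N_0)$ as the union of an increasing chain of finite partial isomorphisms $p_i^j$ by a back-and-forth meeting four kinds of requirement. (i) $p_i^0$ is the partial isomorphism on $\langle a\rangle$ sending $a$ to $g_ia$, and it is only ever extended, never revised; thus $h_i'$ agrees with $g_i$ on $\langle a\rangle$, so $g_i^{-1}h_i'\in H_a$ and $\bar h'\in\bar gH_a^k$. (ii) Throughout one maintains the invariant $d_P\big(h_i(b),p_i^j(b)\big)<\epsilon$, where $b$ enumerates the domain of $p_i^j$ and $h_i$ is here seen as an automorphism of $M$; by monotonicity of $d_P$ for concatenation and density of $N_0$ in $M$, this gives $\partial(h_i,h_i')\le\epsilon$. (iii) The usual enumeration stages for domains and ranges make each $h_i'$ a bona fide automorphism of $N_0$. (iv) For every nontrivial $c\in N_0$ there is a stage forcing $h_i'(c)\ne c$ for some $i$. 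The extension steps for (ii) and (iii) are carried out exactly as in Proposition~\ref{p:H-unif-dense-in-G}, via the second part of Lemma~\ref{l:d_P-equality-for-N} applied with $h_i$ and with $h_i^{-1}$ in the role of $g$ (using $G$-invariance of $d_P$ to pass between the two); the strict inequality in the invariant always leaves positive room for one more extension.

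The crux is requirement (iv). If $c\in\langle a\rangle$ is nontrivial, no dedicated stage is needed: since $\bar g\in E_k$ there is $i$ with $g_ic\ne c$, and then $h_i'(c)=g_i(c)\ne c$ automatically, as $h_i'$ agrees with $g_i$ on $\langle a\rangle$. If $c\notin\langle a\rangle$ is nontrivial, I would schedule a stage at which $c$ does not yet lie in the domain of any $p_i^j$, fix one coordinate $i$, and extend $p_i^j$ by adjoining $c$ to its domain with image $c'$ that is (a) compatible with $p_i^j$, (b) within the $\epsilon$-budget of the invariant in (ii), and (c) distinct from $c$. Such a $c'$ exists: because $c\notin\langle a\rangle$ lies outside the (finite) domain of $p_i^j$, it splits some atom $\beta$ of that domain with $0<\mu(c\cap\beta)<\mu(\beta)$, so there is a continuum of subsets of $p_i^j(\beta)$ of measure $\mu(c\cap\beta)$; since $N_0$ is the Fra\"{\i}ss\'e limit of $\MBAQ$ and carries a good measure, the admissible $c'$ satisfying (a) and (b) still form a set with more than one element, so one of them differs from $c$ inside $p_i^j(\beta)$, hence differs from $c$. (Lemma~\ref{l:small-moving} captures exactly this ``small but genuine move'' and can be used to organize these stages.) Taking $\bar h'=\big(\bigcup_jp_i^j\big)_i$ then gives $\bar h'\in\bar gH_a^k\cap E_k$ with $\partial(\bar h,\bar h')\le\epsilon$, so $\bar gG_a^k\subseteq\overline{\bar gH_a^k\cap E_k}^\partial$; with the first inclusion this is the claimed equality. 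The assertion that $E_k$ is $\partial$-dense in $G^k$ is then the case $a=\emptyset$, since $\bar gG^k=G^k$ and $\bar gH^k\cap E_k\subseteq E_k$.

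I expect the main obstacle to be the bookkeeping for requirement (iv): one must interleave the ``moving'' stages with the domain/range stages so that every element that must be moved is processed before it is frozen into some domain — the domain stages, which drag new atoms into the $p_i^j$, must not race ahead of the moving stages — while no later extension overwrites an already-committed value, the latter being automatic here since the $p_i^j$ are only extended. A secondary point is to confirm that all countably many extensions fit within the single budget $\epsilon$: this works because, as in Proposition~\ref{p:H-unif-dense-in-G}, the invariant in (ii) is an open condition preserved by Lemma~\ref{l:d_P-equality-for-N}, and the choices in (iv) can be made within that same open condition rather than spending extra room.
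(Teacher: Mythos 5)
There is a genuine gap, and it sits exactly where you flag the ``main obstacle'': your mechanism for requirement (iv) only controls elements that you explicitly move \emph{before} they enter a domain, but it does not control the elements that get frozen as by-products of the other extension steps. Each extension (your moving stages included, and the range stages needed for surjectivity) enlarges the domain of $p_i^j$ to a whole finite subalgebra at once, and Lemma~\ref{l:d_P-equality-for-N} supplies images for all of these new elements with no control whatsoever on whether some of them are fixed; since your partial maps are only ever extended, a by-product that comes in fixed can never be repaired. No interleaving can prevent this: you cannot ``pre-move'' all by-products of a stage, because moving them is itself an extension that creates further by-products, and for $k=1$ (or once an element has entered every coordinate's domain) there is no spare coordinate left to fall back on. Note also that the target $h_i\in G_a$ may genuinely have large fixed regions, so the uncontrolled extensions really will tend to produce fixed elements; this is not a hypothetical failure.

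The repair is to make fixed-point-freeness off $\langle a\rangle$ an \emph{invariant of the back-and-forth family itself}, not a per-element scheduling task. This is what the paper does: it first invokes Proposition~\ref{p:H-unif-dense-in-G} to reduce to $\bar h\in\bar gH_a^k$, then perturbs a \emph{single} coordinate $h_1$, working with the family of finite partial automorphisms $f\colon B\to C$ satisfying $A\subseteq B$, $f|_A=h_1|_A$, $d_P(f(\bar B),h_1(\bar B))<\epsilon$, \emph{and} $f(b)\neq b$ for every $b\in B\setminus A$. At each extension step one first extends via Lemma~\ref{l:d_P-equality-for-N} and then post-composes with $s\in H_C$ from Lemma~\ref{l:small-moving}, which moves \emph{every} element of $C'\setminus C$ by a positive amount less than a $\delta$ chosen below the remaining slack $\bigl(\epsilon-d_P(\tilde f(\bar B'),h_1(\bar B'))\bigr)/m2^{m-1}$, so that the $d_P$-budget survives and no newly frozen element outside $A$ is fixed. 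Elements of $\langle A\rangle$ are handled, as you correctly observe, by $\bar g\in E_k$, which is also why perturbing one coordinate suffices; your plan of spreading (iv) over all $k$ coordinates is not wrong in principle, but it is precisely what makes the scheduling unmanageable, whereas the invariant-based single-coordinate perturbation closes the argument.
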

\begin{proof}
Let $\bar{g}\in E_k$ and $a\in N_0^n$. It follows from Proposition~\ref{p:H-unif-dense-in-G} that $\ov{\bar{g}H_a^k}^\partial = \bar{g}G_a^k$, so it suffices to show that $\bar{g}H_a^k\subseteq \ov{\bar{g}H^k_a\cap E_k}^\partial$. Now, given $\bar{h}\in \bar{g}H_a^k$ and $\epsilon>0$, we may modify $\bar{h}$ slightly (and in fact, just one of the automorphisms in the tuple) to obtain $\bar{h}'\in \bar{g}H_a^k\cap E_k$ with $\partial(\bar{h}',\bar{h})\leq\epsilon$.

To see this, let $A\subseteq N_0$ be the subalgebra generated by $a$, and let $F$ be the family of partial automorphisms $f\colon B\to C$ defined on finite subalgebras of $N_0$ and satisfying:
\begin{itemize}[leftmargin=20pt]
\item $A\subseteq B$;
\item $f|_A=h_1|_A$;
\item $d_P(f(\bar{B}),h_1(\bar{B}))<\epsilon$, where $\bar{B}$ is some enumeration of $B$;
\item $f(b)\neq b$ for every $b\in B\setminus A$.
\end{itemize}
We argue that $F$ has the back-and-forth property. Suppose $B'\subseteq N_0$ is a finite subalgebra with $B\subseteq B'$. By Lemma~\ref{l:d_P-equality-for-N}, we can extend $f$ to some partial automorphism $\tilde{f}\colon B'\to C'$ preserving the condition $d_P(\tilde{f}(\bar{B}'), h_1(\bar{B}'))<\epsilon$. Now let $m=|B'|=|C'|$, and set
$$\eta_1=\frac{\epsilon-d_P(\tilde{f}(\bar{B}'),h_1(\bar{B}'))}{m2^{m-1}},$$
$$\eta_2=\min\{d(\tilde{f}(b),h_1(b)): b\in B'\setminus B,\tilde{f}(b)\neq h_1(b)\}.$$
Choose then $\delta>0$ with
$$\delta <\min\{\eta_1,\eta_2\},$$
or just $\delta<\eta_1$ if $\eta_2=0$.
By Lemma~\ref{l:small-moving}, there is $s\in H_C$ such that $0<d(s(c),c)<\delta$ for every $c\in C'\setminus C$. If we define $f'\colon B'\to s(C')$ as $f'=s\tilde{f}$, then:
\begin{itemize}[leftmargin=20pt]
\item $f'|_B=\tilde{f}|_B$, because $s\in H_C$;
\item we have:
\begin{align*}
d_P(f'(\bar{B}'),h_1(\bar{B}')) & \leq d_P(f'(\bar{B}'),\bar{C}')+d_P(\bar{C}',h_1(\bar{B}')) \\
& = d_P(s(\bar{C}'),\bar{C}')+d_P(\tilde{f}(\bar{B}'),h_1(\bar{B}')) \\
& < m2^{m-1}\eta_1 + d_P(\tilde{f}(\bar{B}'),h_1(\bar{B}')) = \epsilon,
\end{align*}
because $d_P(s(\bar{C}'),\bar{C}')\leq m2^{m-1}\delta$ (recall the inequality (\ref{eq:d_P-inequalities}) from the beginning of Subsection~\ref{ss:meas-alg}) and $\delta<\eta_1$;
\item $f'(b)\neq b$ for every $b\in B'\setminus A$, because $f'(b)=\tilde{f}(b)\neq b$ for $b\in B\setminus A$, and $0<d(s(c),c)<\eta_2$ for $c\in C'\setminus C$ (or simply $0<d(s(c),c)$, in the case $\eta_2=0$).
\end{itemize}
This proves the forward direction of the back-and-forth, and the other is symmetric. Since $F$ has the back-and-forth property, we conclude that there is $h_1'\in H$ with $h_1'|_A=h_1|_A$ and $\partial(h_1',h_1)\leq \epsilon$, and $h_1(b)\neq b$ for every $b\in N_0\setminus A$. We may finally define $\bar{h}'$ from $\bar{h}$ by replacing the first coordinate by $h_1'$ and leaving the rest unchanged. Note that by construction and since $\bar{h}$ (which coincides with $\bar{g}$ on $A$) does not fix any $a\in A\setminus\{{\bf 0},{\bf 1}\}$, the tuple $\bar{h}'$ has no (non-trivial) fixed point. Hence we have $\bar{h}'\in \bar{g}H_a^k\cap E_k$ and $\partial(\bar{h}',\bar{h})\leq\epsilon$, as desired.
\end{proof}

\begin{rem}\label{rem:Ek-unif-dense-in-G}
For the argument (and statement) of the proposition it is not necessary that $\bar{g}\in E_k$, but only that $\bar{g}$ have no fixed point on the algebra $A$ generated by $a$. As the proof shows, a tuple $\bar{g}\in H^k$ without fixed points on some finite subalgebra $A$ coincides on $A$ with some $\bar{g}'\in E_k$.
\end{rem}

\begin{cor}\label{c:eps-fatts-are-open-variant}
The $\epsilon$-fattening of any relatively open subset of $E_k$ is open in $G^k$.
\end{cor}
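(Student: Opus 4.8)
The plan is to imitate the argument from the non-variant case (the reduction of condition~(3) to~(3') and the use of Corollary~\ref{c:eps-fatts-are-open}), but carried out pointwise, since $E_k$ is not a group and so there is no identity element to translate to. The reduction will be to the following claim, which is the natural analogue of Corollary~\ref{c:eps-fatts-are-open}: \emph{for every $\bar g\in E_k$, every finite tuple $a\in N_0^n$ and every $\delta>0$, one has $\bar g\,(G_{a,\delta}^P)^k\subseteq(\bar g H_a^k\cap E_k)_\delta$}; in particular the $\delta$-fattening of $\bar g H_a^k\cap E_k$ contains the basic $G^k$-open neighborhood $\bar g\,(G_{a,\delta}^P)^k$ of $\bar g$.

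To prove the claim I would take $\bar k=\bar g\bar g'\in\bar g\,(G_{a,\delta}^P)^k$, pick $\eta$ with $\max_i d_P(g'_i a,a)<\eta<\delta$, and argue in two steps. First, Corollary~\ref{c:eps-fatts-are-open} (applied with parameter $\eta$) gives $g'_i\in(G_a)_\eta$ for each $i$, so there are $\hat g_i\in G_a$ with $\partial(g'_i,\hat g_i)<\eta$; setting $\bar{\hat g}=(\hat g_i)_i$, left-invariance of $\partial$ yields $\partial(\bar g\bar g',\bar g\bar{\hat g})<\eta$ and $\bar g\bar{\hat g}\in\bar g G_a^k$. Second, Proposition~\ref{p:Ek-unif-dense-in-G} says precisely that $\ov{\bar g H_a^k\cap E_k}^\partial=\bar g G_a^k$, so there is $\bar h\in\bar g H_a^k\cap E_k$ with $\partial(\bar g\bar{\hat g},\bar h)<\delta-\eta$, whence $\partial(\bar k,\bar h)<\delta$ by the triangle inequality. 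This is the only place where more than bookkeeping is involved: the point is that the approximant of $\bar k$ must be kept inside $E_k$ (i.e.\ fixed-point-free), and Corollary~\ref{c:eps-fatts-are-open} alone does not do that; Proposition~\ref{p:Ek-unif-dense-in-G}, whose proof contains the genuine back-and-forth, is what makes it work. So I do not expect a real obstacle here, given that Proposition~\ref{p:Ek-unif-dense-in-G} is already available.

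Finally I would deduce the corollary. Let $U\subseteq E_k$ be relatively open and $\bar f\in(U)_\epsilon$; fix $\bar h\in U$ with $\epsilon_0\coloneqq\partial(\bar f,\bar h)<\epsilon$ and set $\delta\coloneqq\epsilon-\epsilon_0$. Since $E_k$ carries the subspace topology from $\Aut(N_0)^k$, there is a finite tuple $a\in N_0^n$ with $V\coloneqq\bar h H_a^k\cap E_k\subseteq U$, and the claim applied to $\bar h,a,\delta$ gives $\bar h\,(G_{a,\delta}^P)^k\subseteq(V)_\delta\subseteq(U)_\delta$. Now $O\coloneqq\bar f\,(G_{a,\delta}^P)^k$ is a $G^k$-open neighborhood of $\bar f$, and for $\bar k=\bar f\bar g'\in O$ (so $\bar g'\in(G_{a,\delta}^P)^k$) one has $\bar h\bar g'\in\bar h\,(G_{a,\delta}^P)^k\subseteq(U)_\delta$ while, by right-invariance of $\partial$, $\partial(\bar k,\bar h\bar g')=\partial(\bar f,\bar h)=\epsilon_0$; hence $\partial(\bar k,U)<\epsilon_0+\delta=\epsilon$, i.e.\ $\bar k\in(U)_\epsilon$. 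Thus $O\subseteq(U)_\epsilon$ and $\bar f$ is interior, so $(U)_\epsilon$ is open. The only care required in this last part is keeping the two error budgets consistent.
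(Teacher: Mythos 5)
Your proof is correct and follows essentially the same route as the paper: both rest on reducing to the basic relatively open sets $\bar g H_a^k\cap E_k$ and combining Proposition~\ref{p:Ek-unif-dense-in-G} with Corollary~\ref{c:eps-fatts-are-open}. The paper merely packages this more tersely, noting that $(\bar g H_a^k\cap E_k)_\epsilon=\bar g(G_a^k)_\epsilon$ (fattening a set and its uniform closure agree, and fattening commutes with unions over a base), whereas you carry out the same estimate pointwise with an explicit $\epsilon$-budget; the substance is identical.
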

\begin{proof}
It suffices to check it for the base of open neighborhoods of $E_k$ given by the sets $\bar{g}H^k_a\cap E_k$ for $\bar{g}\in E_k$ and $a\in N_0^n$. By the previous proposition, $(\bar{g}H^k_a\cap E_k)_\epsilon = \bar{g}(G_a^k)_\epsilon$, which is open by Corollary~\ref{c:eps-fatts-are-open}.
\end{proof}

The next lemma should be seen as an ergodic variant of the Hrushovski property of $\MBAQ$.

\begin{lem}\label{l:ergodic-Hru-prop}
For every $\bar{g}\in E_k$ and finite subalgebra $A\subseteq N_0$, there exist $\bar{h}\in E_k$ and a finite subalgebra $B\subseteq N_0$ such that $A\subseteq B$, $\bar{g}|_A=\bar{h}|_A$, and $\bar{h}$ restricts to a tuple of automorphisms of $B$.
\end{lem}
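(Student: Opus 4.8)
The plan is to reduce the lemma to an \emph{ergodic extension property}: given $\bar g\in E_k$ and a finite subalgebra $A\subseteq N_0$, there are a finite subalgebra $B\subseteq N_0$ with $A\subseteq B$ and $g_i(A)\subseteq B$ for all $i$, and a tuple $\bar f\in\Aut(B)^k$ with $f_i|_A=g_i|_A$, such that the action $\mbF_k\actson B$ induced by $\bar f$ is ergodic (has no non-trivial invariant element). Granting this, one extends each $f_i$ to an automorphism $g_i'$ of $N_0$ (possible because $N_0$ is the ultrahomogeneous Fra\"iss\'e limit of $\MBAQ$) and sets $\bar g'=(g_1',\dots,g_k')$. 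Then $\bar g'|_B=\bar f$ is ergodic, so $\bar g'$ has no non-trivial fixed point on the finite subalgebra $B$, and Remark~\ref{rem:Ek-unif-dense-in-G} provides $\bar h\in E_k$ with $\bar h|_B=\bar g'|_B=\bar f$. Since $\bar h|_B=\bar f\in\Aut(B)^k$, the tuple $\bar h$ restricts to a tuple of automorphisms of $B$, and $\bar h|_A=\bar f|_A=\bar g|_A$; together with $A\subseteq B$ this is exactly the conclusion of the lemma.

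For the ergodic extension property I would not invoke the ordinary Hrushovski property of $\MBAQ$ directly, since the finite algebra it produces need not carry an ergodic action, and enlarging that algebra cannot merge the orbits of the action on it. Instead I would build $B$ as a large profinite subalgebra. First refine $A$ inside $N_0$ (using that $(N_0,\mu)$ is a good measure) so that all its atoms have measure $1/q$; it suffices to prove the property for this larger algebra. Put $A_1=\langle A\cup\bigcup_i g_i(A)\rangle$ and choose a finite quotient $\pi\colon\wFk\to\Gamma$ with $A_1\subseteq A_\pi$ and $|\Gamma|=qt$ with $t\ge 2$ (this is possible: $A_1\subseteq A_{\pi_0}$ for some $\pi_0$; any $\pi$ refining $\pi_0$ has $|\Gamma_0|\mid|\Gamma|$, hence $q\mid|\Gamma|$, and $|\Gamma|$ can be taken arbitrarily large). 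Set $B\coloneqq A_\pi$, which is ergodic for the profinite action but is not invariant under $\bar g$. Writing $\bar x\subseteq\Omega$ for the set of atoms of $A_\pi$ lying below an element $x$, the atoms of $A$ partition the atom set $\Omega$ of $A_\pi$ into $q$ blocks $\bar a$ of size $t$, and for each $i$ the elements $g_i(a)$ ($a$ an atom of $A$) partition $\Omega$ into $q$ blocks $\overline{g_i(a)}$ of size $t$; any choice, for each atom $a$ of $A$, of a bijection $\bar a\to\overline{g_i(a)}$ assembles into a permutation $\phi_i$ of $\Omega$, that is, into $f_i\in\Aut(A_\pi)$ with $f_i|_A=g_i|_A$. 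So valid tuples $\bar f$ exist, and the task reduces to choosing these bijections so that $\langle f_1,\dots,f_k\rangle$ acts transitively on $\Omega$, i.e.\ so that $\mbF_k\actson B$ is ergodic.

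I would arrange transitivity by a connectivity argument. For a single generator, the constraint on $\phi_i$ is encoded by the directed multigraph $D_i$ on the set of atoms of $A$ that has one edge, from its block in the $g_i$-partition to its block in the $A$-partition, for each point of $\Omega$; then $D_i$ is balanced with all in- and out-degrees equal to $t$, an Eulerian circuit of $D_i$ is precisely a valid $\phi_i$ which is a single $|\Omega|$-cycle, and $D_i$ is connected exactly when $g_i$ has no non-trivial fixed point in $A$. For the general case one works with the union $D=D_1\cup\dots\cup D_k$: any splitting of its vertex set into two nonempty parts with no crossing edges yields, for every $i$, an element $a_T=\bigvee_{a\in T}a$ of $A$ with $g_i(a_T)=a_T$, contradicting $\bar g\in E_k$; hence $D$ is connected, and one extracts from this a valid family $(\phi_i)_i$ whose orbits cover $\Omega$. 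This last step --- passing from connectedness of $D$ to a transitive choice of the $k$ separate permutations respecting the prescribed block-to-block matchings --- is the part I expect to be the main obstacle. The single-generator case is the clean Eulerian argument above; in general I would proceed by an orbit-merging procedure, repeatedly using an edge of $D$ to rewire one $\phi_i$ at two points and so fuse two orbits of $\langle f_1,\dots,f_k\rangle$, until a single orbit remains.
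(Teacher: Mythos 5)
Your proposal is correct in outline and, at the decisive point, essentially coincides with the paper's own argument; the genuine difference is in the middle. The paper produces the finite algebra $B$ and an initial (not yet ergodic) extension $\bar{f}\in\Aut(B)^k$ of $\bar{g}|_A$ by invoking the Hrushovski property of $\MBAQ$ plus ultrahomogeneity (with all atoms of $B$ of equal measure), whereas you build $B=A_\pi$ explicitly inside $N_0$ after refining $A$ to equal-measure atoms and note that any choice of block-to-block bijections yields valid $f_i$ with $f_i|_A=g_i|_A$; this is a perfectly sound, self-contained substitute for the Hrushovski property in this situation. The step you flag as the main obstacle is exactly where the paper works, and it is not an obstacle: given any valid $\bar{f}$ whose action on the atoms of $B$ has at least two orbits, the paper takes the support $b$ of one orbit, lets $a$ be the join of the atoms of $A$ meeting it, and uses that $\bar{f}|_A=\bar{g}|_A$ has no non-trivial invariant element of $A$ (this is where $\bar{g}\in E_k$ enters; in your language, the same is delivered by connectivity of $D$, since if every pair of blocks $\bar{a},\overline{g_i(a)}$ lay in a single orbit, orbits would be unions of $D$-components) together with a one-line measure comparison to find $i$, an atom $a'$ of $A$ meeting the orbit, and an atom of $B$ below $g_i(a')$ outside the orbit. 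It then composes $f_i$ with the involution of $B$ exchanging that atom with the $f_i$-image of an atom of the orbit lying below $a'$: both exchanged atoms sit inside the single block $g_i(a')$, so $f_i|_A$ is untouched --- this is precisely your ``rewiring $\phi_i$ at two points''. This fuses the two orbits and cannot split either old one, because deleting one edge of a $\phi_i$-cycle still leaves a path joining its endpoints; iterating gives ergodicity on $B$, and your concluding reduction (extend the ergodic tuple to $\Aut(N_0)^k$ by ultrahomogeneity, then invoke Remark~\ref{rem:Ek-unif-dense-in-G}) is the same as the paper's. So nothing in your plan fails; what is missing is only the two routine verifications just indicated --- that a crossing rewiring site exists as long as the action is intransitive, and that one rewiring merges orbits without breaking them --- and your Eulerian-circuit picture for $k=1$ is a pleasant but inessential extra.
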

\begin{proof}
By the Hrushovski property of $\MBAQ$ and ultrahomogeneity, there is $\bar{f}\in H^k$ extending $\bar{g}$ on $A$ that restricts to a tuple of automorphisms of some finite subalgebra $B\supseteq A\cup\bar{g}(A)$. We may moreover assume that all atoms of $B$ have equal measure. In particular, $\bar{f}$ has no non-trivial invariant points on $A$. We will show how to modify $\bar{f}$ on $B$, without changing its values on $A$, to obtain an ergodic action $\mbF_k\actson B$. By Remark~\ref{rem:Ek-unif-dense-in-G}, this is enough to obtain $\bar{h}$ as in the statement.

Let $B=\bigsqcup_{l=1}^m b^l$ be the decomposition of $B$ into minimal $\bar{f}$-invariant elements, and let $b^l=\bigsqcup_{j=1}^{m_l} b^l_j$ be the decomposition of each $b^l$ into atoms of $B$. For $j=1,\dots,m_1$, let $a_j$ be the unique atom of $A$ with $b_j^1\subseteq a_j$. Let also $a=\bigcup_{j=1}^{m_1}a_j$, so in particular $b^1\subseteq a$. If $f_i(a)\subseteq b^1$ for some $1\leq i\leq k$, then $\mu(b^1)\leq\mu(a)=\mu(f_i(a))\leq \mu(b^1)$, so actually $a=b^1=f_i(a)$. Since $A$ has no non-trivial $\bar{f}$-invariant elements, there exists $i$ such that $f_i(a)\nsubseteq b^1$. This means that there are $j_1\leq m_1$, $l>1$, and $j_2\leq m_l$ such that $b^l_{j_2}\subseteq f_i(a_{j_1})$.

Let $s\in H$ be an involution exchanging $f_i(b^1_{j_1})$ and $b^l_{j_2}$ and leaving all other atoms of $B$ fixed. Note that as $f_i(b^1_{j_1})\cup b^l_{j_2}\subseteq f_i(a_{j_1})$, the involution $s$ is the identity on the algebra $f_i(A)$. We then define $f_i'=sf_i$, and we let $\bar{f}'$ be the tuple obtained from $\bar{f}$ by replacing $f_i$ by $f_i'$ and leaving the other coordinates unchanged. In particular, $\bar{f}'|_A=\bar{f}|_A$. On the other hand, $b^1_{j_1}$ and $b^l_{j_2}$ are now in the same $f_i'$-orbit, so $b^1\cup b^l$ is a minimal $\bar{f}'$-invariant element of $B$. We thus see that after $m-1$ iterations of this construction we obtain an ergodic $\mbF_k$-action on $B$ extending the action of $\bar{g}$ on $A$.
\end{proof}

The proof of the following is now exactly as that of Theorem~\ref{thm:Fk-on-N-is-generic}, but easier.

\begin{theorem}\label{thm:Fk-on-N0-is-generic-in-Ek}
The action $\mbF_k\actson N_0$ is generic in $E_k$.
\end{theorem}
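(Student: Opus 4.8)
The plan is to re-run the argument for Theorem~\ref{thm:Fk-on-N-is-generic}, the simplification being that $\mbF_k\actson N_0$ is ergodic (there is no non-trivial invariant subalgebra), so the auxiliary factor $N_1$ used there can be dispensed with: its role of recording the orbit structure is taken over by the ergodic Hrushovski property, Lemma~\ref{l:ergodic-Hru-prop}. Write $H=\Aut(N_0)$, fix a $k$-tuple $\bar w$ of free generators of $\mbF_k$, and let $\bar p\in H^k$ be the corresponding tuple of automorphisms induced by $\mbF_k\actson N_0$. Since being fixed-point-free on $N_0$ is a conjugacy-invariant property, $E_k$ is invariant under the diagonal conjugation action of $H$ on $H^k$; it is closed in $\Aut(N_0)^k$, hence a Polish $H$-space, and $H\cdot\bar p\subseteq E_k$. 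We must show that $H\cdot\bar p$ is comeager in $E_k$.

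\emph{Density.} Given $\bar f\in E_k$ and a finite tuple $a\in N_0^n$, Lemma~\ref{l:ergodic-Hru-prop} (whose proof in fact produces an ergodic action) provides $\bar h\in E_k$ and a finite subalgebra $B$ of $N_0$, containing $a$ and its $\bar h$-translates, on which $\bar h$ restricts to an ergodic $\mbF_k$-action with $\bar h|_{\langle a\rangle}=\bar f|_{\langle a\rangle}$. As $B\in\MBAQ$ and $\mbF_k\actson N_0$ contains an isomorphic copy of every ergodic action of $\mbF_k$ on a member of $\MBAQ$, there is an $\mbF_k$-equivariant embedding $\iota\colon B\to N_0$; extending $\iota\inv$ to an automorphism $h\in H$ via the ultrahomogeneity of $N_0$ yields $(h\cdot\bar p)(a)=\bar f(a)$, exactly as in the density part of the proof of Theorem~\ref{thm:Fk-on-N-is-generic}. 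Hence $H\cdot\bar p$ is dense in $E_k$.

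\emph{Comeagerness.} By the general theory of Polish group actions (see \cite[\textsection 3]{kecros} or \cite[Lem.~5.5]{ibameg}), since $H\cdot\bar p$ is dense it suffices to show that $\bar p$ lies in the interior, relative to $E_k$, of $\overline{U\cdot\bar p}$ for every open neighborhood $U$ of the identity in $H$. As the pointwise stabilizers $H_{A_\pi}$ of the finite subalgebras $A_\pi$ ($\pi\colon\wFk\to\Gamma$ a finite quotient) form a neighborhood basis of the identity of $H$, and the sets $\bar p\,H_{A_\pi}^k\cap E_k$ form a neighborhood basis of $\bar p$ in $E_k$, it is enough to prove that for every finite quotient $\pi$,
\[
\bar p\,H_{A_\pi}^k\cap E_k\ \subseteq\ \overline{H_{A_\pi}\cdot\bar p}
\]
(the closure taken in $H^k$, which lies in $E_k$ since $E_k$ is closed). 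Fix $\bar f\in\bar p\,H_{A_\pi}^k\cap E_k$ and a finite subalgebra $C\supseteq A_\pi$; we must find $h\in H_{A_\pi}$ with $(h\cdot\bar p)|_C=\bar f|_C$. By Lemma~\ref{l:ergodic-Hru-prop} (and its proof), after enlarging $C$ and modifying $\bar f$ outside $C$ --- which changes neither $\bar f|_C$ nor $\bar f|_{A_\pi}=\bar p|_{A_\pi}$ --- we may assume $\bar f$ induces an ergodic action $\mbF_k\actson C$ by automorphisms. Let $\Gamma'\le\Aut(C)$ be the group generated by $\bar f|_C$ and $\pi'\colon\wFk\to\Gamma'$ the continuous homomorphism sending $\bar w$ to $\bar f|_C$; since $\pi=\nu\circ\pi'$ for a homomorphism $\nu\colon\Gamma'\to\Gamma$, we have $A_{\pi'}\supseteq A_\pi$. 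Ergodicity means $\Gamma'$ acts transitively on the atoms of $C$; being measure-preserving, these atoms all have equal measure. Pick a base atom $c_o\subseteq a_{\pi,1_\Gamma}=\pi\inv(1_\Gamma)$, set $S_c=\{\gamma'\in\Gamma':\gamma'c_o=c\}$ for each atom $c$ of $C$, and define $\sigma(c)=a_{\pi',S_c}\in A_{\pi'}\subseteq N_0$. The elements $\sigma(c)$ are pairwise disjoint and $\mu(\sigma(c))=|S_c|/|\Gamma'|=\mu(c)$, so $\sigma$ extends to an isomorphism from $C$ onto $C'\coloneqq\sigma(C)\subseteq N_0$; it is the identity on $A_\pi$ (if $c\subseteq a_{\pi,\gamma}$ then $\nu(\gamma')=\gamma$ for every $\gamma'\in S_c$, whence $a_{\pi',S_c}\subseteq a_{\pi,\gamma}$); and it is $\mbF_k$-equivariant, because $p_i(a_{\pi',S_c})=a_{\pi',\pi'(w_i)S_c}=a_{\pi',S_{f_i(c)}}=\sigma(f_i(c))$ for $i=1,\dots,k$. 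By ultrahomogeneity of $N_0$, $\sigma\inv\colon C'\to C$ extends to some $h\in H$; then $h\in H_{A_\pi}$ and $(h\cdot\bar p)|_C=\bar f|_C$, as required.

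The only genuine difference with the proof of Theorem~\ref{thm:Fk-on-N-is-generic} --- and the point at which care is needed --- is the reduction, via Lemma~\ref{l:ergodic-Hru-prop}, to an \emph{ergodic} action on the finite algebra $C$: once that reduction is made, the equivariant realization $\sigma\colon C\to A_{\pi'}\subseteq N_0$ is immediate and there is no longer any need for the extra factor $N_1$. I expect the main burden to be the bookkeeping of the successive enlargements of $C$ and modifications of $\bar f$, so as to keep everything compatible over $A_\pi$.
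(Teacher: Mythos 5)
Your proposal is correct and follows essentially the same route as the paper: reduce via Lemma~\ref{l:ergodic-Hru-prop} to tuples acting ergodically on a finite subalgebra $C\supseteq A_\pi$, then build the equivariant isomorphism $\sigma(c)=a_{\pi',S_c}$ fixing $A_\pi$ and conclude by ultrahomogeneity, with the same density argument using the copy of every ergodic finite action inside $\mbF_k\actson N_0$.
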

\begin{proof}
Let $\bar{p}\in H^k$ correspond to the generators of the action $\mbF_k\actson N_0$, and note that $H\cdot\bar{p}\subseteq E_k$. Lemma~\ref{l:ergodic-Hru-prop} and the fact that the action $\mbF_k\actson N_0$ contains a copy of every ergodic action on a finite algebra $A\in\MBAQ$, readily imply that $H\cdot\bar{p}$ is dense in $E_k$. For comeagerness it is then enough to see that for every finite quotient $\pi\colon \wFk\to \Gamma$ we have
$$\bar{p}H^k_{A_\pi}\cap E_k\subseteq\ov{H_{A_\pi}\cdot\bar{p}}.$$
To see this, in turn, and again by Lemma~\ref{l:ergodic-Hru-prop}, it suffices to show that for every finite subalgebra $C\subseteq N_0$ extending $A_\pi$ and every $\bar{f}\in E_k$ with $\bar{f}|_{A_\pi}=\bar{p}|_{A_\pi}$ that restricts to a tuple of automorphisms of $C$, there is $h\in H_{A_\pi}$ such that $\bar{f}|_C=(h\cdot\bar{p})|_C$. For such $C$ and $\bar{f}$, let $\Gamma'$ be the group generated by $\bar{f}$ within $\Aut(C)$ and $\pi'\colon\wFk\to\Gamma'$ be the induced quotient. Let also $P$ be the set of atoms of $C$. Choose $c_0\in P$ contained in $a_{\pi,1_\Gamma}$, and for any other $c\in P$ set
$$S_c\coloneqq \{\gamma'\in\Gamma':\gamma' c_0=c\}\text{ and }\sigma(c)\coloneqq a_{\pi',S_c}\in A_{\pi'}.$$
Since $\bar{f}'\in E_k$, the action $\Gamma'\actson P$ is transitive, and thus
$$\mu(c)=\frac{1}{|P|}=\frac{|S_c|}{|\Gamma'|}=\mu(\sigma(c)).$$
As in the proof of Theorem~\ref{thm:Fk-on-N-is-generic}, it follows that $\sigma$ induces an isomorphism $\sigma\colon C\to C'\subseteq N_0$ that fixes $A_\pi$ and conjugates $\bar{f}|_C$ to $\bar{p}|_{C'}$. By ultrahomogeneity, we get $h\in H_{A_\pi}$ with $\bar{f}|_C=(h\cdot\bar{p})|_C$, as desired.
\end{proof}

We can finally combine all the previous results and conclude:

\begin{theorem}\label{thm:metric-gen-profinite}
The pmp system $\mbF_k\actson \wFk$ is metrically generic, and in particular existentially closed.
\end{theorem}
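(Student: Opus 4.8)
The plan is to obtain the statement as a direct instance of Theorem~\ref{thm:good-app-substructures-variant}, applied with $M$ the measure algebra of $(\wFk,\mu)$, with $N=N_0=\Clop(\wFk)$, with $E=E_k$, and with $\bar p\in\Aut(N_0)^k$ the tuple of automorphisms of $N_0$ induced, through duality, by a fixed $k$-tuple of free generators of $\mbF_k$. So I would first record that all the hypotheses of that theorem are in place. The points to check are: (i) $N_0$ is a countable approximating substructure of $M$; (ii) $E_k$ is a Polish subspace of $\Aut(N_0)^k$; (iii) $E_k$ is dense when regarded as a subset of $\Aut(\wFk,\mu)^k$; (iv) the $\epsilon$-fattening of every relatively open subset of $E_k$ is open in $\Aut(\wFk,\mu)^k$; and (v) $\bar p$ is generic in $E_k$.

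For (i), the domain of $N_0$ (the clopen subsets of $\wFk$) is a dense countable subset of the measure algebra of $(\wFk,\mu)$, and each automorphism of $N_0$ is a measure-preserving automorphism of the Boolean algebra of clopens, hence extends uniquely to an automorphism of $M$; thus conditions (1) and (2) of the definition of a countable approximating substructure hold. Point (ii) has already been noted: $E_k$ is closed in $\Aut(N_0)^k$ and so is Polish. Point (iv) is Corollary~\ref{c:eps-fatts-are-open-variant}, and point (v) is Theorem~\ref{thm:Fk-on-N0-is-generic-in-Ek}. The one thing that uses genuine input from outside our development is (iii): here I would invoke Kechris's Theorem~\ref{thm:kechris}, which gives that the diagonal conjugacy class of $\mbF_k\actson\wFk$ is dense in $\Aut(\wFk,\mu)^k$; since every member of that conjugacy class is conjugate to $\bar p$ and $\bar p$ fixes no non-trivial clopen set (because $\wFk$ acts transitively on itself), the whole conjugacy class lies in $E_k$, and therefore $E_k$ is dense in $\Aut(\wFk,\mu)^k$.

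With (i)--(v) in hand, Theorem~\ref{thm:good-app-substructures-variant} yields that the action of $\bar p$ on $M$ is metrically generic, and this action is precisely the pmp system $\mbF_k\actson\wFk$. For the final clause, I would appeal to Theorem~\ref{thm:metric-gen-is-exist-closed}: the theory $\APA$ of atomless probability algebras is model complete and $\aleph_0$-categorical, so any metrically generic $\mbF_k$-action on its separable model is existentially closed. As for the main obstacle, at this stage there is no new one --- the theorem is an assembly of what has been built. The two pieces that were hard and that the argument truly relies on are Kechris's density theorem (needed for (iii)) and the combinatorial core of the section, namely the proof that $\bar p$ is generic in $E_k$ (Theorem~\ref{thm:Fk-on-N0-is-generic-in-Ek}), which in turn rests on the ergodic variant of the Hrushovski property (Lemma~\ref{l:ergodic-Hru-prop}) and on the distance-realization lemma for $N_0$ (Lemma~\ref{l:d_P-equality-for-N}); a minor point to be careful about is that $E_k$ must be tested for density inside $\Aut(\wFk,\mu)^k$, not inside $\Aut(N_0)^k$.
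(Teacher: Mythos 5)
Your proposal follows exactly the paper's route: Theorem \ref{thm:metric-gen-profinite} is obtained by assembling Theorem \ref{thm:good-app-substructures-variant} with $M$ the measure algebra of $(\wFk,\mu)$, $N=N_0$, $E=E_k$, $\bar p$ the tuple of generators of the dual action, using Corollary \ref{c:eps-fatts-are-open-variant} for the fattening condition, Theorem \ref{thm:Fk-on-N0-is-generic-in-Ek} for genericity in $E_k$, Kechris's Theorem \ref{thm:kechris} for density, and Theorem \ref{thm:metric-gen-is-exist-closed} for the final clause; this is precisely how the paper concludes.

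One step of your justification of (iii) is, however, incorrect as stated: it is not true that the whole diagonal $G$-conjugacy class $G\cdot\bar p$ lies in $E_k$. The set $E_k$ was defined as a subset of $\Aut(N_0)^k$, and for an arbitrary $g\in\Aut(\wFk,\mu)$ the conjugate $g\cdot\bar p$ preserves the countable algebra $g(N_0)$ but in general does not map $N_0$ to itself, so it is not even an element of $\Aut(N_0)^k$ (the transitivity remark only shows $\bar p\in E_k$). The conclusion you want is true and needs only a small repair: either note that $H\cdot\bar p\subseteq E_k$ and that $G\cdot\bar p\subseteq\ov{H\cdot\bar p}$ because $H=\Aut(N_0)$ is dense in $G$ (Proposition \ref{p:H-unif-dense-in-G}) and conjugation is continuous, so Kechris's theorem gives density of $E_k$ in $G^k$; or simply invoke Proposition \ref{p:Ek-unif-dense-in-G}, which shows $E_k$ is uniformly (hence topologically) dense in $G^k$. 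With that fix your argument is complete and coincides with the paper's.
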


The two models of \PMPFk* that we have exhibited, an ergodic and a non-ergodic one, are of course non-isomorphic, but as the uniform closures of their conjugacy classes are comeager, we have the following.

\begin{cor}\label{c:profinite-vs-profinite-times-trivial}
The pmp systems $\mbF_k\actson \wFk$ and $\mbF_k\actson \wFk\otimes \mbI$ are approximately isomorphic.
\end{cor}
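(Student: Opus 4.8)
The plan is to derive the statement purely from the two metric genericity results, Theorem~\ref{thm:metric-gen-profinite} and Theorem~\ref{thm:metric-gen-profinite-times-trivial}, by a Baire category argument. First I would fix an atomless Lebesgue space $(X,\mu)$ and set $G=\Aut(X,\mu)$. Since $\wFk$ with its Haar measure and $\wFk\otimes\mbI$ with the product measure are both atomless standard probability spaces, their measure algebras are isomorphic to that of $(X,\mu)$; after fixing such isomorphisms, the two pmp systems become tuples $\bar g_1,\bar g_2\in G^k$, where $\bar g_1$ is the image of a fixed free generating set of $\mbF_k$ under the left-translation action $\mbF_k\actson\wFk$, and $\bar g_2$ is the corresponding tuple for $\mbF_k\actson\wFk\otimes\mbI$. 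Recall that two such tuples $\bar g,\bar h\in G^k$ are approximately isomorphic exactly when $\bar h\in\ov{G\cdot\bar g}^\partial$, i.e.\ when $\bar g$ can be conjugated uniformly arbitrarily close to $\bar h$. Because the uniform metric $\partial$ is bi-invariant, one has $\partial(f\bar g f\inv,f\bar h f\inv)=\partial(\bar g,\bar h)$, from which this relation is seen at once to be symmetric, and a two-step conjugation together with the triangle inequality shows it is transitive.

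Next, by Theorem~\ref{thm:metric-gen-profinite} the system $\mbF_k\actson\wFk$ is metrically generic, which by definition means that $\ov{G\cdot\bar g_1}^\partial$ is comeager in $G^k$; by Theorem~\ref{thm:metric-gen-profinite-times-trivial} the same holds for $\ov{G\cdot\bar g_2}^\partial$. As $G^k$ is Polish, hence a Baire space, the intersection $\ov{G\cdot\bar g_1}^\partial\cap\ov{G\cdot\bar g_2}^\partial$ is comeager and in particular non-empty. Choosing $\bar h$ in this intersection, we obtain that $\bar h$ is approximately isomorphic to both $\bar g_1$ and $\bar g_2$, and therefore $\bar g_1$ and $\bar g_2$ are approximately isomorphic by symmetry and transitivity. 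This is precisely the assertion of the corollary.

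I do not expect any real obstacle here; the only points requiring (minor) care are presenting both systems as acting on a common probability space, so that diagonal conjugation inside $G^k$ and the comparison of the two comeager sets take place in the same ambient Polish space, and recording the elementary fact—used implicitly—that ``lying in the uniform closure of a diagonal conjugacy class'' defines an equivalence relation on $G^k$, which follows from the bi-invariance of $\partial$ and the triangle inequality. All the substantive content is supplied by the two preceding theorems.
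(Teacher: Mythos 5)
Your argument is correct and is essentially the paper's own: the corollary is deduced exactly from the fact that both $\ov{G\cdot\bar g_1}^\partial$ and $\ov{G\cdot\bar g_2}^\partial$ are comeager in $\Aut(X,\mu)^k$, hence intersect, with approximate conjugacy being an equivalence relation by bi-invariance of $\partial$. Your write-up just makes explicit the identifications and the Baire category step that the paper leaves implicit.
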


\begin{question}\label{q:profinite-vs-profinite-times-trivial}
What are explicit $\epsilon$-isomorphisms between these two systems?
\end{question}

\noindent\hrulefill\smallskip

\noindent\textbf{Added in proof:} Goldbring, Seward, and Tucker-Drob have informed us of some results about existentially closed pmp ststems that they have obtained \cite{GSTD}.  They  show that if the group $\Gamma$ is a \emph{limit group}, then the continuous theory of pmp $\Gamma$-actions admits a model companion.  An explicit, ergodic-theoretic axiomatization of that model companion is given in the case that $\Gamma$ is a model of the first-order theory of the free group.  They also characterize those $\Gamma$ for which the pmp system given by the profinite completion is existentially closed.
\smallskip

\noindent\hrulefill

\bibliographystyle{amsalpha}
\bibliography{biblio}

\end{document}